\newtheorem{thm}{Theorem}[section]  
\newtheorem{cor}[thm]{Corollary}
\newtheorem{defin}[thm]{Definition} 
\newtheorem{lemma}[thm]{Lemma} 
\newtheorem{prop}[thm]{Proposition}
\newtheorem*{defin*}{Definition}
\newcommand{\aaa}{\mbox{$\alpha$}}
\newcommand{\bbb}{\mbox{$\beta$}}
\newcommand{\eee}{\mbox{$\epsilon$}}
\newcommand{\calB}{\mathcal B}
\newcommand{\calG}{\mathcal G}
\newcommand{\calP}{\mathcal P}
\newcommand{\calH}{\mathcal H}
\newcommand{\bs}{\backslash}
\newcommand{\Rrr}{\mathbb R}
\newcommand{\bdd}{\mbox{$\partial$}}
\newcommand{\inter}{\mbox{${\rm int}$}}
\newcommand{\Diff}{\operatorname{Diff}}
\newcommand{\Dih}{\operatorname{Dih}}
\def\zed{{\mathbb Z}}
\def\real{{\mathbb R}}
\newcommand{\frb}{\mathfrak{b}}
\newcommand{\frc}{\mbox{$\mathfrak{c}$}}
\begin{document}  

\title{Powell's Conjecture on the Goeritz group of $S^3$ is stably true}   

\author{Martin Scharlemann}
\address{\hskip-\parindent
        Martin Scharlemann\\
        Mathematics Department\\
        University of California\\
        Santa Barbara, CA 93106-3080 USA}
\email{mgscharl@math.ucsb.edu}

\thanks{Daryl Cooper provided comments and suggestions that greatly clarified Section \ref{sect:dih}.}

\date{\today}

\begin{abstract}  In 1980 J. Powell \cite{Po} proposed that, for every genus $g$, five specific elements suffice to generate the Goeritz group $\calG_g$ of genus $g$ Heegaard splittings of $S^3$.  Powell's Conjecture remains undecided for $g \geq 4$.  Let $\calP_g \subset \calG_g$ denote the subgroup generated by Powell's elements.  Here we show that, for each genus $g$, the natural function $\calG_g  \to %\calP_{g+1}\bs 
\calG_{g+1}/\calP_{g+1}$ is trivial.  %More succinctly: Powell's Conjecture is stably true. 
% That is, any element of the genus $g+1$ Goeritz group $\calG_{g+1}$ of $S^3$ that lies in $\calG_{g} \subset \calG_{g+1}$ is a product of Powell's proposed generators.
\end{abstract}

\maketitle

%\setcounter{tocdepth}{2}

%\tableofcontents

\section{Introduction} \label{sect:intro}

Following early work of Goeritz \cite{Go}, the {\em genus $g$ Goeritz group} $\calG_g$ of the $3$-sphere can be described as the isotopy classes of orientation-preserving homeomorphisms of the $3$-sphere that leave the standard genus $g$ Heegaard surface $T_g$ invariant.  Goeritz identified a finite set of generators
for $\calG_2$. % ; that work has been recently updated, extended and completed, to give a full picture of the group (see \cite{Sc}, \cite{Ak}, \cite{Cho}).  
In 1980 J. Powell \cite{Po} extended Goeritz' set of generators to a set of five elements that he believed would generate the Goeritz group for any fixed higher-genus splitting, but his proof contained a serious gap.  In \cite{FS1} the Powell  Conjecture (as it is now called) was confirmed for $\calG_3$ and in \cite{Sc1} it is pointed out that one of Powell's proposed generators is redundant, so in fact only 4 of Powell's proposed generators need to be considered.

Powell's view of the Goeritz group, which we will adopt, is framed somewhat differently.  Following Johnson-McCullough \cite{JM} (who extend the notion to arbitrary compact orientable manifolds), consider the space of left cosets $\Diff(S^3)/\Diff(S^3, T_g)$, where $\Diff(S^3, T_g)$ consists of those orientation-preserving diffeomorphisms of $S^3$ that carry $T_g$ to itself.  The fundamental group $\tilde{\calG}_g = \pi_1(\Diff(S^3)/\Diff(S^3, T_g))$
 of this space projects to the genus $g$ Goeritz group $\calG_g$ (with kernel $\pi_1(SO(3)) = \mathbb{Z}_2$ \cite[p.197]{Po}) as follows:  A non-trivial element is represented by an isotopy of $T_g$ in $S^3$ that begins with the identity and ends with a diffeomorphism of $S^3$ that takes $T_g$ to itself; this diffeomorphism of the pair $(S^3, T_g)$ represents an element of the Goeritz group as defined earlier.  The advantage of this point of view
%\footnote{ The $Z_2$ distinction (unimportant for our purposes) 
%is best illustrated by Powell's generator $D_\eta$ in Figure \ref{fig:PowellPic}.  It has order $g$ when viewed as a diffeomorphism $(S^3, T_g) \to (S^3, T_g)$ but is of order $2g$ when viewed as an isotopy of $S^3$ to itself.} 
is that an element of $\tilde{\calG}_g$ can be viewed quite vividly: it is an excursion of $T_g$ in $S^3$ that begins and ends with the standard picture of $T_g \subset S^3$.  Such excursions are how Powell defines his proposed generators in \cite[Figure 4]{Po}.  

\section{Powell's generators re-expressed}

Powell's description of $T_g \subset S^3, g \geq 2$ begins with a round $2$-sphere in $S^3$, to which is then connect-summed  a standard unknotted torus at each of $g$ points in a circumference $\frc$ of the sphere.  These summands $\frb_i, 1 \leq i \leq g$ will be called the standard genus $1$ bubbles; their complement is a $g$-punctured sphere $P_g$. See Figure \ref{fig:Psetup}. For the Heegaard splitting $S^3 = A \cup_{T_g} B$ let $a_i \subset (\frb_i \cap T_g)$ be a circle (unique up to isotopy in the punctured torus $\frb_i \cap T_g$) that bounds a disk $\mu_i$ in the solid torus $A \cap \frb_i$.  Similarly, let $b_i \subset (\frb_i \cap T_g)$ be a circle (unique up to isotopy rel $a_i$ in the punctured torus $\frb_i \cap T_g$)  that bounds a disk $\lambda_i$ in the solid torus $B \cap \frb_i$, chosen so that $|a_i \cap b_i| = 1$.  $a_i$ and $b_i$ will be called respectively the meridian and longitude of the summand $\frb_i$.  Similarly $\mu_i$ and $\lambda_i$ will be called the meridian and longitudinal disks of $\frb_i$.

\begin{figure}[ht!]
\labellist
\small\hair 2pt
\pinlabel  $P_g$ at 140 50
\pinlabel  $\frb_1$ at 310 90
\pinlabel  $\frb_2$ at 260 190
\pinlabel  $\frb_3$ at 140 230
\pinlabel  $\frc$ at 160 200
\pinlabel  $\mu_2$ at 220 220
\pinlabel  $\lambda_2$ at 220 180
\pinlabel  $A$ at 220 125
\pinlabel  $B$ at 260 140
\endlabellist
    \centering
 \includegraphics[scale=0.7]{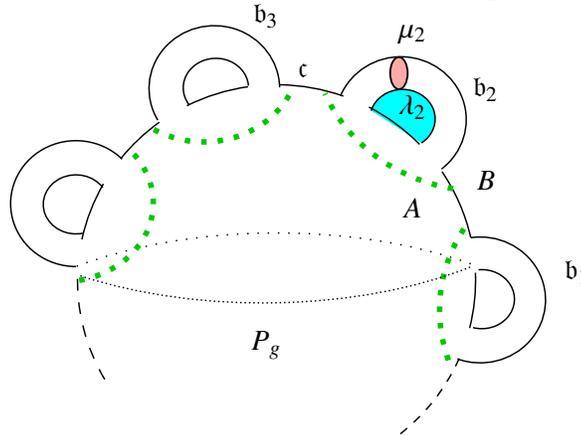}
    \caption{Powell's picture of $T_g$}
    \label{fig:Psetup}
    \end{figure}
    
In terms of this picture, here are Powell's four proposed generators (the fifth being redundant):

\begin{itemize}

\item $D_\omega$ 
is the homeomorphism $(\frb_1, \frb_1 \cap T_g) \to (\frb_1, \frb_1 \cap T_g)$ shown in Figure \ref{fig:flip}.  It preserves both $\mu_1$ and $\lambda_1$ but reverses the orientation of each. We will call this the {\em standard flip} on $\frb_1$.

 \begin{figure}[ht!]
 \labellist
\small\hair 2pt
\pinlabel  $\frb_1$ at 180 50
\pinlabel  $\mu_1$ at 160 110
\pinlabel  $\lambda_1$ at 110 85
\endlabellist
\centering
    \includegraphics[scale=0.6]{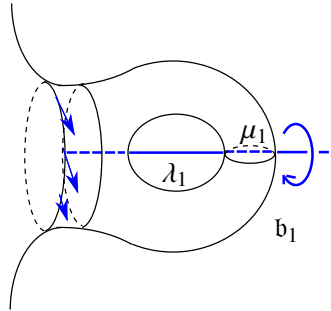}
    \caption{Powell's $D_{\omega}$, the standard flip on $\frb_1$}  \label{fig:flip}
    \end{figure}
    
            \item $D_{\eta}$ is the homeomorphism $(S^3, T_g) \to (S^3, T_g)$ shown in Figure \ref{fig:rotate}: The punctured sphere $P_g$ is rotated by ${2\pi}/{g}$ along the circumference $\frc$ and each standard bubble $\frb_i$ is moved to $\frb_{i+1}$, $i \in \zed/g$, sending each $\mu_i$ to $\mu_{i+1}$ and $\lambda_i$ to $\lambda_{i+1}$.  Fix an orientation for the disks $\mu_1, \lambda_1$ and observe that $(D_{\eta})^{i-1}$ can be used to fix an orientation on each pair $\mu_i, \lambda_i$  that is then preserved by $D_{\eta}$.   
\medskip

 \begin{figure}[ht!]
\centering
    \includegraphics[scale=0.5]{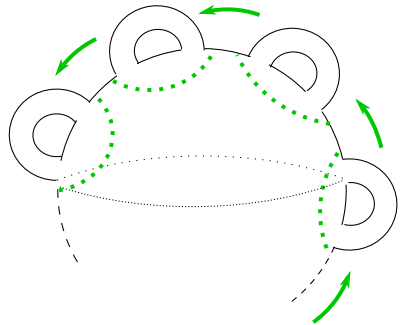}
    \caption{Powell's $D_{\eta}$}  \label{fig:rotate}
    \end{figure}

    \item Let $v \subset P_g$ be the subarc of $\frc$ connecting $\frb_1$ and $\frb_2$. Let $B$ be the reducing ball for the splitting $T_g$ obtained by attaching a $1$-handle regular neighborhood of $v$ to the bubbles $\frb_1, \frb_2$.  $D_{\eta_{12}}$ is the homeomorphism $(B, B \cap T_g) \to (B, B \cap T_g)$ shown in Figure \ref{fig:switch}.  It exchanges the meridian disks $\mu_1$ and $\mu_2$ and the longitudinal disk $\lambda_1$ and $\lambda_2$, preserving orientation of each.   We will call this the {\em standard exchange} of $\frb_1$ and $\frb_2$.  
\medskip

 \begin{figure}[ht!]
  \labellist
\small\hair 2pt
\pinlabel  $\frb_1$ at 310 90
\pinlabel  $\frb_2$ at 270 200
\pinlabel  $v$ at 260 130
\endlabellist
\centering
    \includegraphics[scale=0.5]{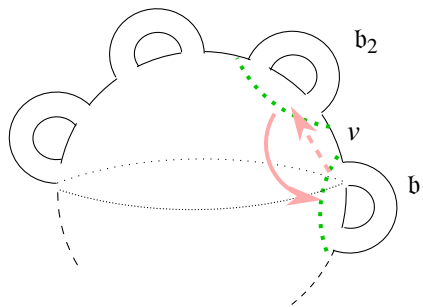}
    \caption{Powell's $D_{\eta_{12}}$, the standard exchange of $\frb_1$ and $\frb_2$}  \label{fig:switch}
    \end{figure}    
    \item %For $i = 1, 2$ let $\mu_i \subset \frb_i$ be a meridian disk for $T_g \cap \frb_i$ and $\ell_i \subset \frb_i$ be a longitudinal disk for $T_g \cap B_i$.  
Let $v \subset T_g$ be an arc connecting $\bdd \mu_1$ to $\bdd \ell_2$, chosen so that inside the bubbles the interior of $v$ is disjoint from the circles $a_1, a_2, b_1, b_2$ and outside the bubbles $v$ is a subarc of $\frc$.  The complex $\mu_1 \cup \ell_2 \cup v$ is called the {\em standard eyeglass} in $T_g$,  see Figure \ref{fig:eyeglass}.  Powell's generator $D_\theta$, illustrated in several panels of \cite[Figure 4]{Po}, makes use of this structure; it can be diagramed more simply as in Figure \ref{fig:eyeglass} and is called the {\em standard eyeglass twist}.% (on $\frb_1, \frb_2$).  
This move will be generalized later, see Figure \ref{fig:eyeglass1}. 

\begin{figure}[ht!]
  \labellist
\small\hair 2pt
\pinlabel  $\frb_1$ at 200 90
\pinlabel  $\frb_2$ at 150 200
\pinlabel  $v$ at 110 100
\endlabellist
    \centering
    \includegraphics[scale=0.7]{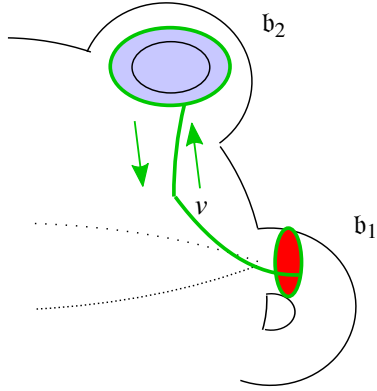}
    \caption{Powell's $D_\theta$, the standard eyeglass twist}  \label{fig:eyeglass}
    \end{figure}
\end{itemize}

As noted earlier, in the exact sequence \[ 1 \to \pi_1(SO(3)) \to \tilde{\calG}_g = \pi_1(\Diff(S^3)/\Diff(S^3, T_g) \to \calG_g \to 1\] Powell's proposed generators actually lie in $\tilde{\calG}_g$, so it makes sense to note the easy:

\begin{prop}  Powell's proposed generators generate $\calG_g$ if and only if they generate $\tilde{\calG}_g$.
\end{prop}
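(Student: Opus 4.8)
The plan is to work directly with the exact sequence $1 \to \pi_1(SO(3)) \to \tilde{\calG}_g \to \calG_g \to 1$ displayed above; write $K \cong \zed/2$ for its kernel, $\calP$ for the subgroup of $\tilde{\calG}_g$ generated by Powell's four elements, and $\overline{\calP}$ for the image of $\calP$ in $\calG_g$. One implication is immediate: if $\calP = \tilde{\calG}_g$ then, projecting, $\overline{\calP} = \calG_g$. So the substance is the converse. Assume $\overline{\calP} = \calG_g$; the goal is $\calP = \tilde{\calG}_g$. Since $\tilde{\calG}_g \to \calG_g$ is onto, for any $x \in \tilde{\calG}_g$ there is $p \in \calP$ with the same image in $\calG_g$, so $xp^{-1} \in K$; hence it is enough to prove $K \subseteq \calP$, and since $|K| = 2$ it is enough to exhibit a single nontrivial element of $K$ as a word in Powell's generators.

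The natural candidate is $D_\omega^2$, the standard flip on $\frb_1$ performed twice. That $D_\omega^2 \in K$ is clear: the standard flip is an involution of the pair $(S^3, T_g)$, supported in $\frb_1$ and reversing the orientations of both $\mu_1$ and $\lambda_1$ (Figure \ref{fig:flip}), so $D_\omega^2$ is sent to the identity of $\calG_g$. That $D_\omega^2$ is the \emph{nontrivial} element of $K$ is where I would spend the effort. I would realize the isotopy $D_\omega$ as a damped rigid rotation by $\pi$ of a round ball $D^3 \supset \frb_1$ about an axis inducing the given involution on $D^3 \cap T_g$, so that $D_\omega^2$ is realized as the damped rigid rotation of $D^3$ by $2\pi$ — the ``spin of the ball $D^3$ in $S^3$.'' This loop in $\Diff(S^3)$ is freely homotopic to the ambient rotation of all of $S^3$ by $2\pi$ about an axis, which represents the generator of $\pi_1(\Diff(S^3)) = \pi_1(SO(4)) = \pi_1(SO(3)) = \zed/2$; under the connecting homomorphism of the fibration $\Diff(S^3,T_g) \to \Diff(S^3) \to \Diff(S^3)/\Diff(S^3,T_g)$ this is, by the exactness asserted above, precisely the nontrivial element of $K$. (Its square, a $4\pi$-rotation, is nullhomotopic — the belt trick — consistently with $|K| = 2$.)

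Granting that, the proof finishes at once: $D_\omega \in \calP$ forces $D_\omega^2 \in \calP$, hence $K = \langle D_\omega^2 \rangle \subseteq \calP$, and then the reduction of the first paragraph gives $x \in \calP$ for every $x \in \tilde{\calG}_g$, i.e.\ $\calP = \tilde{\calG}_g$.

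The step I expect to be the main obstacle is the identification of $D_\omega^2$ with the generator of $K$: one must look closely enough at Powell's description of the standard flip (Figure \ref{fig:flip}) to see it as a damped half-rotation of a ball, and then invoke the standard identification of the ball-spin loop with the generator of $\pi_1(SO(3)) \subset \pi_1(\Diff(S^3))$. Everything else is formal manipulation of the exact sequence.
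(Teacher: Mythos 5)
Your formal reduction is exactly the paper's: both directions hinge on showing that the $\zed/2$ kernel of $\tilde{\calG}_g \to \calG_g$ is hit by a word in Powell's generators. Where you differ is in the choice of witness. The paper uses $(D_{\eta})^g$: since $D_\eta$ rotates $S^3$ by $2\pi/g$ about an axis, its $g$-th power is the global $2\pi$-rotation, which is by construction (Powell's identification of the kernel with $\pi_1(SO(3))$, cited on p.~197 of \cite{Po}) the nontrivial kernel element --- so the whole proof is one sentence. You instead use $D_\omega^2$, the ball-spin loop supported near $\frb_1$. That this lies in the kernel is easy, as you say; but its \emph{nontriviality} requires identifying the localized, damped $2\pi$-rotation of a ball with the global rotation loop inside $\pi_1(\Diff(S^3))$. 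You correctly flag this as the main obstacle, but the justification you sketch leans on $\pi_1(\Diff(S^3)) \cong \pi_1(SO(4))$, which is a consequence of Hatcher's proof of the Smale Conjecture (or else requires an explicit homotopy from the ball-spin to the ambient rotation, which you do not supply). So your route is correct in substance but imports a far heavier input than needed; the paper's choice of $(D_\eta)^g$ makes the identification with the kernel generator tautological. If you keep your argument, you should either cite Hatcher for $\Diff(S^3) \simeq O(4)$ or, better, simply swap your witness $D_\omega^2$ for $(D_\eta)^g$ and delete the hard step entirely.
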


\begin{proof} One direction is obvious; the other follows immediately from the observation that the $2\pi$-rotation $(D_{\eta})^g$ is a Powell move that also represents the non-trivial element of $\pi_1(SO(3))$.
\end{proof}

\begin{defin} %\label{prop:newgen}  
The Powell group $\calP_g$ is the subgroup of the Goeritz group $\calG_g$ that is generated by Powell's 4 proposed generators $D_{\theta}, D_{\omega}, D_{\eta_{12}}, D_{\eta}$.  Any element of $\calP_g$ (that is, any composition of these generators) will be called a {\em Powell move}.  
%  Then these $g+1$ elements of $\calP_g$: \[D_{\theta}, D_{\omega}, \{\phi_i, 1 \leq i \leq g-1\}\] also generate $\calP_g$.
\end{defin}

Here is another easy line of argument:  Both of Powell's moves $D_{\eta}$ and $D_{\eta_{12}}$ can be thought of as induced by a homeomorphism on the $g$-punctured sphere $P_g$, and such a homeomorphism also defines an element of the $g$-stranded braid group $\calB_g$ of the sphere.  Furthermore, $(D_{\eta})^{i-1}$ will conjugate the standard exchange $D_{\eta_{12}}$ to a similar exchange (which we will denote $\phi_i$ and also call standard) between $\frb_i$ and $\frb_{i+1}$, so each standard exchange $\phi_i$ is a Powell move.  Moreover, viewed as an element of the braid group $\calB_g$ each exchange $\phi_i, 1 \leq i \leq g-1$ just described is exactly one of the $g-1$ canonical half-twist generators $\sigma_i$ of $\calB_g$ (see \cite[Chapter 9]{FM}). 
Similarly, $(D_{\eta})^{i-1}$ will conjugate $D_{\omega}$ to a flip in the bubble $\frb_i$ that we will call the standard flip in $\frb_i$ and denote $\omega_i \in \calP_g$.  

\begin{prop} \label{prop:newgen}  %Let $\calP_g$ be the subgroup of the Goeritz group $\calG_g$ that is generated by Powell's 4 proposed generators $D_{\theta}, D_{\omega}, D_{\eta_{12}}, D_{\eta}$.  
The Powell group $\calP_g$ is also generated by these $g+1$ elements of $\calP_g$: \[D_{\theta}, D_{\omega}, \{\phi_i, 1 \leq i \leq g-1\}\]
\end{prop}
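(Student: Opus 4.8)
The plan is to prove that $\calP_g$ equals $H:=\langle D_{\theta}, D_{\omega}, \phi_1,\ldots,\phi_{g-1}\rangle$. The inclusion $H\subseteq\calP_g$ is already established: $D_{\theta}$ and $D_{\omega}$ are two of Powell's four generators, and each $\phi_i=(D_{\eta})^{i-1}D_{\eta_{12}}(D_{\eta})^{-(i-1)}$ was observed above to be a Powell move. For the reverse inclusion, recall $\calP_g=\langle D_{\theta},D_{\omega},D_{\eta_{12}},D_{\eta}\rangle$; since $D_{\theta}$, $D_{\omega}$, and $D_{\eta_{12}}=\phi_1$ all lie in $H$, it suffices to prove $D_{\eta}\in H$.

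To do that, set $\psi:=\phi_1\phi_2\cdots\phi_{g-1}\in H$. A routine bookkeeping of the effect on the bubbles shows that (after possibly replacing $\psi$ by $\psi^{-1}$) the homeomorphism $\psi$ realizes the cyclic permutation $\frb_i\mapsto\frb_{i+1}$, indices mod $g$ --- exactly the permutation realized by $D_{\eta}$. Moreover, each $\phi_j$ interchanges the oriented meridian and longitudinal disks of $\frb_j$ and $\frb_{j+1}$ (with the orientations that were transported by powers of $D_{\eta}$; here one uses $(D_{\eta})^g=1$ in $\calG_g$), so the composite $\psi$ sends each oriented pair $(\mu_i,\lambda_i)$ to $(\mu_{i+1},\lambda_{i+1})$, again just as $D_{\eta}$ does. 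Hence $\xi:=\psi^{-1}D_{\eta}$ fixes every standard bubble $\frb_i$ together with the orientations of $\mu_i$ and $\lambda_i$.

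The remaining task is to show $\xi\in H$. The key claim is that any element of $\calG_g$ that preserves every standard bubble and every oriented $\mu_i,\lambda_i$ is a product of standard flips $\omega_i$: after an isotopy it is supported in a neighborhood of the bubbles, the fixed oriented disks force its restriction to each $\frb_i$ to be isotopic either to the identity or to $\omega_i$, and the remaining ambiguity amounts to Dehn twists along the circles $c_i=\bdd\frb_i\cap T_g$, which are trivial in $\calG_g$ because each $c_i$ bounds a disk on the sphere $\bdd\frb_i$. (Equivalently: $\psi$ and $D_{\eta}$ induce the same mapping class of $P_g$ up to boundary twists, since a $2\pi/g$ rotation of a sphere with $g$ marked points in convex position coincides with the half-twist product $\sigma_1\cdots\sigma_{g-1}$.) Finally, the $\phi_j$ conjugate the standard flips among themselves, so every $\omega_i\in\langle D_{\omega},\phi_1,\ldots,\phi_{g-1}\rangle\subseteq H$; therefore $\xi\in H$ and $D_{\eta}=\psi\,\xi^{-1}\in H$.

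I expect the main obstacle to be the key claim in the last paragraph --- making precise that a Goeritz element fixing the standard bubbles and their meridian/longitude framings is a product of standard flips, and in particular ruling out (or absorbing into flips) any twisting of $P_g$ parallel to the bubble boundaries. One must also take care in the middle paragraph that the orientations on the $\mu_i,\lambda_i$ were defined by transport along powers of $D_{\eta}$, so the comparison of $\psi$ with $D_{\eta}$ genuinely relies on $(D_{\eta})^g=1$ in $\calG_g$. A cleaner route that avoids the surface-mapping-class discussion would be to read off directly from the pictures defining $D_{\eta}$ and $D_{\eta_{12}}$ that $D_{\eta}$ is isotopic, through an excursion of $T_g$ from the standard picture to itself, to $\phi_1\cdots\phi_{g-1}$ up to at most one standard flip.
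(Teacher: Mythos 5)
Your overall route is the paper's: both proofs reduce to showing $D_{\eta}$ differs from $\phi_1\cdots\phi_{g-1}$ only by something already in the subgroup, by observing that the half-twist product realizes the $2\pi/g$ rotation of the $g$ bubbles. The paper does this in one stroke --- $\sigma_1\cdots\sigma_{g-1}$ equals the $2\pi/g$ rotation in the spherical braid group $\calB_g$, so $D_{\eta}=(D_{\omega})^{k}\phi_1\cdots\phi_{g-1}$ for some even $k$ (in fact $k=2$), the power of $D_{\omega}$ absorbing the boundary-twist discrepancy --- and your parenthetical ``(Equivalently: $\psi$ and $D_{\eta}$ induce the same mapping class of $P_g$ up to boundary twists\dots)'' is exactly that argument.

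The problem is that you do not run that argument; instead you route the final step through the much stronger ``key claim'' that \emph{any} element of $\calG_g$ preserving every standard bubble and every oriented $\mu_i,\lambda_i$ is a product of standard flips, and your justification --- ``after an isotopy it is supported in a neighborhood of the bubbles'' --- is precisely the unproven step. Knowing that a Goeritz element preserves the isotopy classes of the $\frb_i$ and of their framing disks does not let you isotope it into a neighborhood of the bubbles; that requires first knowing it acts trivially on $P_g$ up to boundary twists, which is what the claim is supposed to deliver. As a statement about arbitrary elements of $\calG_g$ this is a serious structural assertion, nowhere near the easy observation the proposition needs. It is also unnecessary: $\xi=\psi^{-1}D_{\eta}$ is not an arbitrary Goeritz element but an explicit composition of homeomorphisms supported in a neighborhood of the standard picture, so the comparison lives entirely in the mapping class group of the compact planar surface $P_g$, where the kernel of the forgetful map to $\calB_g$ is generated by Dehn twists about the boundary circles $c_i=\bdd\frb_i\cap T_g$, each of which is $\omega_i^2$ and hence already in your subgroup $H$. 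Two smaller slips in the same paragraph: the standard flip \emph{reverses} the orientations of $\mu_i$ and $\lambda_i$, so an element fixing the oriented disks restricts on each bubble to an \emph{even} power of $\omega_i$, not ``the identity or $\omega_i$''; and your reason for discarding the twists along the $c_i$ (``trivial in $\calG_g$ because $c_i$ bounds a disk on $\bdd\frb_i$'') is not a valid argument and is not needed --- the paper deliberately keeps the factor $(D_{\omega})^2$ rather than claiming it is trivial, since all that matters is that it lies in $\langle D_{\omega}\rangle$.
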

\begin{proof}  In the braid group $\calB_g$ the composition $\sigma_1 \sigma_2 ... \sigma_{g-1}$ visibly has the effect of moving $g$ points evenly distributed in the circumference $\frc \subset S^2$ by a rotation of $2\pi/g$.  (See also \cite[Chapter 9]{FM}.)  It follows that for some even $k$ (easily but unimportantly found to be $k = 2$, see below) $D_{\eta} = (D_{\omega})^k \phi_1 \phi_2 ... \phi_{g-1}$ in $\calP_g$.  Thus $D_{\eta}$ lies in the subgroup of $\calP_g$ generated by $D_{\theta}, D_{\omega}, \{\phi_i, 1 \leq i \leq g-1\}$, as does $D_{\eta_{12}} = \phi_1$.  Thus each of the generators $D_{\theta}, D_{\omega}, D_{\eta_{12}}, D_{\eta}$ lies in the subgroup of $\calP_g$ generated by $D_{\theta}, D_{\omega}, \{\phi_i, 1 \leq i \leq g-1\}$, as required.
\end{proof}

Figure \ref{fig:newgen} demonstrates the equality $D_{\eta} = (D_{\omega})^2 \phi_1 \phi_2 ... \phi_{g-1}$ (composed right to left) in the case $g = 4$.  The red arrows in each $\frb_i$ are meant to show orientation within the bubble: the arrows indicate the normal direction to $a_i$.  In the left panel $\phi_1\phi_2\phi_{3}$ rotates the red arrow in $\frb_4$ through an angle $-\frac{3\pi}{2}$ to the red arrow in $\frb_1$; in the right panel $D_{\eta}$ rotates it $\frac{\pi}{2}$.  

\begin{figure}[ht!]
  \labellist
\small\hair 2pt
\pinlabel  $\frb_1$ at 200 30
\pinlabel  $\frb_2$ at 200 200
\pinlabel  $\frb_3$ at 50 200
\pinlabel  $\frb_4$ at 55 25
\pinlabel  $D_{\eta}$ at 410 120
\pinlabel  $\frc$ at 230 120
\pinlabel  $\phi_1\phi_2\phi_{3}$ at 120 120
\endlabellist
    \centering
    \includegraphics[scale=0.6]{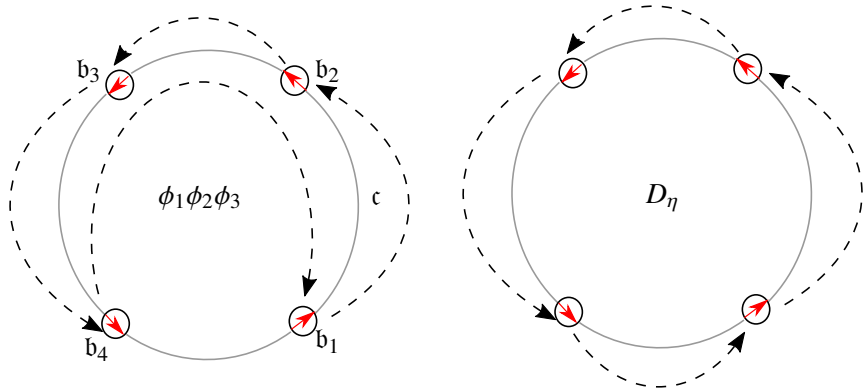}
    \caption{$D_{\eta} = (D_{\omega})^2 \phi_1 \phi_2\phi_{3}$}  \label{fig:newgen}
    \end{figure}
    
    \section{What the Powell group can do}

    We have seen above that the Powell group $\calP_g$ contains any automorphism of $T_g$ obtained by operating on the planar surface $P_g$ by an element of the braid group $\calB_g$. In \cite[Sections 1-4]{FS1} broader types of automorphisms are also shown to lie in $\calP_g$.  For example, \cite[Lemma 1.4]{FS1} says this:
    
   \begin{lemma}   \label{lemma:braid2} 
 Any braid move of a collection of standard bubbles over their complementary surface is a Powell move.   \qed
  \end{lemma}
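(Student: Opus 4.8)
The plan is to break an arbitrary braid move into elementary braid moves, each dragging a single bubble along a simple arc or loop, and to recognise each elementary move as one of the moves shown in Section 2 to lie in $\calP_g$ --- a standard exchange $\phi_i$, a standard flip $\omega_i$, or a standard eyeglass twist $D_\theta$ --- up to conjugation by Powell moves.

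The first step is to invoke the usual presentation of a surface braid group. A braid move of standard bubbles $\frb_{i_1},\dots,\frb_{i_k}$ over their complementary surface $\Sigma$ (the part of $T_g$ lying outside those $k$ punctured tori, a surface with $k$ boundary circles) is classified by an element of the $k$-strand braid group $B_k(\Sigma)$, and $B_k(\Sigma)$ is generated by half-twists $\sigma_j$ interchanging two moving bubbles that are consecutive along a chosen arc of a spine of $\Sigma$, together with point-pushing maps, in which a single moving bubble is dragged once around an embedded loop of $\Sigma$ (see \cite[Chapter 9]{FM}). So it suffices to treat moves of these two kinds.

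For a half-twist: its supporting region can be isotoped into a neighbourhood of a single sub-arc of the circumference $\frc$ --- the remaining bubbles sit on $\frc$ and can be pushed to one side --- so that during the move the two bubbles become cyclically adjacent. The resulting exchange is then conjugate, by a suitable power of $D_\eta$ together with standard exchanges $\phi_i$ (all Powell moves by the discussion preceding Proposition~\ref{prop:newgen}), to the standard exchange $D_{\eta_{12}}$, and is therefore a Powell move. For a point-push along a loop $\gamma$: up to isotopy, and up to loops that bound disks of $\Sigma$ disjoint from the other bubbles --- for which the move is the identity, the isotopy extending across the disk and across the moving bubble --- the loop $\gamma$ splits into a concatenation of elementary loops of two types: a loop once around one other moving bubble, which is $\sigma_j^{\pm 2}$ and hence reduces to the previous case; and, when $\Sigma$ is non-planar, a loop once through the genus-$1$ handle of one other standard bubble, which after bringing the two bubbles to the positions of $\frb_1$ and $\frb_2$ by Powell moves is a conjugate of the standard eyeglass twist $D_\theta$. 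Composing these pieces expresses the given braid move as a product of Powell moves, up to a residual homeomorphism that fixes each moving bubble setwise and preserves its meridian and longitude disks --- that is, up to a power of a standard flip $\omega_i$, which is itself a Powell move --- and this completes the argument.

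I expect the crux to be the ``flattening'' used above: one has to push the supporting region of an elementary braid move onto $\frc$, or onto a standard eyeglass, without creating uncontrolled Dehn twisting along the $a_i$ or $b_i$, and one has to make sure that the conjugating homeomorphisms invoked are genuine Powell moves rather than braid moves of the kind still under analysis. The way to make this rigorous is to induct on a geometric complexity of the move --- for instance the number of intersection points of the trace of the moving bubble with the bubble tori and with $\frc$ --- stripping off one of the generators $\phi_i$, $\omega_i$ or $D_\theta$ at each stage and applying the inductive hypothesis to what is left.
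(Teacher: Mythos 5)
First, a point of comparison: the paper does not prove this lemma at all --- it is imported verbatim from \cite[Lemma 1.4]{FS1}, which is why it is stated with a tombstone and no argument. So your proposal cannot be matched against an internal proof; it can only be judged on its own terms, and on those terms it is a plausible outline with a genuine gap rather than a proof.

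The decomposition of a braid move into half-twists and point-pushes along generators of $\pi_1$ of the complementary surface is fine, and the half-twist case does reduce to the standard exchanges $\phi_i$. The gap is in the point-push case, and it is exactly the one you flag in your final paragraph without resolving. A point-push of a standard bubble through a handle of a stationary standard bubble $\frb_j$ is a push around a loop freely homotopic to $a_j$ or $b_j$ conjugated by an arbitrary arc $\gamma$ in the complementary surface; you assert this is ``a conjugate of the standard eyeglass twist $D_\theta$'' after ``bringing the two bubbles to the positions of $\frb_1$ and $\frb_2$ by Powell moves.'' But the conjugation needed is not merely a permutation of bubble positions (which the $\phi_i$ do supply): the arc $\gamma$ can wind arbitrarily through the rest of the surface, so after repositioning one is left with an eyeglass twist whose lenses are standard but whose bridge is an arbitrary arc crossing $\bdd \frb_j$ once. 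That such a twist is Powell is precisely the content of Lemma \ref{lemma:eyeglass3o} and Corollary \ref{cor:eyeglass3o} of this paper --- itself a nontrivial result cited from \cite[Lemma 3.4]{FS1} --- and your argument neither proves it nor cites anything equivalent. Any homeomorphism that would straighten the bridge into standard position is a priori another bubble/braid move of the kind still under analysis, so the reduction as written is circular; the induction on intersection complexity that you propose is the right instinct for breaking that circle, but it is not carried out, and carrying it out is where the actual work in \cite{FS1} lies. (A smaller point: the residual ``framing'' discrepancy you absorb into powers of $\omega_i$ also requires the Dehn twist around a bubble's boundary circle to be Powell; this is true --- it is a composition of two flips, as the paper notes in its closing remarks --- but it needs to be said.)
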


   Figure \ref{fig:braid2} shows an example in the case of two standard bubbles.
   
 \begin{figure}[ht!]
    \centering
    \includegraphics[scale=0.6]{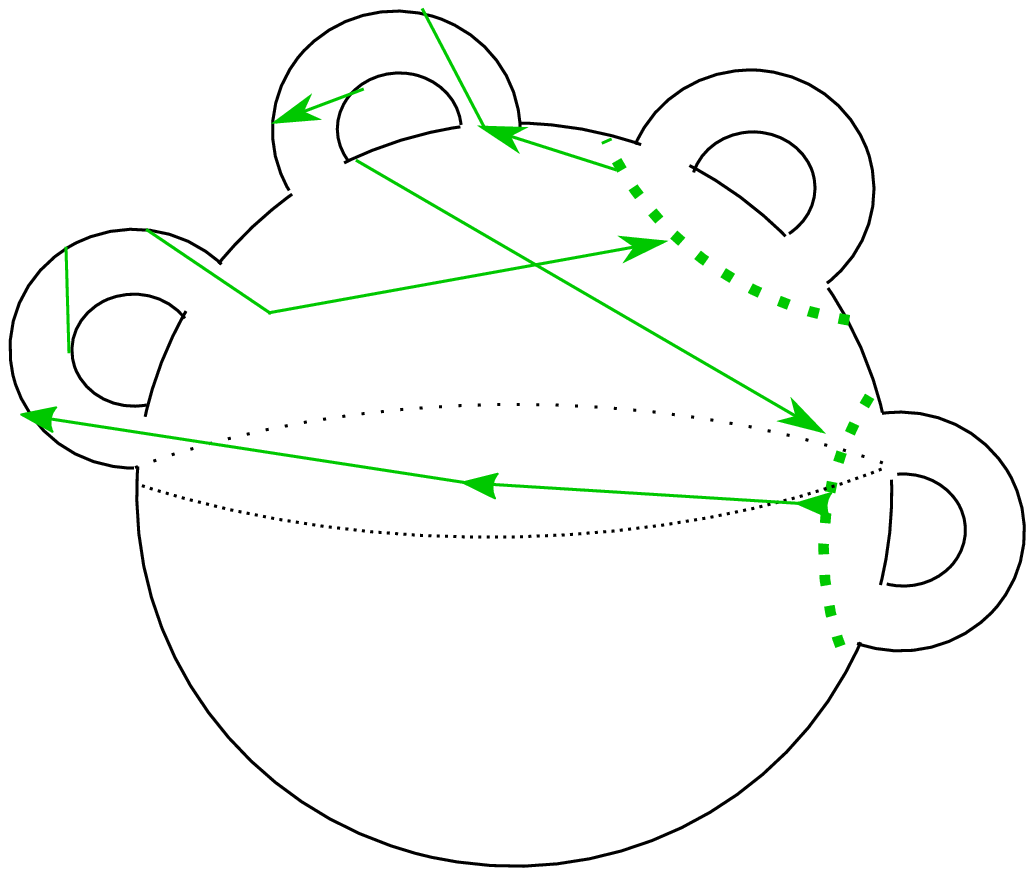}
    \caption{}  \label{fig:braid2}
    \end{figure}
    
    In the case of a single standard bubble $\frb_i$ a braid move is just an isotopy of $\frb_i$ through some path in $T_g - \frb_i$ that returns $\frb_i$ to itself.  This will be called a {\em standard bubble move} of $\frb_i$ and is a Powell move.  More generally, if $\frb$ is an arbitrary bubble, such an isotopy of $\frb_i$ through some path in $T_g - \frb_i$ is called a {\em (generic) bubble move}.  A generic bubble move is not obviously Powell, but see Corollary \ref{cor:eyeglass3o} below.
   \medskip
    
    Here is one consequence:  Let $q: (S^3, T_{g+1}) \to (S^3, T_g)$ be the quotient map that shrinks the standard bubble $\frb_{g+1} \subset S^3$ to a point $\star \in T_g$. Suppose $\tau: (S^3, T_g) \to (S^3, T_g)$ represents an element in $\calG_g$ and let $\alpha$ be an embedded path in $T_g$ from $\tau^{-1}(\star)$ to $\star$.  Let $\alpha_{\tau^{-1}(\star)}^{\star}:(S^3, T_g) \to (S^3, T_g)$ be a homeomorphism whose support lies  in a regular neighborhood of $\alpha$ and pushes $\star$ to $\tau^{-1}(\star)$. (See \cite[Homogeneity Lemma]{Mi}.) Then $\tau\alpha_{\tau^{-1}(\star)}^{\star} (\star) = \star$, so $\tau$ induces $\tau_{\alpha}: (S^3, T_{g+1}) \to (S^3, T_{g+1})$ by replacing a neighborhood of $\star$ with the standard bubble $\frb_{g+1}$. %         
\begin{prop} \label{prop:Ginclude}  For any genus $g \geq 2$, the construction $\tau \to \tau_{\alpha}$ described above determines a function $$\iota^+: \calG_g \to %\calP_{g+1}\bs
    \calG_{g+1}/\calP_{g+1}$$ with the property that $\iota^+(\calP_g)$ is trivial.
\end{prop}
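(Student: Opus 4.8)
The plan is to prove the two assertions of the proposition separately: that the coset $[\tau_\alpha]\in\calG_{g+1}/\calP_{g+1}$ is independent of every choice made in the construction, so that $\iota^+$ is a well-defined function, and that this coset is trivial whenever $\tau\in\calP_g$. That $\tau_\alpha$ represents an element of $\calG_{g+1}$ at all needs only the remark that, by construction, it preserves $T_{g+1}$ and is orientation-preserving on $S^3$.

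For the first assertion I would track the choices one at a time. By the Homogeneity Lemma the point-push $\alpha_{\tau^{-1}(\star)}^{\star}$ is determined up to isotopy by $\alpha$, and two choices of $\alpha$ differ by a loop $\gamma\subset T_g$ based at $\star$, changing the point-push (up to isotopy) by the point-push along $\gamma$. After $\frb_{g+1}$ is reinstalled, this becomes an isotopy of $\frb_{g+1}$ through $T_{g+1}$ returning it to itself, i.e.\ a bubble move: a standard bubble move, hence a Powell move, when $\gamma$ may be taken embedded and disjoint from $\frb_1,\dots,\frb_g$, and a generic bubble move --- a Powell move by Corollary \ref{cor:eyeglass3o} --- in general. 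The only remaining freedom is in how the ball occupied by $\frb_{g+1}$ is matched with the standard bubble model, equivalently the framing of $T_g$ at $\star$; I would argue that changing it alters $\tau_\alpha$ by post-composition with a product of the standard flip $\omega_{g+1}$ (a Powell move) and a $2\pi$-rotation of $\frb_{g+1}$ (a bubble move, hence Powell, just as $(D_\eta)^g$ is). All of these alter $\tau_\alpha$ only by a Powell move in $\calP_{g+1}$, so $[\tau_\alpha]$ is unchanged; and replacing $\tau$ by an isotopic representative through diffeomorphisms preserving $T_g$ drags $\star$ along a path, which after a compensating change of $\alpha$ produces an isotopy of $\tau_\alpha$ and reduces to the path ambiguity already handled. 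Hence $\iota^+$ is a well-defined function.

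For the second assertion I would use the generating set of Proposition \ref{prop:newgen}: every element of $\calP_g$ has a representative that is a word in $D_\theta$, $D_\omega$, $\phi_1,\dots,\phi_{g-1}$. Now $D_\omega$ is supported inside $\frb_1$, each $\phi_i$ ($1\le i\le g-1$) inside a reducing ball about $\frb_i\cup\frb_{i+1}$ together with the subarc of $\frc$ between them, and $D_\theta$ in a neighbourhood of the standard eyeglass on $\frb_1$ and $\frb_2$; thus all of these generators are supported away from the subarc of $\frc$ from $\frb_g$ to $\frb_1$. Since in Powell's picture $\frb_1,\dots,\frb_{g+1}$ occur in this cyclic order around $\frc$, the point $\star=q(\frb_{g+1})$ lies on that subarc, so every such generator, and hence the whole word $\tau$, is the identity on a fixed neighbourhood of $\star$; in particular $\tau^{-1}(\star)=\star$, and taking $\alpha$ constant (so that $\alpha_{\tau^{-1}(\star)}^{\star}$ is the identity) makes $\tau_\alpha$ simply $\tau$ extended by the identity over the reinstalled standard bubble $\frb_{g+1}$. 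Extension by the identity over $\frb_{g+1}$ respects composition and carries $D_\theta$, $D_\omega$, $\phi_i$ ($1\le i\le g-1$) to the identically named Powell moves of $\calG_{g+1}$, so $\tau_\alpha$ is a product of Powell moves and lies in $\calP_{g+1}$; by the first assertion, $\iota^+(\tau)$ is then the trivial coset for every $\tau\in\calP_g$.

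The step I expect to be the main obstacle is the well-definedness, and within it the precise determination of the group of ambiguities in reinstalling the bubble: one must be sure that the framing/local-model freedom contributes nothing beyond flips and $2\pi$-rotations of $\frb_{g+1}$, and that the path freedom contributes only bubble moves --- all of which $\calP_{g+1}$ absorbs, the last point forcing an appeal to Corollary \ref{cor:eyeglass3o} precisely when the connecting loop $\gamma$ cannot be kept embedded and disjoint from $\frb_1,\dots,\frb_g$.
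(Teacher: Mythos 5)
Your proof is correct and follows essentially the same route as the paper: well-definedness reduces to the observation that two choices of $\alpha$ change $\tau_\alpha$ by a bubble move of the standard bubble $\frb_{g+1}$, and triviality on $\calP_g$ is checked on the generators of Proposition \ref{prop:newgen}, which (being supported away from $\star$) extend over the reinstalled $\frb_{g+1}$ to Powell moves of $\calG_{g+1}$. One correction: since the bubble being moved is the \emph{standard} bubble $\frb_{g+1}$, its move along \emph{any} loop $\gamma$ is a standard bubble move and is Powell directly by Lemma \ref{lemma:braid2} --- the dichotomy you draw is not the paper's (``generic'' refers to the bubble, not to the path), and Corollary \ref{cor:eyeglass3o} would in any case only cover loops of the special form $\overline{\gamma}a_i\gamma$, so it could not carry the ``general'' case you assign to it; fortunately you never need it here. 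Your explicit treatment of the framing ambiguity at $\star$ (absorbed by the standard flip and the $2\pi$-rotation) is a detail the paper leaves implicit.
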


\begin{proof}   
First observe that if $\tau_1:(S^3, T_g) \to (S^3, T_g)$ represents the same element in $\calG_g$ as $\tau = \tau_0$ then there is an isotopy $\tau_t, 0 \leq 1$ between them.   Then $\alpha_t = t(\alpha)$ is a path from $\tau_t^{-1}(\star)$ to $\star$ so $(\tau_t)_{\alpha_t}: (S^3, T_{g+1}) \to (S^3, T_{g+1})$ is an isotopy between $(\tau_0)_{\alpha_0}$ and $(\tau_1)_{\alpha_1}$.  Thus they represent the same element of $\calG_{g+1}$. 

Next observe that if $\beta$ is a different path in $T_g$ from $\tau^{-1}(\star)$ to $\star$ then $\tau_{\alpha}$ and $\tau_{\beta}$ differ by a standard bubble move on $\frb_{g+1}$ along the path $\alpha \overline{\beta}$, and by Lemma \ref{lemma:braid2} such a bubble move is in $\calP_{g+1}$.  Thus $\tau_{\alpha}$ and $\tau_{\beta}$ represent the same coset in $\calG_{g+1}/\calP_{g+1}$.
We conclude that the function $\iota^+$ is well-defined.  

To show that $\iota^+(\calP_g)$ lies in $\calP_{g+1}$ consider the generators of $\calP_g$ described in Proposition \ref{prop:newgen}.  Each involves at most two standard bubbles, so the same will be true once $\iota^+$ carries them to $\calG_{g+1}$.  Following Lemma \ref{lemma:braid2} arbitrary exchanges of standard bubbles are Powell moves in $\calG_{g+1}$, so up to Powell moves, we can view the generators of $\iota^+(\calP_g)$ as each involving only the standard bubbles $\frb_1$ and $\frb_2$, where each is a standard generator of $\calP_{g+1}$.  
 \end{proof}

Another useful lemma requires the following generalization of the standard eyeglass move $D_{\theta}$, originally \cite[Definition 2.1]{FS1} :

\begin{defin} \label{defin:eyeglass}
For $S^3 = A \cup_T B$ a Heegaard splitting of $S^3$, an {\em eyeglass} $\eta \subset S^3$ is the union of two properly embedded disjoint disks, $\ell_a \subset A, \ell_b \subset B$ (the {\em lenses} of $\eta$) with an embedded arc $v \subset T - \bdd(\ell_a \cup \ell_b)$ (the {\em bridge} of $\eta$) connecting their boundaries.  The 1-complex $\eta \cap T$ (the bridge, together with the boundary of the lenses) is called the {\em frame} of $\eta$.  

The eyeglass $\eta$ defines a natural automorphism $(S^3, T) \to (S^3, T)$,  with support in a ball neighborhood of $\eta$, as illustrated in Figure \ref{fig:eyeglass1}.  This automorphism is called an {\em eyeglass twist} based on $\eta$.  
\end{defin}

Since the support of an eyeglass twist lies entirely in a ball, it is isotopic in $S^3$ to the identity.  Thus any eyeglass twist of the standard splitting $A \cup_{T_g} B$ lies in $\calG_g$.   

 \begin{figure}[ht!]
  \labellist
\small\hair 2pt
\pinlabel  $\ell_a$ at 50 150
\pinlabel  $\ell_b$ at 235 120
\pinlabel  $v$ at 130 120
\endlabellist
    \centering
    \includegraphics[scale=0.55]{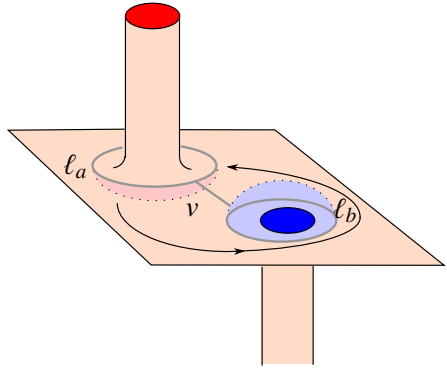}
   \caption{An eyeglass twist} \label{fig:eyeglass1}
    \end{figure}

{\bf Remark:} Figure \ref{fig:eyeglass1} shows the lens disk $\ell_a$ twisting once around the lens disk $\ell_b$, but this is an artifact of the figure; it could equally well have been drawn symmetrically, with $\ell_b$ twisting once around the lens disk $\ell_a$.

\bigskip

Suppose $c \subset T_g$ %in the standard genus $g$ Heegaard splitting $S^3 = A \cup_{T_g} B$, in which we denote the meridians of the standard summands by $\{a_1, ..., a_{g}\}$ and the longitudes by $\{b_{1}, ..., b_g\}$.  In particular $|a_i \cap b_j| =  |\bdd a_i \cap \bdd b_j| = 1, 1 \leq i, j \leq g$.  The curve $c_{g_1} \subset T_g$ 
separates $T_g$ into two components, $T_A$ containing the punctured tori in $g_1 < g$ standard bubbles and $T_B$ containing the punctured tori in the other $g_2 = g - g_1$ standard bubbles.  
%With no loss of generality, suppose these sets of standard bubbles are $\{\frb_1, ..., \frb_{g_1}\}$ and $\{\frb_{g_1+1}, ..., \frb_g\}$ respectively.  Let $P_A$ (resp $P_B$) be the planar surface $T_A - \{a_1, ..., a_{g_1}\}$ (resp $T_B - \{b_{g_1 + 1}, ..., b_g\}$) and let $P$ be the combined planar surface $P_A \cup_{c} P_b$.

\begin{lemma}  \label{lemma:eyeglass3o} Suppose $\eta$ is an eyeglass in $T_g$ whose lenses consist of a disk $a \subset A$ with $\bdd a \subset T_A$ and a disk $b \subset B$ with $\bdd b \subset T_B$.  Suppose further that the bridge $v$ intersects $c$ exactly once.  Then an eyeglass twist along $\eta$ is a Powell move. 
\end{lemma}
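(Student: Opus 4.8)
The plan is to realize $c$ by a reducing sphere, isotope the eyeglass into a controlled position against it, and then reduce the lenses of $\eta$ to standard ones using only moves already known to be Powell. Since $c$ is disjoint from every punctured torus $\frb_i\cap T_g$, it lies in the punctured sphere $P_g$, where it separates the $g$ punctures into groups of sizes $g_1$ and $g_2$; every such curve is isotopic in $P_g$, hence in $T_g$, to the standard genus-$g_1$ separating curve, which bounds a disk $D_A\subset A$ and a disk $D_B\subset B$ (the standard genus-$g$ splitting being the connected sum along that curve of the standard genus-$g_1$ and genus-$g_2$ splittings). Extending the isotopy ambiently, $c$ itself bounds such disks; set $S_c=D_A\cup D_B$, a reducing $2$-sphere splitting $S^3$ into balls $P$, containing $\frb_1,\dots,\frb_{g_1}$, and $Q$, containing the rest, with $(P,T_A)$ and $(Q,T_B)$ standard genus-$g_1$, respectively genus-$g_2$, Heegaard splittings of balls. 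Two kinds of Powell move will be invoked repeatedly: arbitrary braid moves of the bubbles (Powell by Lemma~\ref{lemma:braid2}), and in particular the Dehn twist $T_c$ along $c$ (supported in $P_g$, hence a braid move).

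First I would put $\eta$ in good position with respect to $S_c$. Surgering $a$ along innermost disks of $a\cap D_A$, and $b$ along innermost disks of $b\cap D_B$, is legitimate, since $\partial a$ and $\partial b$ are disjoint from $c=\partial D_A=\partial D_B$ and $a,b$ are disks, so the surgered pieces reassemble (using irreducibility of the handlebodies) into disks isotopic rel boundary to the originals; this is an isotopy of the eyeglass through eyeglasses and so leaves $D_\eta$ unchanged. Afterward $a\subset A\cap P$, $b\subset B\cap Q$, and, because the bridge meets $c$ exactly once, $v$ meets $S_c$ precisely in the point $v\cap c$; write $v=v_A\cup v_B$ with $v_A\subset T_A$ running from $\partial a$ to $v\cap c$ and $v_B\subset T_B$ from $v\cap c$ to $\partial b$. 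Thus $S_c$ cuts $\eta$ into one ``half-eyeglass'' $a\cup v_A$ lying in $P$ and one, $b\cup v_B$, lying in $Q$.

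The main work is then to replace $a$ and $b$ by standard lenses without leaving $\calP_g$. Conjugating the eyeglass by $T_c$ replaces $a$ by $T_c^{\pm1}(a)$ and $v$ by $T_c^{\pm1}(v)$ while fixing $b$, so $D_\eta$ and the twist along the conjugated eyeglass differ by conjugation by the Powell move $T_c$; likewise with the roles of $a$ and $b$ interchanged. I would then induct on the complexity of $a$, say its minimal intersection number with a fixed cut system of the handlebody $A\cap P$. If $a$ is essential but not a meridian disk of a single bubble $\frb_i$, $i\le g_1$, then $\partial a$ can be written in $T_A$ as a band sum of the boundaries of two essential disks $a',a''\subset A\cap P$ of smaller complexity; by a band-sum identity for eyeglass twists (of the type developed in \cite{FS1}), $D_\eta$ is isotopic to the composition of the two eyeglass twists obtained from $\eta$ by replacing $a$ with $a'$, respectively $a''$, with the new bridges routed to still meet $c$ exactly once. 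Running the analogous reduction on $b$ produces a longitude disk of a single bubble $\frb_j$, $j>g_1$. Hence, modulo Powell moves, we may assume $a=\mu_i$ and $b=\lambda_j$ with $v$ meeting $c$ once.

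To conclude, by braid moves (again Lemma~\ref{lemma:braid2}) bring $\frb_j$ adjacent to $\frb_i$, and by further braid moves and standard bubble moves straighten $v$ to a subarc of $\frc$; then $\eta$ is precisely the standard eyeglass of Figure~\ref{fig:eyeglass} and $D_\eta=D_\theta\in\calP_g$. The step I expect to be hardest is the induction in the third paragraph: reducing an arbitrary essential lens disk of the genus-$g_1$ (or genus-$g_2$) handlebody to a single-bubble meridian (or longitude) using moves that are \emph{provably} Powell, since Powell's Conjecture is not available in genus $g_1$. The induction must be confined to braid moves, the twist $T_c$, and eyeglass twists already known to be Powell, and it is exactly the hypothesis that the bridge meets $c$ once — which forces $\eta$ to split across $S_c$ into a half-eyeglass in each ball — that should keep this bookkeeping honest.
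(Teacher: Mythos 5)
The paper gives no internal argument for this lemma---it defers entirely to \cite[Lemma 3.4]{FS1}---so your sketch has to stand on its own, and at present it does not. The load-bearing step is the one you flag yourself: the ``band-sum identity for eyeglass twists.'' As written, the claim that $D_\eta$ is isotopic to the composition of the two eyeglass twists obtained by replacing $a$ with $a'$ and $a''$ cannot simply be asserted: an eyeglass twist is supported in a pair-of-pants neighborhood of its frame, the frames of the two summand eyeglasses sit in a genuinely different configuration from the frame of $\eta$, and the correct replacement statement (which does exist in \cite{FS1}, and is where the real work of the lemma lives) requires a precise choice of the two new bridges, a check that they are embedded and disjoint from each other and from the remaining lens boundaries, and---crucially for your induction to close---a proof that each still meets $c$ exactly once. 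None of this is carried out. Moreover the induction does not terminate where you say it does: a disk in the genus-$g_1$ handlebody $A\cap P$ whose boundary is disjoint from the cut system $\{\mu_1,\dots,\mu_{g_1}\}$ need not be the meridian of a single bubble; it can be a band sum of parallel copies of several $\mu_i$ along bands in the complementary planar surface, so a secondary complexity and further decomposition are needed before you may assume $a=\mu_i$.

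The final paragraph has a gap of the same kind. Once $a=\mu_i$ and $b=\lambda_j$, the bridge $v$ is still an arbitrary embedded arc from $\bdd\mu_i$ to $\bdd\lambda_j$ meeting $c$ once; it may wind around $b_i$ inside $\frb_i$, around $a_j$ inside $\frb_j$, or around other handles of $T_A$ and $T_B$ before crossing $c$. Braid moves of standard bubbles (Lemma \ref{lemma:braid2}) act through the planar surface $P_g$ and cannot undo such winding, and the tool that would undo it---moving a bubble around $a_i$ or $b_i$, Corollary \ref{cor:eyeglass3o}---is deduced in this paper \emph{from} Lemma \ref{lemma:eyeglass3o}, so invoking it here would be circular. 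You would need an independent argument that the Powell moves already available (braid moves, the twist $T_c$, standard flips) act transitively on isotopy classes of bridges meeting $c$ once, or a different normalization of $v$; this is precisely the content that \cite{FS1} supplies and that the sketch leaves open.
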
 

\begin{proof} See \cite[Lemma 3.4]{FS1}.
\end{proof}

\begin{cor}  \label{cor:eyeglass3o}  Suppose $\frb$ is a generic bubble disjoint from the standard meridian $\mu_i$ and standard longitude $\lambda_i$ of a standard bubble $\frb_i$, and $\gamma$ is an embedded arc in $T_g - \frb$ from $\bdd \frb$ to $a_i$ (resp. $b_i$) whose interior is disjoint from both $a_i$ and $b_i$.   Then a bubble move  of $\frb$ around the path $\overline{\gamma} a_i\gamma$ (resp $\overline{\gamma} b_i \gamma$) is a Powell move.
\end{cor}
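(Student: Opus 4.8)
The plan is to present the stated bubble move as an eyeglass twist along a carefully chosen eyeglass $\eta$, and then to invoke Lemma~\ref{lemma:eyeglass3o}. I will carry out the case of the path $\overline{\gamma}a_i\gamma$; the case of $\overline{\gamma}b_i\gamma$ follows from the identical argument after interchanging the roles of $A$ and $B$ (so of $\mu_i,a_i$ and of $\lambda_i,b_i$).

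First I would build the eyeglass. Because $\frb$ is disjoint from the (closed) disks $\mu_i$ and $\lambda_i$, it is disjoint from $a_i=\bdd\mu_i$ and from $b_i=\bdd\lambda_i$. Since the genus-$1$ summand inside the ball $\frb$ is standard, $B\cap\frb$ is a solid torus on whose boundary the circle $d:=\bdd\frb\cap T_g$ is inessential (it bounds the disk $\bdd\frb\cap B$ there), so $d$ bounds a disk $L$ in $B\cap\frb$; thus $L\subset B$ is a properly embedded disk with $\bdd L=d$ and $L\cap\mu_i=\emptyset$. Pushing the interior of $\gamma$ off $d$, the triple $\eta=(\mu_i,L,\gamma)$ is an eyeglass in $T_g$: a lens $\mu_i\subset A$, a lens $L\subset B$, and a bridge $\gamma$ joining $a_i=\bdd\mu_i$ to $d=\bdd L$, with interior disjoint from $\bdd\mu_i\cup\bdd L$.

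The crucial step is to show that the eyeglass twist along $\eta$ and the bubble move of $\frb$ around $\overline{\gamma}a_i\gamma$ represent the same element of $\calG_g$. By the Remark following Figure~\ref{fig:eyeglass1}, the twist along $\eta$ may be drawn so as to carry a ball neighborhood $N(L)$ of the lens $L$ once along the bridge $\gamma$, once around the lens $\mu_i$, and back --- that is, it pushes $N(L)$ around the loop $\overline{\gamma}a_i\gamma$. On the other hand, a bubble move of $\frb$ around a loop based on $\bdd\frb$ is supported in a solid-torus neighborhood $W$ of the union of $\frb$ with the loop, and up to an isotopy of $(S^3,T_g)$ fixing the complement of $W$ it depends only on the framed loop together with the boundary sphere $\bdd\frb$ (the pair $(\frb,\frb\cap T_g)$ being the standard genus-$1$ summand filling $(\bdd\frb,d)$). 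Choosing $N(L)$ to be a collar ball of the hemisphere $\bdd\frb\cap B$ pushed slightly into $\frb$, one identifies the push of $N(L)$ around $\overline{\gamma}a_i\gamma$ with the bubble move of $\frb$ around $\overline{\gamma}a_i\gamma$, as desired.

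It then remains to check the hypotheses of Lemma~\ref{lemma:eyeglass3o} for $\eta$. Using that $\frb$ is disjoint from $\mu_i\cup\lambda_i$, I would isotope a separating simple closed curve $c\subset T_g$ so that it is disjoint from the punctured tori of all $g$ standard bubbles and from $d$, with $\frb_i\cap T_g$ (hence $a_i=\bdd\mu_i$) on one side $T_A$ of $c$ and with $d=\bdd L$ together with the remaining $g-1$ standard bubbles on the other side $T_B$; the endpoints of $\gamma$ then lie on opposite sides of $c$, and an isotopy removing excess intersections of the embedded arc $\gamma$ with $c$ achieves $|\gamma\cap c|=1$. Lemma~\ref{lemma:eyeglass3o} now shows the eyeglass twist along $\eta$ is a Powell move, hence so is the bubble move of $\frb$ around $\overline{\gamma}a_i\gamma$. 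I expect the main obstacle to be the identification in the third paragraph: confirming that the bubble move of the genus-$1$ bubble $\frb$ equals the eyeglass twist along $\eta$ on the nose, and not that twist post-composed with a residual flip or bubble spin, given that the support of an eyeglass twist is a ball whereas $\frb$ carries genus. A secondary difficulty is routing the separating curve $c$ when $\frb$, being only generic, meets standard bubbles other than $\frb_i$; this is manageable because $c$ need only separate the standard bubbles from one another and cross $\gamma$ once, and the disjointness of $\frb$ from $\mu_i\cup\lambda_i$ provides the necessary room near $\frb_i$.
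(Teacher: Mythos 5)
Your proposal is correct and takes essentially the same approach as the paper: the paper likewise realizes the bubble move as an eyeglass twist with lenses $\mu_i \subset A$ and a $B$-disk bounded by $\bdd \frb \cap T_g$ (it simply uses the hemisphere $\bdd \frb \cap B$ rather than your pushed-in copy $L$), with bridge $\gamma$, and applies Lemma \ref{lemma:eyeglass3o} with $c = \bdd N(a_i \cup b_i)$. The identification of the bubble move with this eyeglass twist, which you flag as the main obstacle, is asserted in the paper without further argument.
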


\begin{proof}  Let $c \subset T_g$ be the boundary of a regular neighborhood of $a_i \cup b_i$, separating the standard bubble $\frb_i$ from the rest of $T$.  Then the arc $\gamma$ intersects $c$ in one point and the bubble move around $\overline{\gamma} a_i\gamma$, say, can be described as an eyeglass move along an eyeglass with one lens $\mu_i \subset A$, the other lens the disk $\bdd \frb \cap B$ and the bridge the arc $\gamma$. The result then follows from Lemma \ref{lemma:eyeglass3o}.  
\end{proof}

\section{Stabilization and topological conjugacy}

\begin{defin} \label{defin:generic} Orientation preserving homeomorphisms $h_1, h_2: (S^3, T_g) \to (S^3, T_g)$ are {\em topologically conjugate} if there is an orientation preserving homeomorphism $g:(S^3, T_g) \to (S^3, T_g) \in \calG_g$ so that $g^{-1}h_1g = h_2$.  A homeomorphism topologically conjugate to a standard flip is called a {\em generic flip}; a homeomorphism topologically conjugate to a standard exchange is called a {\em generic exchange}.  
\end{defin}

For example, suppose $\frb$ is {\em any} genus one bubble for $T_g$, so $T \cap \frb$ is a once-punctured unknotted torus whose meridian bounds a properly embedded disk $x \in A$ and longitude bounds a properly embedded disk $y \in B$.  Let $h:(S^3, T_g) \to (S^3, T_g) \in \calG_g$ be the homeomorphism which is the identity outside $\frb$ but looks like Figure \ref{fig:flip} inside $\frb$, with $\mu_1$ replaced by $x$ and $\lambda_1$ by $y$.  Then it follows from \cite{Wa} that $h$ is a generic flip, and from the definition that any generic flip can be described in this way.

Similarly, suppose $\frb$ and $\frb'$ are any two disjoint genus one bubbles for $T_g$, and $\gamma \subset T$ is a path in their complement between the circles $c = \bdd \frb \cap T$ and $c' = \bdd \frb' \cap T$.  A generic exchange is as shown in Figure \ref{fig:switch}, with $\frb_1, \frb_2, v$ replaced respectively with $\frb, \frb', \gamma$.  The generic exchange can also be depicted as a generalization of one of Goeritz's original generators, as in Figure \ref{fig:genswitch}.  In this depiction the visible genus two splitting is rotated $\pi$ around the vertical red axis, as Goeritz did.  But in our case the blue ball at the base is a genus $g-2$ bubble $\frb''$ and the complement of $\frb''$, the genus two splitting we see, is the tube sum of bubbles $\frb$ and $\frb'$ along the arc $\gamma$.  

\begin{figure}[ht!]
  \labellist
\small\hair 2pt
\pinlabel  $\frb''$ at 155 10
\pinlabel  $x$ at 250 80
\pinlabel  $\frb$ at 260 110
\pinlabel  $y$ at 210 70
\pinlabel  $c$ at 165 70
\pinlabel  $c'$ at 120 70
\pinlabel  $\gamma$ at 150 100
\pinlabel  $x'$ at 20 80
\pinlabel  $\frb'$ at 5 110
\pinlabel  $y'$ at 60 70

\endlabellist
    \centering
    \includegraphics[scale=0.7]{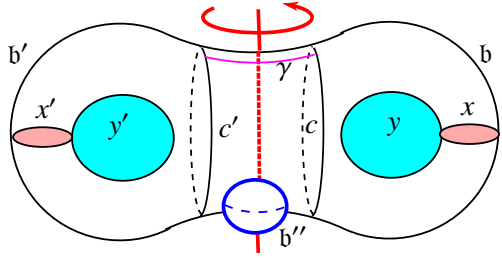}
   \caption{A bubble exchange, via a Goeritz move in $\calG_2$.} \label{fig:genswitch}
    \end{figure}
    
It follows immediately from Lemma \ref{lemma:braid2} that if both $\frb$ and $\frb'$ are standard then the exchange is a Powell move, even though $\gamma$ can be arbitrary.  More surprising and useful is the following extension:

\begin{prop}  \label{prop:semistexchange} If one of the bubbles in a generic exchange is standard, then the exchange is a Powell move.
\end{prop}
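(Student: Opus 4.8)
The plan is to build the generic exchange $\psi$ explicitly as a composition of Powell moves, exploiting the fact that the standard member $\frb_1$ of the exchanged pair carries the standard disks $\mu_1\subset A$ and $\lambda_1\subset B$, which can be used both as a handle to flip (via $\omega_1$) and as lenses of eyeglasses. Write $\frb$ for the generic bubble, with meridian disk $x\subset A$ and longitudinal disk $y\subset B$; let $\gamma$ be the bridge arc of the exchange as in Figure~\ref{fig:switch}, and let $c\subset T_g$ bound a regular neighborhood of $a_1\cup b_1$, so that $c$ separates the punctured torus $\frb_1\cap T_g$ from the rest of $T_g$ and may be taken to meet $\gamma$ exactly once. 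The exchange $\psi$ is supported in a ball $N$, a regular neighborhood of $\frb\cup\gamma\cup\frb_1$.

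First I would normalize the bridge. Exchanges of $\frb$ and $\frb_1$ along two different arcs between $c$ and $\bdd\frb$ differ by a standard bubble move of $\frb_1$, which is a Powell move by Lemma~\ref{lemma:braid2}, together with a bubble move of $\frb$ around a loop of the form $\overline{\gamma'}a_1\gamma'$ or $\overline{\gamma'}b_1\gamma'$, which is a Powell move by Corollary~\ref{cor:eyeglass3o} (its hypotheses hold since $\frb$ is disjoint from $\mu_1$ and $\lambda_1$). So, modulo $\calP_g$, we may take $\gamma$ to be the simplest such arc and view $\psi$ as a half-rotation inside a standard genus-two model in $N$ in which $\frb_1$ appears as one standard handle and $\frb$ as the other, joined by $\gamma$ across $c$.

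Next I would decompose this half-rotation, which carries $\mu_1\leftrightarrow x$ and $\lambda_1\leftrightarrow y$ with orientations preserved. In the way one factors a half-rotation relative to a round handle, I would write $\psi$, up to Powell moves, as a product of: the standard flip $\omega_1$ on $\frb_1$; finitely many eyeglass twists, each having one lens equal to $\mu_1$ or $\lambda_1$, the other lens a properly embedded disk (such as $y$ or $x$) bounded by a curve of $\bdd\frb$ on the far side of $c$, and the bridge crossing $c$ once --- each a Powell move by Lemma~\ref{lemma:eyeglass3o}, applied with this $c$; and a final standard exchange, a Powell move by Lemma~\ref{lemma:braid2}, accounting for the residual transposition once the $\frb$-handle has been dragged clear of $\frb_1$ by the preceding moves. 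Composing $\psi$ with the inverses of all these Powell moves then gives the identity of $\calG_g$, so $\psi\in\calP_g$.

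The main obstacle is this last step: producing the explicit finite sequence of flips, eyeglass twists, and bubble moves whose composition is exactly $\psi$, and verifying that every eyeglass twist in it truly meets the separating-curve hypothesis of Lemma~\ref{lemma:eyeglass3o} --- in particular that the non-$\frb_1$ lens, together with its bridge, can be kept entirely on the $\frb$-side of $c$ with the bridge meeting $c$ a single time. This amounts to a careful frame-by-frame isotopy analysis of the half-rotation in the model $N$; the point that should make it go through is that the dragging eyeglass twists only ever use the two standard disks $\mu_1,\lambda_1$ of $\frb_1$ (never a disk of the generic $\frb$), so that the hypotheses of Lemma~\ref{lemma:eyeglass3o} are unaffected by how complicated $\frb$ may be, together with Waldhausen's theorem \cite{Wa} applied to the now-unknotted, unlinked $\frb$-handle to identify the residual move with a genuine standard exchange rather than another generic one.
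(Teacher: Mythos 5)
Your overall strategy --- factor the exchange into a standard flip, eyeglass twists each having one lens among $\mu_1,\lambda_1$ with bridge crossing $c$ once, and bubble moves --- is exactly the strategy of the paper's proof (carried out as Proposition \ref{prop:ass1}, via Figure \ref{fig:semiexch}), and your preliminary normalization of the bridge using Lemma \ref{lemma:braid2} and Corollary \ref{cor:eyeglass3o} is fine. But what you have written is a plan, not a proof: the entire mathematical content of the proposition is the explicit factorization, and you defer it to ``a careful frame-by-frame isotopy analysis'' that you do not carry out. The paper's proof \emph{is} that analysis: the exchange equals (a standard bubble move of $\frb_{g+1}$ around $\bdd x$) composed with $\tau_1^{-1}\circ\omega_{g+1}\circ\tau_2\circ\tau_1$, where $\tau_1,\tau_2$ are two specific eyeglass twists whose lenses include the standard disks of $\frb_{g+1}$ and whose bridges cross the bubble boundary once (so they are Powell by Lemma \ref{lemma:eyeglass3o}), possibly corrected by one more standard flip to fix orientations. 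Naming the classes of moves that will appear does not establish that some composition of them equals the exchange.

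More seriously, your fallback for closing the argument is circular. You end the factorization with ``a final standard exchange, a Powell move by Lemma \ref{lemma:braid2},'' justified by applying Waldhausen to identify the residual move with a genuine standard exchange. Waldhausen only gives that $\frb$ is topologically conjugate to a standard bubble, i.e.\ that the residual move is a \emph{generic} exchange --- and whether a generic exchange with one standard bubble is Powell is precisely the statement being proved; Lemma \ref{lemma:braid2} applies only when \emph{both} bubbles are standard, and no amount of dragging by eyeglass twists turns the generic bubble $\frb$ into a standard one. The paper's factorization avoids this entirely: no exchange of any kind appears among its factors; the transposition of $(\mu_{g+1},\lambda_{g+1})$ with $(x,y)$ is produced outright by the two eyeglass twists, the flip, and a single bubble move of $\frb_{g+1}$ around the meridian of $\frb$, which is Powell by Lemma \ref{lemma:braid2} precisely because it is the \emph{standard} bubble that travels. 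You need either to exhibit such an explicit sequence or to find a non-circular way to terminate your ``drag $\frb$ clear of $\frb_1$'' step.
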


We defer the proof in order to describe how the Proposition fits in to the main result of this paper.

\begin{prop} \label{prop:main}  For $\frb_{g+1}$ the standard bubble in $(S^3, T_{g+1})$ suppose the following two assumptions are true:
\begin{enumerate}
\item Any generic bubble exchange between $\frb_{g+1}$ and a disjoint genus one bubble in $(S^3, T_{g+1})$ is a Powell move.  In other words Proposition \ref{prop:semistexchange} is true.

\item Any eyeglass twist in $(S^3, T_{g+1})$ in which the eyeglass frame is disjoint from $\frb_{g+1}$ is a Powell move.  
\end{enumerate} 
Then the function $\iota^+: \calG_g \to %\calP_{g+1}\bs
    \calG_{g+1}/\calP_{g+1}$ defined in Proposition \ref{prop:Ginclude} is trivial.
\end{prop}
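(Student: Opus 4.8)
The plan is to express $\tau_\alpha$ as a product of Powell moves of $T_{g+1}$ by writing a suitable lift of $\tau$ as a product of ``elementary'' Goeritz automorphisms of $T_g$ and observing that the blow-up homomorphism $\beta$ (described below) carries each such factor into $\calP_{g+1}$.

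First I would record the geometry of $\tau\mapsto\tau_\alpha$. Since $\tau\,\alpha_{\tau^{-1}(\star)}^{\star}$ is an orientation-preserving homeomorphism of $(S^3,T_g)$ fixing $\star$, after an isotopy supported near $\star$ we may take it to be the identity on a ball neighborhood of $\star$; then $\tau_\alpha$ is the identity on the standard bubble $\frb_{g+1}$ and agrees with $\tau\,\alpha_{\tau^{-1}(\star)}^{\star}$ off $\frb_{g+1}$. Writing $\calG_g^{\star}$ for the Goeritz group of $(S^3,T_g)$ with a marked point $\star$, and $\beta\colon\calG_g^{\star}\to\calG_{g+1}$ for the homomorphism that replaces a neighborhood of $\star$ by $\frb_{g+1}$, we therefore have $\tau_\alpha=\beta(\widetilde\tau)$, where $\widetilde\tau=[\tau\,\alpha_{\tau^{-1}(\star)}^{\star}]\in\calG_g^{\star}$ is a lift of $\tau\in\calG_g$. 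Two lifts of $\tau$ differ by an element of $\ker(\calG_g^{\star}\to\calG_g)$, which is generated by point-pushes of $\star$ around loops in $T_g$; $\beta$ carries such a point-push to a standard bubble move of $\frb_{g+1}$, a Powell move by Lemma \ref{lemma:braid2} --- the same mechanism already used for the path $\alpha$ in the proof of Proposition \ref{prop:Ginclude}. Hence the class of $\tau_\alpha$ in $\calG_{g+1}/\calP_{g+1}$ is independent of the choice of lift, and it suffices to prove that $\beta(\calG_g^{\star})\subseteq\calP_{g+1}$.

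Next I would use a generating set for $\calG_g$ by ``elementary'' automorphisms --- braid moves of standard bubbles, generic flips, generic exchanges, and eyeglass twists --- obtained by the usual innermost-disk reduction of an arbitrary Goeritz homeomorphism against the standard meridian/longitude disk system (as developed in \cite{FS1}). Together with the description of $\ker(\calG_g^{\star}\to\calG_g)$, this presents $\calG_g^{\star}$ as generated by point-pushes of $\star$ together with braid moves, generic flips, generic exchanges, and eyeglass twists that either fix $\star$ or have their bubbles (respectively, frame) disjoint from $\star$ --- any one such elementary move can be isotoped off the single point $\star$. Now apply $\beta$. A point-push of $\star$ becomes a standard bubble move of $\frb_{g+1}$: Powell by Lemma \ref{lemma:braid2}. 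A braid move of standard bubbles becomes a braid move of standard bubbles of $T_{g+1}$: Powell by Lemma \ref{lemma:braid2}. An eyeglass twist whose frame misses $\star$ becomes an eyeglass twist whose frame misses $\frb_{g+1}$: Powell by hypothesis (2). A generic flip in a bubble $\frb$ disjoint from $\star$ becomes a generic flip in a bubble disjoint from $\frb_{g+1}$, which is the conjugate of the standard flip $\omega_{g+1}$ by a generic exchange of $\frb$ with $\frb_{g+1}$, hence Powell by hypothesis (1). Finally, a generic exchange of two bubbles $\frb,\frb'$ disjoint from $\star$ becomes a generic exchange of two bubbles disjoint from $\frb_{g+1}$; using the transposition identity $(\frb\,\frb')=(\frb_{g+1}\,\frb')(\frb_{g+1}\,\frb)(\frb_{g+1}\,\frb')$, this factors as a product of three generic exchanges each involving $\frb_{g+1}$, hence is Powell by hypothesis (1). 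Therefore $\beta(\calG_g^{\star})\subseteq\calP_{g+1}$, and $\iota^+$ is trivial.

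The step I expect to carry the real weight is the generating set for $\calG_g$: the argument is entirely a ``blow up each generator'' argument, so it rests on knowing that every genus-$g$ Goeritz homeomorphism is a product of braid moves, generic flips, generic exchanges, and eyeglass twists; hypotheses (1) and (2) are then precisely the two facts about genus $g+1$ needed to make each blown-up factor Powell. A secondary point needing care is the bookkeeping behind rewriting a generic flip or exchange of bubbles disjoint from $\frb_{g+1}$ in terms of the moves covered by (1) and (2): one must track not only the underlying permutation of bubbles but also the induced identifications of the meridian and longitude disks and of their orientations.
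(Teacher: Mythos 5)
Your proof is correct and follows essentially the same route as the paper: reduce to a generating set of $\calG_g$ consisting of eyeglass twists, generic flips, and generic exchanges, then blow up each generator and dispose of it via hypothesis (2), via conjugating the standard flip $\omega_{g+1}$ by an exchange with $\frb_{g+1}$, and via the three-transposition identity, exactly as in the paper's Claims 1 and 2. The one correction: the load-bearing generating set you invoke is not an innermost-disk consequence of \cite{FS1} but is the central theorem of \cite{Sc2} (that $\calG_g$ is generated by eyeglass twists and topological conjugates of the Powell generators), which is the result the paper cites at precisely this point.
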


\begin{proof}  We begin with two claims, assuming the two                             assumptions are true:
\medskip

{\em Claim 1:}  Suppose $\frb$ is a bubble in $T_{g+1}$ that is disjoint from $\frb_{g+1}$.  Then the generic flip in $\frb$ is a Powell move.  

{\em Proof of claim:}  Recall from the discussion before Proposition \ref{prop:newgen} that  the standard flip $\omega_{g+1}$ of $\frb_{g+1}$ is a Powell move.  Suppose $\frb$ is a generic bubble disjoint from $\frb_{g+1}$.  Choose an embedded path $\beta$ from $\frb_{g+1}$ to $\frb$ and let $\rho$ be the bubble exchange of $\frb_{g+1}$ and $\frb$ along $\beta$, a Powell move by assumption (1).  Then $\rho^{-1} \omega_{g+1} \rho$ is the generic flip of $\frb$ and is the composition of Powell moves. This proves the claim.
\medskip 

{\em Claim 2:}  Suppose $\frb_1, \frb_2$ are disjoint bubbles in $T_{g+1}$ that are also disjoint from $\frb_{g+1}$.  Suppose $\alpha \subset T_2$ is an arc between them that is also disjoint from $\frb_{g+1}$.  Then the generic exchange of $\frb_1$ with $\frb_2$ along $\alpha$
% in $\calG_{g+1}$ in which both bubbles and the embedded arc in $T_{g+1}$ that defines the exchange are disjoint from $\frb_{g+1}$
 is a Powell move.

{\em Proof of claim:}  %Denote the generic genus one bubbles by $\frb_1, \frb_2$ and the embedded arc   $\alpha$.  
Consider the 3-punctured genus $g-2$ surface $T_- = T_{g+1} - {\frb_1 \cup \frb_2 \cup \frb_{g+1}}$.  For $i = 1, 2$ let $\beta_i \subset T_-$ be an embedded arc from $\frb_i$ to $\frb_{g+1}$, chosen so that $\alpha, \beta_1, \beta_2$ are all disjoint and lie in a disk in $T_- $.  

By assumption (1) each bubble exchange $\rho_i$ along $\beta_i$ is a Powell move.  Thus the composition $\rho_1^{-1} \rho_2 \rho_1$ is a Powell move. But  $\rho_1^{-1} \rho_2 \rho_1:T_- \to T_-$ is easily seen to be isotopic to the exchange of the $\frb_i$ along $\alpha$, proving the claim.  (The composition $\rho_2 \rho_1 \rho_2^{-1}$ is also isotopic to this exchange. %of the $\frb_i$ along $\alpha$.  
Indeed the consequent isotopy between  $\rho_1^{-1} \rho_2 \rho_1$ and $\rho_2 \rho_1 \rho_2^{-1}$ acting on a 3-punctured disk in $T_-$ is the source of the standard braid relation $\sigma_i \sigma_{i+1} \sigma_i = \sigma_{i+1} \sigma_i\sigma_{i+1}$.)
\medskip

Following the claims we exploit the central result of \cite{Sc2}:  the Goeritz group $\calG_g$ is generated by eyeglass twists and topological conjugates of generators of $\calP_g$. We consider the generators of $\calP_g$ from Proposition \ref{prop:newgen}.  These consist of Powell's $D_{\theta}$, which is itself an eyeglass twist, Powell's $D_{\omega}$ which is a standard flip, and the collection $\{\phi_i\}$ of standard exchanges.  Any topological conjugate of an eyeglass twist is an eyeglass twist, any topological conjugate of a standard flip is a generic flip and any topological conjugate of a standard exchange $\phi_i$ is a generic exchange.  Assumption (2) above says that $\iota^+$ takes any eyeglass twist in $\calG_g$, (its frame disjoint from the point $\star$ by general position) to $\calP_{g+1} \subset \calG_{g+1}$.  Similarly, Claim 1, using Assumption (1), says that  $\iota^+$ takes any generic flip in $\calG_g$ to $\calP_{g+1}$ and Claim 2, also using Assumption (1), says that $\iota^+$ takes any generic exchange in $\calG_g$ to $\calP_{g+1}$.  Since each generator of  $\calG_g$ is taken to $\calP_{g+1}$, $\iota^+: \calG_g \to \calG_{g+1}/\calP_{g+1}$ is trivial, as required.
\end{proof}

We now show that in fact both assumptions in Proposition \ref{prop:main} are true.
The proofs of both are highly visual.  We first prove Assumption (2) of Proposition \ref{prop:main}.

\begin{prop}  \label{prop:ass2} Suppose $\tau \in \calG_{g+1}$ is an eyeglass twist whose frame $\eta$ is disjoint from the standard bubble $\frb_{g+1}$.  Then $\tau$ is a Powell move.
\end{prop}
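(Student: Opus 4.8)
I would prove Proposition \ref{prop:ass2} by maneuvering the eyeglass $\eta$ into the configuration of Lemma \ref{lemma:eyeglass3o}. Two preliminary remarks make the maneuvering legitimate: the eyeglass twist depends only on the isotopy class of its eyeglass, and if $h$ is any Powell move then the twist along $h(\eta)$ equals $h\tau h^{-1}$, which lies in $\calP_{g+1}$ exactly when $\tau$ does. Hence throughout we are free to replace $\eta$ by its image under any ambient isotopy of $(S^3,T_{g+1})$ and under any Powell move; in particular, by Lemma \ref{lemma:braid2} and Corollary \ref{cor:eyeglass3o}, under any braid move of the standard bubbles, and under a bubble move of some $\frb_i$ around one of its standard meridian or longitude curves.

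The first step is to push all of $\eta$ — not just its frame — off the ball $\frb_{g+1}$. Since $\eta\cap T_{g+1}$ avoids the once-punctured torus $\frb_{g+1}\cap T_{g+1}$, the curve $\bdd\ell_a$ avoids $a_{g+1}=\bdd\mu_{g+1}$, so an innermost-disk surgery of $\ell_a$ along $\mu_{g+1}$, followed by one along the separating disk $\bdd\frb_{g+1}\cap A$, pushes $\ell_a$ off $\frb_{g+1}$; neither surgery disturbs the bridge $v$ or the lens $\ell_b$, which lie outside the handlebody $A$. Treating $\ell_b$ symmetrically, we arrange that $\eta$ lies in the ball $S^3-\inter\frb_{g+1}$ and meets $T_{g+1}$ only in the genus $g$ subsurface $\Sigma=T_{g+1}-\inter(\frb_{g+1}\cap T_{g+1})$, whose boundary $c_{g+1}$ is the standard curve cutting off $\frb_{g+1}$.

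The goal is then to produce a separating curve $c\subset T_{g+1}$ that is disjoint from the punctured tori of all the standard bubbles (a ``bubble-separating'' curve) and from $\eta$, with $\bdd\ell_a$ on one side, $\bdd\ell_b$ on the other, and $v$ crossing $c$ exactly once; Lemma \ref{lemma:eyeglass3o} then finishes the proof. Granting for the moment that the frame has also been isotoped off the punctured tori of $\frb_1,\dots,\frb_g$, this is straightforward. If $\bdd\ell_a$ is inessential in $T_{g+1}$ then $\ell_a$ is $\bdd$-parallel and $\tau$ is readily seen to be trivial, and likewise for $\bdd\ell_b$; otherwise each lens boundary, being essential and disjoint from every standard punctured torus, is itself a bubble-separating curve encircling a nonempty proper set of standard bubbles, and one may take $c$ to be a push-off of $\bdd\ell_a$ — adjoining $\frb_{g+1}$ along a band disjoint from $v$ and $\bdd\ell_b$ if the side of $\bdd\ell_a$ happens to carry no standard bubble. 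Using that $T_{g+1}$ has $g+1\ge 3$ standard bubbles, both bubble counts $g_1,g_2$ are at least $1$ and $g_1<g+1$, and $v$ crosses $c$ once, so Lemma \ref{lemma:eyeglass3o} applies. (When $\bdd\ell_a$ is non-separating the same idea works with a curve $d$ meeting $\bdd\ell_a$ once and an arc joining the pair to $\frb_{g+1}\cap T_{g+1}$, all chosen disjoint from the rest of the frame and from the other punctured tori, whose regular-neighborhood boundary is the desired $c$.)

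The main obstacle is the normalization invoked above: arranging, by Powell moves and isotopy, that the frame $\eta\cap T_{g+1}$ is disjoint from the punctured tori $\frb_1\cap T_{g+1},\dots,\frb_g\cap T_{g+1}$, so that the curve $c$ really is bubble-separating and Lemma \ref{lemma:eyeglass3o} applies. I would attack this by induction on a complexity measuring how far $\eta$ is from such a position — for instance the total number of points in which the frame and the lens boundaries meet the standard meridian and longitude disks $\mu_1,\lambda_1,\dots,\mu_g,\lambda_g$ — showing, by an explicit picture-driven analysis, that whenever this complexity is positive some Powell move (a bubble move of an $\frb_i$ around a standard meridian or longitude, as in Corollary \ref{cor:eyeglass3o}, or a braid move of the standard bubbles) strictly decreases it while keeping the two lenses disjoint and the bridge an embedded arc. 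Here the hypothesis that the frame avoids $\frb_{g+1}$ is exactly what is exploited: it leaves one standard bubble entirely free to be slid and exchanged, providing the room needed to carry out the reduction. Verifying the availability and effect of these complexity-reducing moves is the bulk of the argument.
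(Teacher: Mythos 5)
Your strategy---normalize the frame so that it misses every standard punctured torus, then read off a bubble-separating curve $c$ from a push-off of a lens boundary---founders at exactly the step you flag as ``the bulk of the argument.'' The lens boundaries $\bdd \ell_a$ and $\bdd \ell_b$ are boundaries of compressing disks in the handlebodies and in general meet the punctured tori $\frb_1\cap T_{g+1},\dots,\frb_g\cap T_{g+1}$ essentially (for instance $\bdd\ell_a$ could be the boundary of a band sum of $\mu_1$ and $\mu_2$, which cannot be isotoped off either punctured torus), so the reduction must be done entirely by Powell moves acting on an arbitrary disk boundary relative to the standard disk system. That is essentially the open content of Powell's Conjecture: being able to carry an arbitrary frame (disjoint from one bubble) into the planar part $P_{g+1}$ modulo $\calP_{g+1}$ would, by your own concluding paragraph plus Lemma \ref{lemma:eyeglass3o} and the generation result of \cite{Sc2}, come very close to settling the conjecture itself. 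The free bubble $\frb_{g+1}$ gives no visible leverage on $|\text{frame}\cap(\mu_1\cup\lambda_1\cup\cdots\cup\mu_g\cup\lambda_g)|$, and the moves you list (braid moves of standard bubbles, bubble moves around $a_i$ or $b_i$) have no demonstrated complexity-decreasing effect on that count. So the induction is not merely unfinished; there is no reason to believe it can be carried out, and the proposal does not constitute a proof.

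The paper sidesteps normalization entirely and never simplifies $\ell_a$ or $\ell_b$. It runs an auxiliary arc $u$ from $\ell_a$ into $\frb_{g+1}$, crossing $c=\bdd\frb_{g+1}$ once, and factors $\tau$ into eyeglass twists each of which has one lens equal to $\mu_{g+1}$ or $\lambda_{g+1}$ and a bridge meeting $c$ exactly once---precisely the situation of Lemma \ref{lemma:eyeglass3o}, which tolerates arbitrarily complicated lens boundaries on either side of the separating curve. Concretely: the twist $\tau'$ on $\{\mu_{g+1},\ell_b,u\cup v\}$ is Powell; the composition $\tau\tau'$ is the twist $\tau_+$ on $\{\ell'_a,\ell_b,v\}$, where $\ell'_a$ is the band sum of $\ell_a$ with $\mu_{g+1}$ along $u$; and $\tau_+$ is Powell because conjugating by the (Powell) twist on $\{\ell_a,\lambda_{g+1},u\}$ carries its eyeglass to one with lens $\mu_{g+1}$ and bridge crossing $c$ once. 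That composition trick, exploiting the free bubble to absorb a lens rather than to clean up the frame, is the idea your proposal is missing.
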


\begin{proof} Let $\{\ell_a, \ell_b, v\}$ be the frame of $\eta$.  Let $u$ be an arc such that
\begin{itemize}
\item the ends of $u$ lie on $a_{g+1}$ near the point $a_{g+1} \cap b_{g+1}$ and on $\ell_a$ near the point $\ell_a \cap v$,
\item $u$ crosses the circle $c=\bdd \frb_{g+1}$ once and
\item $u$ is otherwise disjoint from $a_{g+1} \cup b_{g+1}$, and $\eta$.
\end{itemize}
  See Figure \ref{fig:stabilized}.
 
 \begin{figure}[ht!]
\labellist
\small\hair 2pt
\pinlabel  $\ell_a$ at 110 145
\pinlabel  $\ell_b$ at 240 100
\pinlabel  $v$ at 185 145
\pinlabel  $u$ at 265 170
\pinlabel  $a_{g+1}$ at 300 190
\pinlabel  $b_{g+1}$ at 400 230
\pinlabel  $c$ at 365 150
\endlabellist
    \centering
    \includegraphics[scale=0.45]{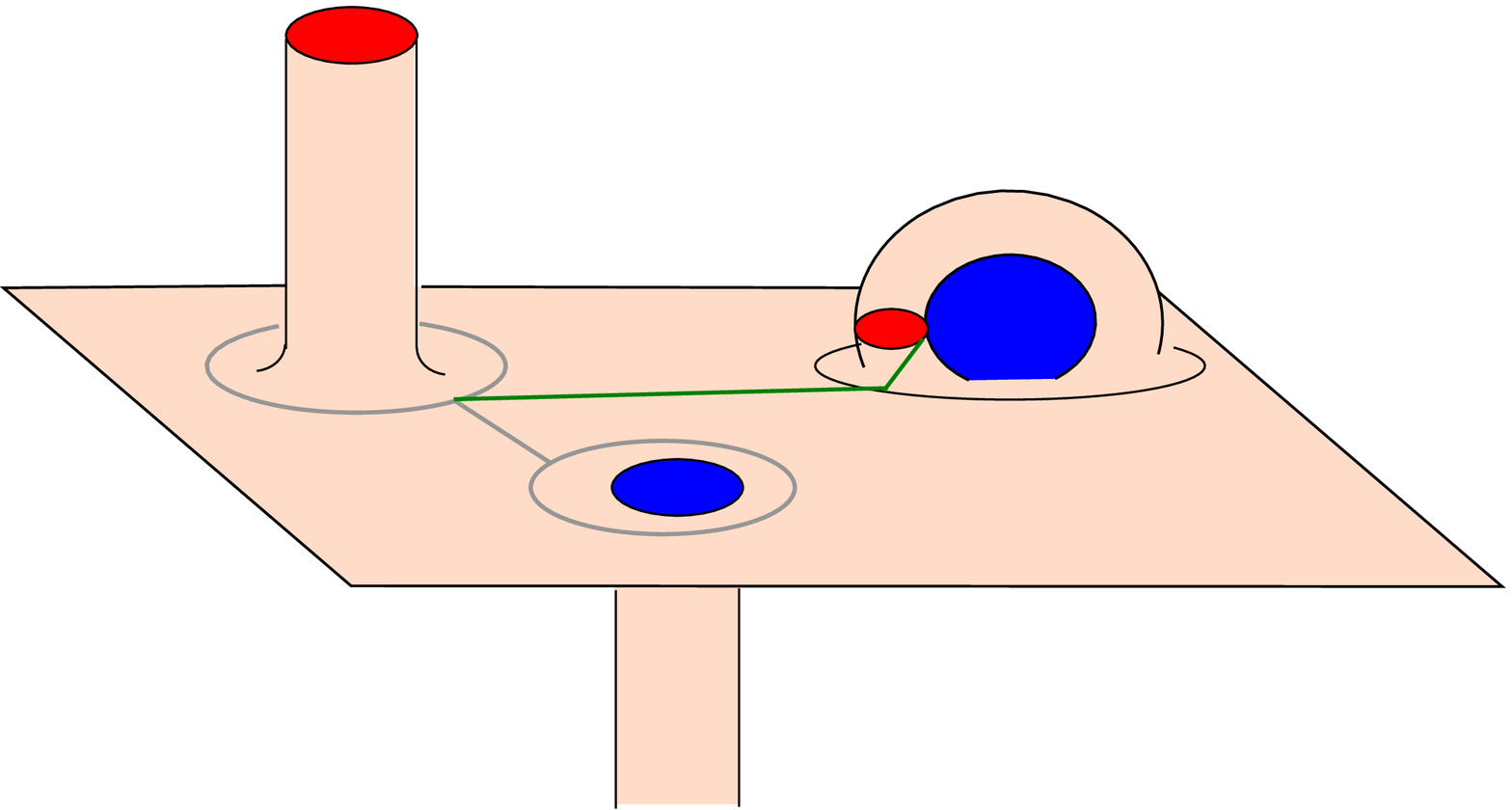}
     \caption{} \label{fig:stabilized}
    \end{figure}
    
        \begin{figure}[ht!]
\labellist
\small\hair 2pt
%\pinlabel  $\ell_a$ at 110 145
%\pinlabel  $\ell_b$ at 240 100
\pinlabel  $\eta_+$ at 190 125
\pinlabel  $\eta_1$ at 265 175
%\pinlabel  $a_1$ at 320 190
\pinlabel  $\bdd b_{g+1}$ at 450 185
%\pinlabel  $c_1$ at 365 150
\endlabellist
    \centering
    \includegraphics[scale=0.45]{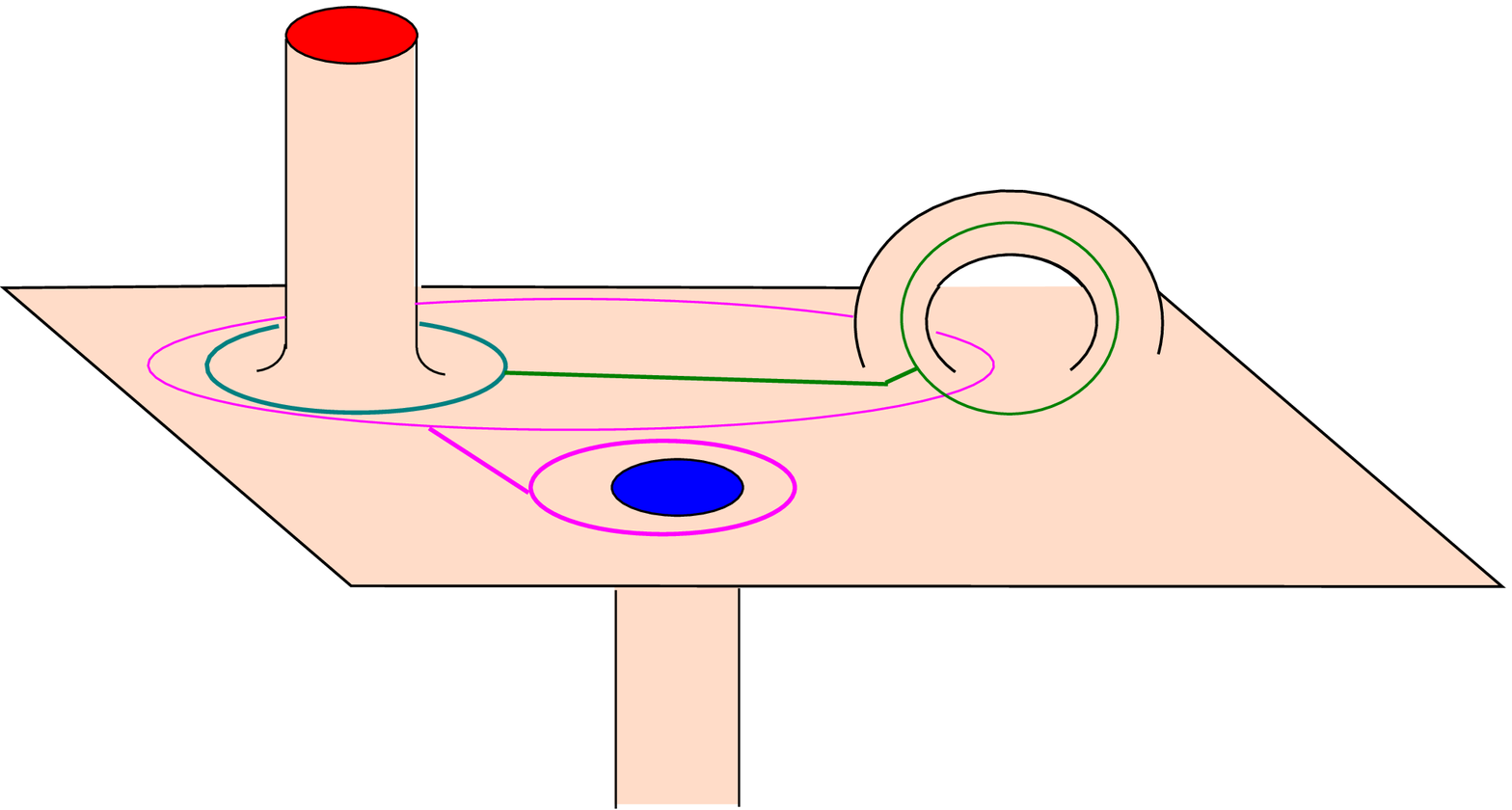}
     \caption{} \label{fig:stabilized2}
    \end{figure}

\begin{figure}[ht!]
\labellist
\small\hair 2pt
%\pinlabel  $\ell_a$ at 110 145
%\pinlabel  $\ell_b$ at 240 100
\pinlabel  $\tau^{-1}_1(\eta_+)$ at 150 470
\pinlabel  $\eta'_+$ at 265 180
%\pinlabel  $\tau^{-1}_1(\eta_+)$ at 265 515
%\pinlabel  $a_1$ at 320 190
%\pinlabel  $b_1$ at 440 185
%\pinlabel  $c_1$ at 365 150
\endlabellist
    \centering
    \includegraphics[scale=0.45]{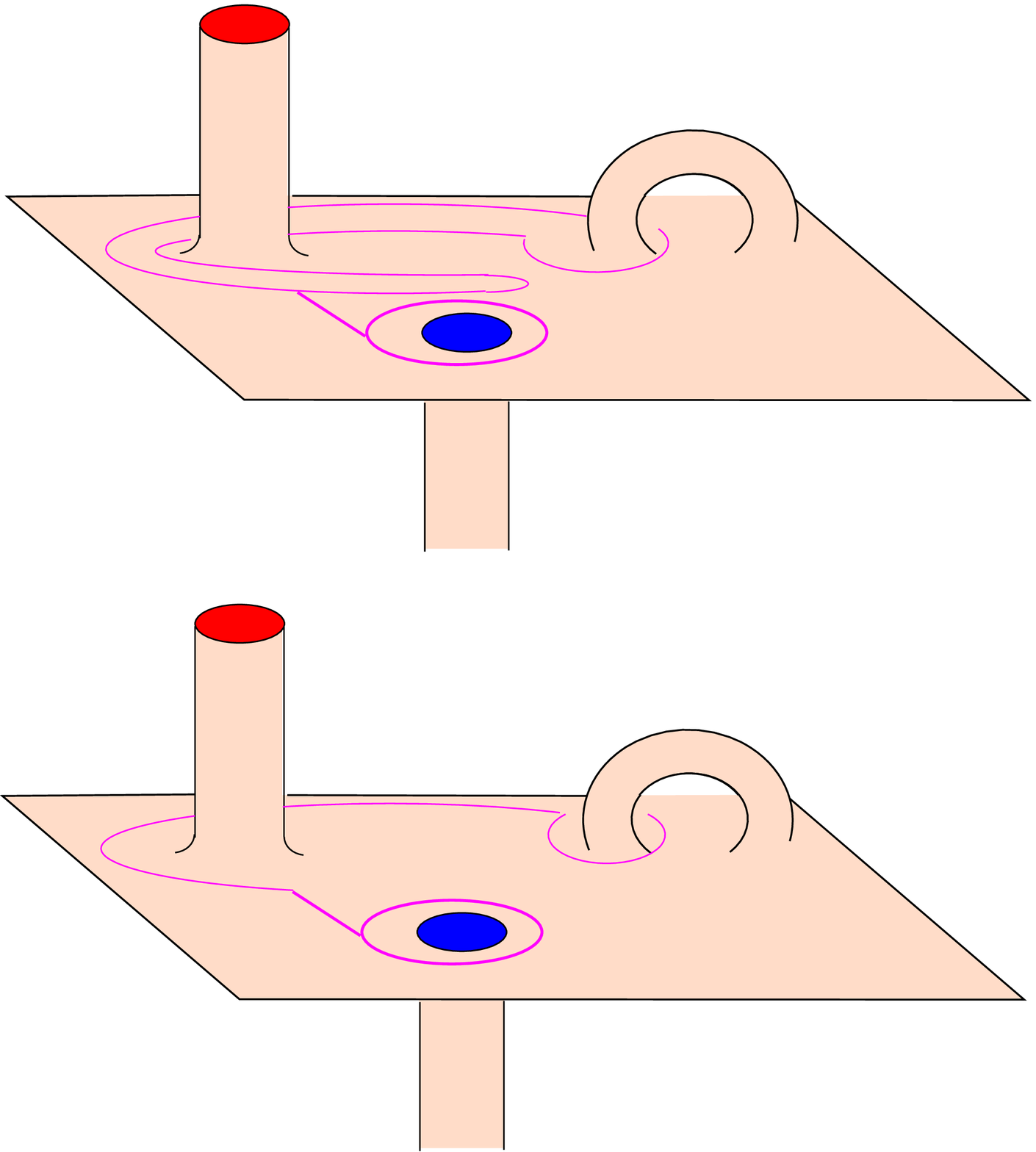}
     \caption{} \label{fig:stabilized3}
    \end{figure}
   
    Let $\eta'$ be the eyeglass twist whose eyeglass is $\{\mu_{g+1}, \ell_b, u \cup v\}$.  We know from Lemma \ref{lemma:eyeglass3o} that an eyeglass twist $\tau'$ along $\eta'$ {\bf is a Powell move}.  Let $\ell'_a$ be the band sum of $\ell_a$ and $\mu_{g+1}$ along $u$ and observe, by watching the motion of $\ell_b$, that the composition $\tau \tau'$ is an eyeglass twist $\tau_+$ whose eyeglass is $\eta_+$ is $\{\ell'_a, \ell_b, v\}$.

Now let $\eta_1$ be the eyeglass given by $\{\ell_a, \lambda_{g+1}, u\}$ and let $\tau_1$ be the eyeglass twist along $\eta_1$.  See Figure \ref{fig:stabilized2}.  Again, from Lemma \ref{lemma:eyeglass3o}, $\tau_1$ {\bf is a Powell move}.  Further observe that $\tau^{-1}_1(\eta_+)$ is an eyeglass $\eta'_+$ with lenses $\mu_{g+1}$ and $\ell_b$ and a bridge that intersects $c$ in a single point, so again an eyeglass twist $\tau'_+$ along $\eta'_+$ {\bf is a Powell move}.  See Figure \ref{fig:stabilized3}.  

Since $\tau^{-1}_1(\eta_+) = \eta'_+$ it follows that $\tau_+ = \tau_1\tau'_+ \tau^{-1}_1$. % [Check: $\tau_+(\eta_+) = \tau_1\tau'_+ \tau^{-1}_1(\eta_+) =  \tau_1\tau'_+(\eta'_+) = \tau_1(\eta'_+) = \eta_+$ as expected]. 
As a composition of Powell moves, $\tau_+$ {\bf is a Powell move}.  But we have earlier shown that $\tau = \tau_+ \tau'^{-1}$ so, as a composition of Powell moves, $\tau$ is a Powell move.  \end{proof}
    
    \begin{figure}[ht!]
\labellist
\small\hair 2pt
\pinlabel  $\eta_1$ at 200 680
\pinlabel  $\eta_2$ at 200 655
\pinlabel  $a_{g+1}$ at 260 720
\pinlabel  $b_{g+1}$ at 260 680
\pinlabel  $x$ at 130 720
\pinlabel  $y$ at 130 680
\pinlabel  $\tau_1$ at 205 600
\pinlabel  $\tau_2$ at 210 480
\pinlabel   {\text flip }$\omega_{g+1}$ at 230 370
\pinlabel   $\tau_1^{-1}$ at 220 240
\pinlabel   {\text standard bubble\;   move of $\frb_{g+1}$ around $x$} at 220 110
\endlabellist
    \centering
    \includegraphics[scale=0.75]{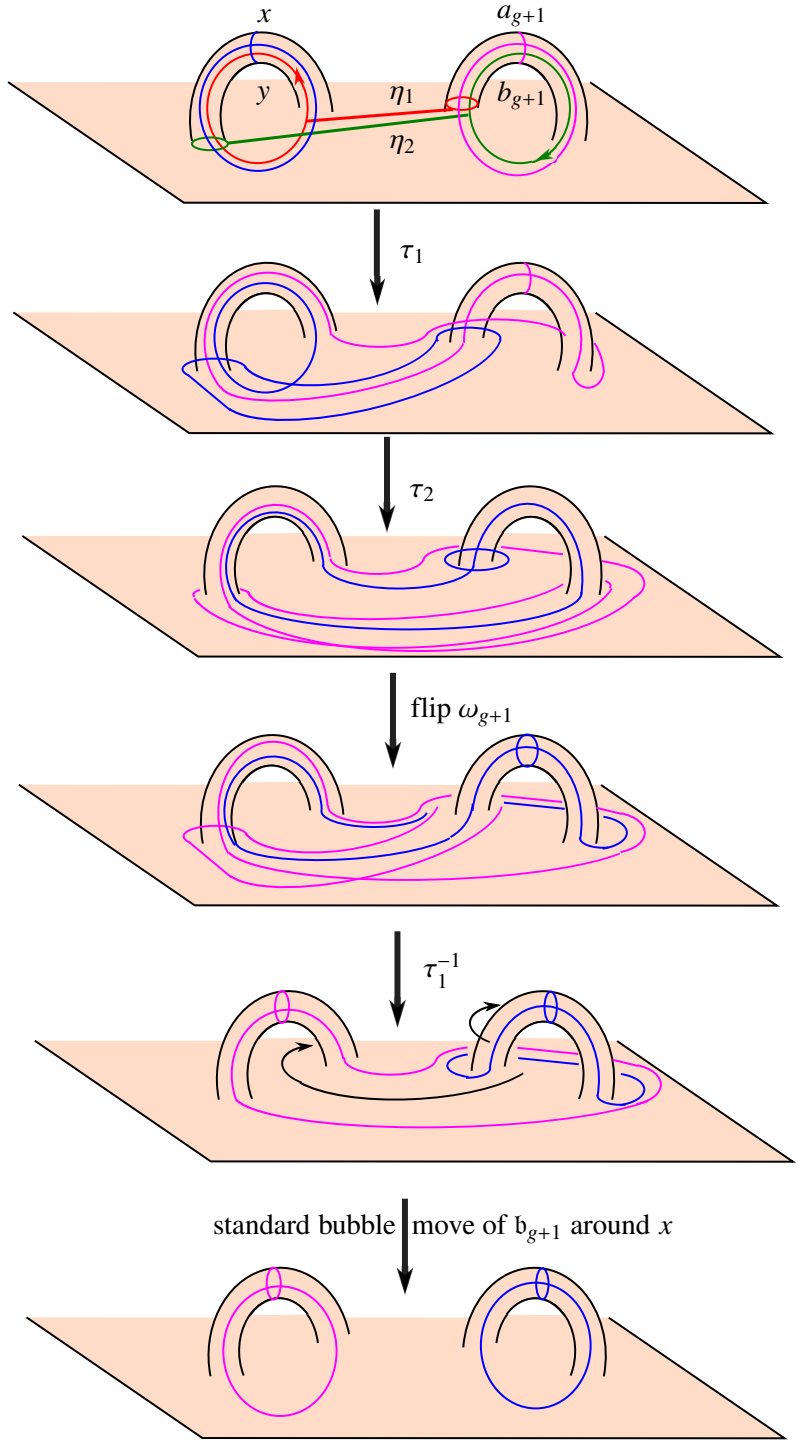}
     \caption{An exchange that is a Powell move} \label{fig:semiexch}
    \end{figure}
    
    \begin{prop}  \label{prop:ass1} Suppose $\tau \in \calG_{g+1}$ is a bubble exchange between the standard bubble $\frb_{g+1}$ and a generic bubble $\frb$.    Then $\tau$ is a Powell move.
\end{prop}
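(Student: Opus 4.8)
\emph{Proof proposal.} The plan is to write $\tau$ as a composition of five homeomorphisms, each of which has already been shown to be a Powell move, following the sequence of moves drawn in Figure~\ref{fig:semiexch}. Write $x \subset A$ and $y \subset B$ for the meridian and longitude disks of the generic bubble $\frb$ and $\gamma \subset T_{g+1}$ for the arc along which $\tau$ exchanges $\frb$ with $\frb_{g+1}$, and note that every homeomorphism below is supported in the genus-two bubble $W$ obtained by tube-summing $\frb$ and $\frb_{g+1}$ along $\gamma$.

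The five factors are as follows. Two of them, $\tau_1$ and $\tau_2$, are eyeglass twists whose eyeglasses each have one lens a standard meridian or longitude disk of $\frb_{g+1}$ and the other lens a disk of $\frb$, joined by a bridge that runs parallel to $\gamma$ and so meets $c = \bdd\frb_{g+1}$ in one point; by Corollary~\ref{cor:eyeglass3o} --- each such twist being a bubble move of $\frb$ around a loop of the form $\overline{\gamma}\,a_{g+1}\,\gamma$ or $\overline{\gamma}\,b_{g+1}\,\gamma$ --- the twists $\tau_1$, $\tau_2$, and hence $\tau_1^{-1}$, are Powell moves. The third factor is the standard flip $\omega_{g+1}$ of $\frb_{g+1}$, a Powell move by the discussion before Proposition~\ref{prop:newgen}. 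The last is a standard bubble move $\beta$ of the standard bubble $\frb_{g+1}$, namely the one sliding $\frb_{g+1}$ once around $\bdd x$, which is a Powell move by Lemma~\ref{lemma:braid2}.

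It then remains to verify the identity, read off from Figure~\ref{fig:semiexch},
\[ \tau \;=\; \beta\,\tau_1^{-1}\,\omega_{g+1}\,\tau_2\,\tau_1 \]
(with the factors applied right to left). I would check this by following the images of the four disks $\mu_{g+1}$, $\lambda_{g+1}$, $x$, $y$ through the successive panels of the figure, confirming that the net effect agrees, up to isotopy rel $\bdd W$, with the exchange $\tau$ --- which interchanges $\mu_{g+1}$ with $x$ and $\lambda_{g+1}$ with $y$ while preserving their chosen orientations. The flip $\omega_{g+1}$ is present exactly to absorb the orientation (framing) discrepancy left behind by the two slides $\tau_1,\tau_2$. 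Granting the identity, $\tau$ is a composition of Powell moves, hence a Powell move.

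The main obstacle is precisely this last, purely visual, step: choosing the correct bridges for the two eyeglasses, fixing the exact order of the five factors, and the bookkeeping that shows a single standard flip corrects all of the orientation data at once. Everything else is immediate from Corollary~\ref{cor:eyeglass3o}, from the fact that standard flips are Powell moves, and from Lemma~\ref{lemma:braid2}.
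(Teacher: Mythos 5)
Your proposal is correct and follows essentially the same route as the paper: the paper's proof is exactly the five-factor decomposition (read off Figure \ref{fig:semiexch}) into the eyeglass twists $\tau_1, \tau_2$ along eyeglasses with one lens in $\frb_{g+1}$ (Powell by Lemma \ref{lemma:eyeglass3o}), the standard flip $\omega_{g+1}$, then $\tau_1^{-1}$, and finally a standard bubble move of $\frb_{g+1}$ around $x$ (Powell by Lemma \ref{lemma:braid2}), in the same order you give. The only cosmetic difference is that the paper handles any residual orientation mismatch by pre- or post-composing with an extra standard flip rather than attributing that role entirely to $\omega_{g+1}$, and it verifies the composite visually panel by panel, exactly as you propose to do.
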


\begin{proof}  The entire proof is contained in Figure \ref{fig:semiexch}.  In the top panel the standard bubble $\frb_{g+1}$ is on the right, with the meridian circle $a_{g+1}$ and the longitude circle $b_{g+1}$ both shown in fuchsia.  On the left is a generic bubble with the boundaries of meridian $x \subset A$ and longitude $y \subset B$ shown in blue.  Also shown there are two eyeglasses, $\eta_1$ and $\eta_2$ in red and green respectively.  Since the bubble on the right is standard, by Lemma \ref{lemma:eyeglass3o} the eyeglass twists $\tau_i$ along $\eta_i$ are both Powell moves.  The direction of the twist along the $\eta_i$ that we will use is shown by arrows on the longitudinal lenses of $\eta_1$ and $\eta_2$, parallel respectively to $\bdd y$ and $b_{g+1}$.  

The next two panels down show the effect of first $\tau_1$ then $\tau_2$.  The following panel shows the effect of a (clockwise $\pi$) flip on the standard bubble $\frb_{g+1}$, a Powell move.  Then follow two Powell moves: $\tau_1^{-1}$ and a final move of the standard bubble around the meridian $x$ of the generic bubble.  The final result in the last panel is that the pair $\mu_{g+1}, \lambda_{g+1}$ and the pair $x, y$ have been exchanged.  

Note that the orientations of $x, y$ and $\mu_{g+1}, \lambda_{g+1}$ are not an issue: if an exchange as above moves an oriented $\mu_{g+1}$ to an unwanted orientation of $x$ (or vice versa) then precomposing (or postcomposing) with a standard flip (a Powell move) will fix the problem.
\end{proof}

\begin{cor} The function $\iota^+: \calG_g \to %\calP_{g+1}\bs
    \calG_{g+1}/\calP_{g+1}$ defined in Proposition \ref{prop:Ginclude} is trivial.\qed
\end{cor}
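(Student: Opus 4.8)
The plan is essentially one of assembly: the Corollary is obtained by feeding the two propositions just proved into Proposition~\ref{prop:main}. First I would recall what Proposition~\ref{prop:main} delivers: it asserts that $\iota^+: \calG_g \to \calG_{g+1}/\calP_{g+1}$ is trivial provided two hypotheses hold, namely (1) every generic bubble exchange between the standard bubble $\frb_{g+1}$ and a disjoint genus one bubble in $(S^3, T_{g+1})$ is a Powell move, and (2) every eyeglass twist in $(S^3, T_{g+1})$ whose frame is disjoint from $\frb_{g+1}$ is a Powell move.

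Next I would observe that hypothesis~(1) is verbatim the conclusion of Proposition~\ref{prop:ass1} and hypothesis~(2) is verbatim the conclusion of Proposition~\ref{prop:ass2}. Both of these have now been established, so both hypotheses of Proposition~\ref{prop:main} are in force, and its conclusion, that $\iota^+$ is trivial, follows at once. No new geometric input is required at this last step.

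The only thing worth flagging is that all of the genuine work already sits inside the three preceding propositions: Proposition~\ref{prop:main} itself combines the generating set for $\calP_g$ from Proposition~\ref{prop:newgen} with the structural theorem of \cite{Sc2} (that $\calG_g$ is generated by eyeglass twists and topological conjugates of Powell's generators), and it upgrades Assumption~(1) to the ``Claim~1 / Claim~2'' forms about generic flips and generic exchanges disjoint from $\frb_{g+1}$ by conjugating by bubble exchanges; while Propositions~\ref{prop:ass1} and~\ref{prop:ass2} are the two highly visual arguments supplying the hypotheses. The hard part is therefore entirely behind us; granting those results, the Corollary is immediate, and the proof is a single sentence citing Proposition~\ref{prop:main} together with Propositions~\ref{prop:ass1} and~\ref{prop:ass2}.
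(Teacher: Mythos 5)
Your proposal is correct and is exactly the paper's (implicit) argument: the Corollary is stated with no further proof precisely because Propositions \ref{prop:ass1} and \ref{prop:ass2} verify, respectively, hypotheses (1) and (2) of Proposition \ref{prop:main}, whose conclusion is the triviality of $\iota^+$. Nothing further is needed.
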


{\bf Two final notes:}
%\medskip 

The proof of Proposition \ref{prop:ass1} focuses on the standard bubble $\frb_{g+1}$ and the generic bubble $\frb$, both lying in the neighborhood of a disk $D$ that contains the exchange; the exchange map is the identity map outside of the neighborhood of $D$.  Left undiscussed is the pair of pants $D - (\frb \cup \frb_{g+1}) \subset T_{g+1}$. Observe, though, that any homeomorphism of this pair of pants, fixed on its boundary, is also a Powell move.  Here is the argument:  Any such homeomorphism can be constructed by Dehn twisting some number of times around each of the boundary components of the pair of pants.  A Dehn twist at the boundary of the standard bubble $\frb_{g+1}$ is a Powell move, for it is a composition of two standard flips.  Similarly, conjugating this Powell Dehn twist by the exchange just described, also a Powell move, gives a a Dehn twist around the bubble $\frb$, so such a Dehn twist is also Powell.  Lastly, its easy to check that the exchange defined above gives a simple (clockwise) half-twist on a collar of $\bdd D$, so a full Dehn twist around that boundary component can be accomplished just by doing the exchange twice.  
\medskip

Because the exchange in Proposition \ref{prop:ass1} moves $\frb_{g+1}$ to another bubble, the exchange itself is not in the image of $\iota^+$ and its construction may seem a bit {\em ad hoc}. The following appendix puts the bubble exchange into a broader context, one involving an order 12 dihedral subgroup of $\calG_2$, each of whose elements somewhat naturally defines an element of $\calG_{g+1}$ that also lies in $\calP_{g+1}$ but not in the image of $\iota^+$.   Figure \ref{fig:genswitch} shows the connection between one such element of $\calG_2$ and a bubble exchange like that of Proposition \ref{prop:ass1}.
\newpage
%\appendix

 \setcounter{section}{0} 

\begin{center}
\large 
\textsc{Appendix: Proposition \ref{prop:ass1} in a larger context}
\end{center} 

\renewcommand{\thesection}{A\arabic{section}}

\section{Symmetries of $K_{2, 3}$ as a subgroup of $\calG_2$} \label{sect:dih}

 \begin{figure}[ht!]
  \labellist
\small\hair 2pt
\pinlabel  $z$ at 230 280
\pinlabel  $x$ at 320 80
\pinlabel  $y$ at 300 230
\pinlabel  $q_0$ at 275 215
\pinlabel  $q_2$ at 235 55
\pinlabel  $q_1$ at 20 155
\pinlabel  $\frc_0$ at 180 120
\pinlabel  $\frc_2$ at 245 135
\pinlabel  $\frc_1$ at 80 245
\pinlabel  $\frc_e$ at 100 220
\endlabellist
    \centering
    \includegraphics[scale=0.6]{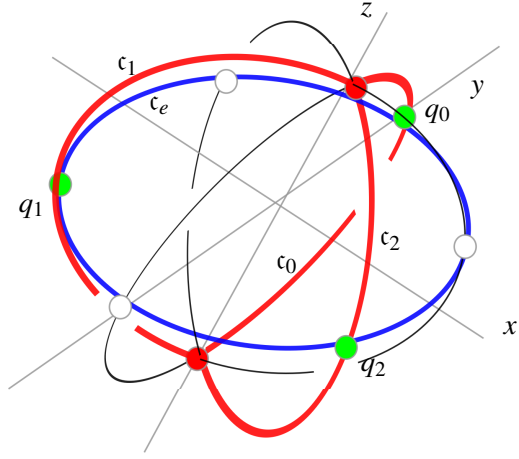}
   \caption{A highly symmetric placement of $T_2$ in $S^3$} \label{fig:involS3x}
    \end{figure}

Consider the complex shown in Figure \ref{fig:involS3x}.  It is embedded in the unit sphere $S^2 \subset S^3$ viewed as the boundary of a ball in $\real^3 \subset S^3$, with axes in $\real^3$ as shown.  There are four great circles on $S^2$, three of them vertical circles $\{ \frc_0, \frc_1, \frc_2\}$ that pass through the poles, and, respectively, through the (green) points $q_0, q_1, q_2$ in the  equator $\frc_e$.  The points $q_0, q_1, q_2 \in \frc_e$ are respectively at an angle of $\{ 0, 2\pi/3, 4\pi/3 \}$ from the point $(0, 1, 0)$ on the $y$ axis.  Thus each $q_i, i \in \zed_3$ is one of the two points in $\frc_i \cap \frc_e$.  

Let $K$ denote the subcomplex consisting of the two poles and the three points $q_i$, together with the semicircles in each of the $\frc_i, i \in \zed_3$ that contain the 
%points in the equator.  
$q_i$. $K$ can be thought of as the complete bipartite graph $K(2, 3)$.  A regular neighborhood of $K$ (shown in red) is a genus 2 handlebody we will denote $A$.  Its complement in $S^3$ is a genus 2 handlebody we denote $B$.  Thus $K$ determines a genus 2 Heegaard splitting of $S^3$.  

Consider the group $\calG_K \subset \calG_2$ generated by $\pi$-rotations of $S^3$ around these four great circles, as these rotations act on $A$.  We denote these $\pi$-rotations by $\rho_e, \rho_0, \rho_1, \rho_2$ where $\rho_e$ is $\pi$-rotation around $\frc_e$ and each $  \rho_i, i = 0, 1, 2$ is $\pi$-rotation around $\frc_i$.   Here are some obvious properties:

\begin{enumerate}
\item Each $\pi$-rotation interchanges the interior of the unit ball with the exterior.
\item $\rho_e$ is the hyperelliptic involution on $A$.  It leaves the $q_i$ fixed but interchanges the poles.
\item Each $\rho_i$ leaves $\frc_i$ and $\frc_e$ setwise fixed but transposes the other two circles $\frc_j, j \neq i$.  Thus it leaves the poles fixed, but transposes two of the $q_i$.
\end{enumerate}

Daryl Cooper \cite{Co} has shown me a pleasant orbifold picture of $S^3/\calG_K$, see \cite{CHK} and Figure \ref{fig:orbifold}.  The 3 red half-circles and the red dots again represent $K$, whose regular neighborhood has boundary $T_2$.  The lines through the blue origin meet at a blue point at infinity not shown.  The stabilizer of the $z$-axis is the dihedral group $\Dih_3$ of order 6.  The stabilizers of each of the green dots $q_0, q_1, q_2$ (and of their antipodal white dots) are Klein 4-groups.  

The unit sphere divides $S^3$ into two balls, one inside and one outside, and is itself divided by the circles $\{\frc_e, \frc_0, \frc_1, \frc_2\}$ into 12 congruent triangles with angles $\pi/3, \pi/2, \pi/2$.
The unit ball is divided into 12 spherical 3-simplices by taking the cone from the blue origin to
these spherical triangles.
The ball that is the complement of the unit ball is also divided into 12 spherical triangles using the cone from blue $\infty$.  $\calG_K$ preserves this decomposition of $S^3$ into 24 spherical 3-simplices.
A fundamental domain $D$ for $\calG_K$ is the suspension from the blue dots (one at $\infty$) of the spherical triangle with vertices green, white, red.  Thus $D$ is two spherical 3-simplices identified along that triangle.   The action of $\calG_K$ identifies points on $\bdd D$
to give a space that is geometrically the double of one of the 3-simplices.

\begin{figure}[ht!]
  \labellist
\small\hair 2pt
\pinlabel  $2$ at 480 180
\pinlabel  $2$ at 520 160
\pinlabel  $2$ at 530 250
\pinlabel  $2$ at 480 120
\pinlabel  $2$ at 600 160
\pinlabel  $3$ at 450 220
\pinlabel  $y$ at 380 195
\pinlabel  $z$ at 210 340
\pinlabel  $\frc_e$ at 110 130
\pinlabel  $\frc_0$ at 300 300
\pinlabel  $\frc_1$ at 220 280
\pinlabel  $\frc_2$ at 180 280
\pinlabel  $q_2$ at 170 140
\pinlabel  $q_1$ at 140 250
\pinlabel  $q_0$ at 320 170
\endlabellist
    \centering
    \includegraphics[scale=0.5]{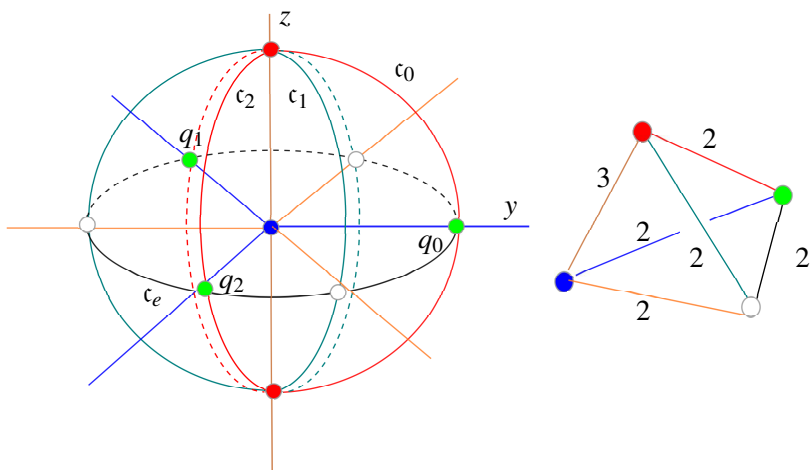}
   \caption{Orbifold picture of $S^3/\calG_K$} \label{fig:orbifold}
    \end{figure}

We will show that  $\calG_K  \subset \calG_2$ is isomorphic to the symmetry group of %the complete bipartite graph
$K_{2, 3}$, namely the product of $\zed_2$ (the symmetry group of two elements, in our case the poles) and $\Dih_3$ the six element dihedral group that is the symmetry group of three elements, in our case the $q_i$.  Thus the symmetry group of $K(2, 3)$ is the 12 element dihedral group $\Dih_6$.  

%There is a natural notation for elements of the symmetry group of $K(2, 3)\cong\Dih_6$, namely if an element of the symmetry group transposes the two-element part, associate the sign $-$; if not associate the sign $+$.  Then use  the standard cycle notation for the action on the three-element part of $K(2, 3)$.  Thus in Figure \ref{fig:k23} (and $\rho_y$ rotation around the $y$-axis pointing into the page) we have 
%
%\[\rho_{\theta} = +(021) \quad \rho_z = +(12) \quad \rho_x = -(\; ) \quad \rho_y = -(12)\]

\begin{lemma}  $\calG_K \cong \Dih_6$.
\end{lemma}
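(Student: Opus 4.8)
The plan is to realize $\calG_K$ as the image of a finite isometry group and then bound its order from above and below. Fix a small round $\epsilon$-neighborhood $A$ of $K$ in $S^3$; since the four $\pi$-rotations $\rho_e,\rho_0,\rho_1,\rho_2$ are isometries preserving $K$, they preserve this $A$, and hence act on the pair $(S^3,T_2)$. Let $G\subset SO(4)$ be the subgroup they generate as isometries. Then there is a natural surjective homomorphism $G\twoheadrightarrow\calG_K$, so it suffices to prove $|G|=12$ and $|\calG_K|\geq 12$.

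For $|G|=12$, observe that every element of $G$ carries $K$ to itself and hence induces a graph automorphism of $K_{2,3}$; since the two poles have valence $3$ and the $q_i$ have valence $2$, this gives a homomorphism $G\to\operatorname{Aut}(K_{2,3})=S_2\times S_3$. It is surjective because $\rho_e$ transposes the poles and fixes each $q_i$, while each $\rho_i$ fixes the poles and transposes the two $q_j$ with $j\neq i$, and these permutations generate $S_2\times S_3$. It is injective because an element of its kernel is an isometry of $S^3\subset\real^4$ fixing all five vertices of $K$; these five points affinely span the copy of $\real^3$ containing the unit $S^2$, so such an isometry fixes that $\real^3$ pointwise and, being orientation preserving, is the identity. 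Hence $G\cong S_2\times S_3\cong\Dih_6$, so $\calG_K$ is a quotient of $\Dih_6$ and $|\calG_K|\in\{1,2,3,4,6,12\}$; it is therefore enough to exhibit more than six distinct elements of $\calG_K$.

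For the lower bound I would use the action on the homology of the handlebody. The $\calG_2$-action on $H_1(T_2;\zed)$ preserves the Lagrangian $\ker\big(H_1(T_2;\zed)\to H_1(A;\zed)\big)$ on the subgroup preserving $A$ (which contains $\calG_K$), so it descends to a well-defined homomorphism $\calG_K\to GL(H_1(A;\zed))=GL_2(\zed)$. Choosing a basis $e_1,e_2$ of $H_1(A)\cong\zed^2$ coming from a spanning tree of $K$, one computes directly that $\rho_e$, the hyperelliptic involution of $A$, acts as $-I$, while $\rho_0$ and $\rho_1$ act as order-two matrices whose product has order $3$; the subgroup they generate is a copy of $S_3$ not containing the central element $-I$. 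Thus the image of $\calG_K$ in $GL_2(\zed)$ is $\langle -I\rangle\times S_3$, of order $12$, so $|\calG_K|\geq 12$. Combining the two bounds gives $|\calG_K|=12$, and then the surjections $\Dih_6\cong G\twoheadrightarrow\calG_K\twoheadrightarrow\langle -I\rangle\times S_3$ are all isomorphisms, so $\calG_K\cong\Dih_6\cong\zed_2\times\Dih_3$.

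The step I expect to be the real work is the homology computation underlying the lower bound — in particular confirming that $\rho_e$ acts as $-I$ and that $\langle\rho_0,\rho_1\rangle$ maps onto a copy of $S_3$ disjoint from $\langle -I\rangle$ — since the remaining steps are soft rigidity and group theory. (Any argument producing seven distinct classes in $\calG_K$ would suffice to finish, but the $GL_2(\zed)$-action is the cleanest certificate.)
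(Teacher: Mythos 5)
Your proof is correct, and it differs from the paper's in one substantive way. The first half --- identifying the group $G$ of isometries generated by $\rho_e,\rho_0,\rho_1,\rho_2$ with $\operatorname{Aut}(K_{2,3})\cong \zed_2\times\Dih_3\cong\Dih_6$ --- is essentially the paper's argument; the only difference there is cosmetic: the paper certifies injectivity of $G\to\operatorname{Aut}(K_{2,3})$ by a parity count (a word acting trivially on the vertices uses an even number of $\pi$-rotations, hence preserves the two sides of the unit sphere, and an isometry fixing $K$ pointwise and preserving each side is the identity), whereas you observe directly that the five vertices of $K$ span the copy of $\real^3$ containing $S^2$, so an orientation-preserving isometry of $S^3$ fixing them is the identity. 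Both work. Where you genuinely depart from the paper is in supplying the lower bound $|\calG_K|\ge 12$: the paper passes silently over the possibility that a nontrivial element of $G$ could become isotopically trivial in $\calG_2$ (equivalently, over the well-definedness of its homomorphism $\calG_K\to\Dih_6$ on isotopy classes), in effect identifying $\calG_K$ with $G$ without comment. Your $H_1(A)$ argument closes exactly that gap, and the computation you flag as the real work does check out: $\rho_e$ reverses each of the three pole-to-pole arcs through the $q_i$, hence acts as $-I$ on $H_1(K)=H_1(A)\cong\zed^2$, while $\rho_0,\rho_1,\rho_2$ permute those arcs and so induce the standard two-dimensional representation of $S_3$ on the sum-zero sublattice of $\zed^3$, which is faithful and contains no element acting as $-I$; the image in $GL_2(\zed)$ therefore has order $12$. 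The net effect is that your write-up is slightly more complete than the paper's: it separates the isometry group from its image in the mapping class group and certifies that the map between them is injective, a step the paper's proof needs but does not record.
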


\begin{proof}  We have noted above that there is a natural homomorphism $\calG_K \to \Dih_6$, namely $\rho_e$ transposes the poles and each $\rho_i$ transposes the two points $q_j, j \neq i$. The homomorphism is an epimorphism because transpositions of the $q_i$ generate $\Dih_3 \subset Dih_6$.  Finally, suppose that a composition  $g \in \calG_K$ of the $\pi$-rotations is the identity on $K(2, 3)$.  In particular, it is the identity on both the equator of $S^2$ and the poles.  Each $\pi$-rotation is a transposition in $K(2, 3)$ and also interchanges the interior of the ball with the exterior.  If the result is the identity on $K(2, 3)$ then there are an even number of transpositions, hence $g$ does not exchange the interior of the ball with the exterior, and so is the identity on all of $S^3$.
\end{proof}

Following the Lemma, we will henceforth denote $\calG_K$ by $\Dih_6$.

 \begin{figure}[ht!]
  \labellist
\small\hair 2pt
\pinlabel  $z$ at 70 160
\pinlabel  $\rho_z=\rho_0$ at 250 180
\pinlabel  $\rho_x=\rho_e$ at 360 45
\pinlabel  $\rho_{\theta}=\rho_0\rho_2$ at 430 170
\pinlabel  $q_2$ at 135 90
\pinlabel  $q_1$ at 0 90
\pinlabel  $q_0$ at 70 90
\pinlabel  $x$ at 155 90
\pinlabel  $a_2$ at 475 85
\pinlabel  $b_2$ at 408 85
\endlabellist
    \centering
    \includegraphics[scale=0.8]{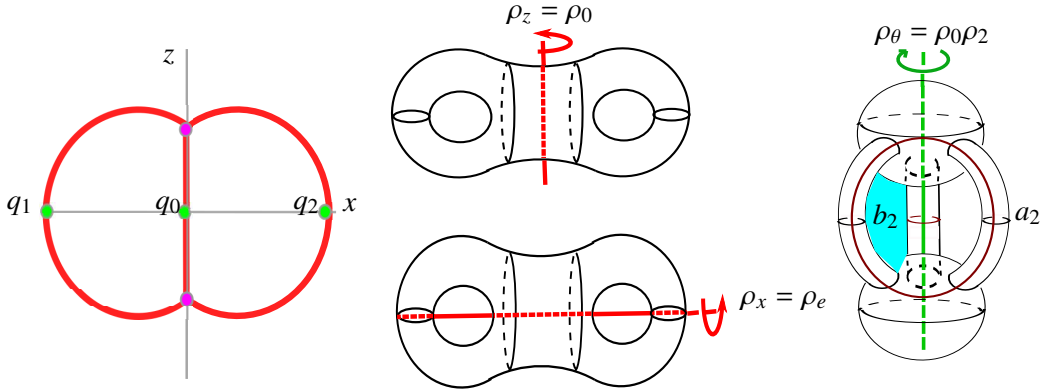}
   \caption{$\Dih_6$ acting on the genus 2 handlebody (planar viewpoint) } \label{fig:k23}
    \end{figure}

There is another point of view on $K$, one that is useful for understanding the stabilizer of a vertex $q_i$.  Such a stabilizer is a subgroup of $\Dih_6$ that is isomorphic to the Klein 4-group.  Figure \ref{fig:k23} examines, as an example, the subgroup that stabilizes $q_0$.  Here $K$ is placed in a plane with $q_0$ at the origin; we will call it the {\em planar viewpoint}.  It is obtained in the first panel of Figure \ref{fig:k23} by deforming $S^2 \subset S^3$ so that it lies on the $xz$-plane.  Then $\frc_e$ becomes the $x$-axis, $\frc_0$ becomes the $z$-axis and the $y$-axis points into the page.  From this point of view, as shown in the second panel, $\rho_0$ becomes $\pi$-rotation $\rho_z$ around the $z$-axis and the hyperelliptic involution $\rho_e$ becomes $\pi$-rotation around the $x$-axis $\rho_x$.  So then $\pi$-rotation around the $y$-axis is $\rho_y = \rho_x\rho_z = \rho_e\rho_0$.  (Whether $\rho_x, \rho_y, \rho_z$ are represented by a clockwise or counterclockwise rotation about the axis is immaterial: the difference represents the non-trivial element of $\pi_1(S0_3)$, and so is non-trivial in $\tilde{\calG}_2$ but trivial in $\calG_2$.)  The third panel returns to the original positioning of the $q_i$, and notes, as an example, that a $-2\pi/3$ rotation $\rho_{\theta}$ corresponds to the composition $\rho_0\rho_2$, each determining a transposition in $\{q_i\}$.  

There is a natural notation for elements of the symmetry group of $K(2, 3)\cong\Dih_6$, namely if an element of the symmetry group transposes the two-element part, associate the sign $-$; if not associate the sign $+$.  Then use  the standard cycle notation for the action on the three-element part of $K(2, 3)$.  Thus in Figure \ref{fig:k23} (and $\rho_y$ rotation around the $y$-axis pointing into the page) we have 

\[\rho_{\theta} = +(021) \quad \rho_z = +(12) \quad \rho_x = -(\; ) \quad \rho_y = -(12)\]

 \begin{figure}[ht!]
  \labellist
\small\hair 2pt
%\pinlabel  $u_0$ at 250 95
\pinlabel  $a_0$ at 95 60
\pinlabel  $a_2$ at 170 60
\pinlabel  $a_1$ at 20 60
\pinlabel  $b_0$ at 100 110
\pinlabel  $b_1$ at 140 50
\pinlabel  $b_2$ at 50 50
\endlabellist
    \centering
    \includegraphics[scale=1]{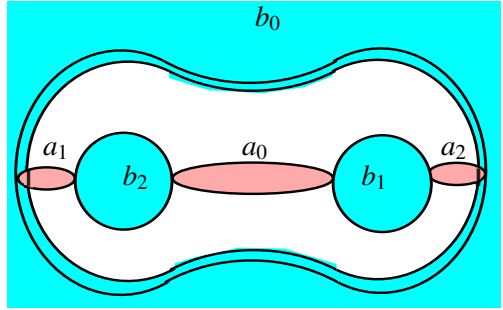}
   \caption{Labelled meridians of $A$ and $B$} \label{fig:Kdef2}
    \end{figure}
    
    As described above, each of the genus 2 handlebodies $A$ and $B$ has a natural set of 3 meridian disks, called {\em labeled meridians}.  Those for $A$ are labelled $a_0, a_1, a_2$, each $a_i, i \in \zed_3$ located where the $xy$-plane intersects $A$ at $q_i$.  Similarly the meridians for $B$ are labeled $b_0, b_1, b_2$, each $b_i$ consisting of the disk component of $S^2 - A$ that is centered on the antipode of $q_i$ in $S^2$.  The meridian disks $a_2 \subset A$ and $b_2 \subset B$ are named in the third panel of Figure \ref{fig:k23}; the boundary of the meridian $b_0$ is also drawn there.  
    
    Figure \ref{fig:Kdef2} shows the labeled meridians as they appear in the planar viewpoint.  From this perspective the meridians $\{a_i\}, i \in \zed_3$ are $A \cap (xy{\rm -plane})$ and  the meridians $\{b_i\}$ are $B \cap (xz{\rm -plane} \cup \infty)$.  
    
We have these elementary observations:

\begin{lemma} \label{lemma:Kdef}  Let $\kappa \subset T_2$ be the $1$-complex that is the union of the 6 boundary circles of the labeled meridians.  
\begin{enumerate}
\item Each of the sets  $\{a_i\}$ and  $\{b_i\}$, $ i \in \zed_3$, of labeled meridians are  setwise invariant under the action of $\Dih_6$ and therefore $\kappa$ is also. 
\item A labeled $A$-meridian is disjoint from exactly one labeled $B$-meridian and meets each of the other two labeled $B$-meridians in a single point (and symmetrically).  
\item A labeled $A$-meridian and a labeled $B$-meridian that meet in a single point are called an {\em orthogonal pair of labeled meridians}; each such pair defines a genus 1 bubble in $T_2$.  
%\item For any orthogonal pair of labeled meridians, say $u_r, w_r$, and any $\psi \in \Dih_6$ the complement in $T_2$ of $(u_r \cup w_r) \cup \psi(u_r \cup w_r)$ is connected.  
\item The complement in the surface $T_2$ of any two orthogonal pairs of labeled meridians is connected. Two examples: $T_2 - [(a_2 \cup b_1) \cup (a_1 \cup b_2)]$ is an annulus and $T_2 - [(a_2 \cup b_1) \cup (a_0 \cup b_2)]$ is a disk. \qed
\end{enumerate} 
\end{lemma}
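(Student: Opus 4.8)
\textit{Proof proposal.}  The plan is to verify all four assertions by a direct inspection of the complex $K(2,3)$ and of its symmetric regular neighborhood $A$, working in the planar viewpoint of Figures \ref{fig:k23} and \ref{fig:Kdef2}; every argument is pure bookkeeping.  For (1) the point is that each of the four generating $\pi$-rotations of $\Dih_6=\calG_K$ is an isometry of $S^3$ carrying the complex $K$ to itself, and hence carries $A$ to $A$ and $B$ to $B$ (a regular neighborhood of $K$ may be chosen invariant under this finite group of isometries).  The labeled $A$-meridian $a_i$ is the meridian of the unique arm of $A$ running through the degree-two vertex $q_i$, and the labeled $B$-meridian $b_i$ is the face of $S^2\setminus A$ containing the antipode $-q_i$; each is thus canonically attached to the vertex $q_i$ of $K(2,3)$.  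Consequently a symmetry of $K$ permutes $\{a_0,a_1,a_2\}$ and permutes $\{b_0,b_1,b_2\}$ by exactly the permutation it induces on $\{q_0,q_1,q_2\}$, so both of these sets, and therefore their union $\kappa$, are setwise $\Dih_6$-invariant.

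For (2), the face of $S^2\setminus K$ containing $-q_j$ is bounded by the two semicircular arcs of $K$ through the vertices $q_k$ with $k\neq j$, together with two short arcs near the poles; hence $\partial b_j$ runs alongside those two arcs and avoids a neighborhood of $q_j$.  Since $\partial a_j$ is a small circle encircling the arm of $A$ at $q_j$, it is disjoint from $\partial b_j$, while for $k\neq j$ the arc through $q_k$ passes once through the meridian disk $a_k$, forcing $|\partial a_k\cap\partial b_j|=1$; this can also simply be read off Figure \ref{fig:Kdef2}, and the symmetric statement follows identically (or from part (1)).  So the disjointness relation is the perfect matching $a_i\leftrightarrow b_i$, and the orthogonal pairs are exactly the $(a_i,b_j)$ with $i\neq j$.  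For (3), given such a pair let $\Sigma\subset T_2$ be a regular neighborhood of $\partial a_i\cup\partial b_j$; as the curves meet once, $\Sigma$ is a once-punctured torus with meridian $\partial a_i$ and longitude $\partial b_j$.  The disks $a_i\subset A$ and $b_j\subset B$ meet only at the single point $\partial a_i\cap\partial b_j$, so $a_i\cup b_j\cup\Sigma$ deformation retracts onto the contractible $2$-complex $a_i\cup b_j$; hence a thin regular neighborhood $\frb=N(a_i\cup b_j\cup\Sigma)$ of it in $S^3$ is a ball with $\frb\cap T_2=\Sigma$, and one checks directly that $\frb\cap A$ and $\frb\cap B$ are the two solid tori of a genus $1$ bubble with meridian disk $a_i$ and longitudinal disk $b_j$.

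For (4) I would run a short case analysis, using part (2), on how two orthogonal pairs $(a_i,b_j)$ and $(a_k,b_l)$ interact.  Because there are only three indices, one always lands in one of the following situations: the pairs coincide; they share their $A$-meridian or share their $B$-meridian (three curves, two intersection points); $\{i,j\}=\{k,l\}$ with $i\neq k$, so the pairs are $(a_i,b_k)$ and $(a_k,b_i)$ (four curves meeting in two points, filling two disjoint once-punctured tori whose union is all of $T_2$); or all four curves are distinct with $\{i,j\}\neq\{k,l\}$ (four curves meeting in exactly three points).  In each case one exhibits the complement explicitly: in the ``two disjoint punctured tori'' case it is an annular neighborhood of the separating curve $\partial\Sigma$ with a disk glued on from each side — the annulus of the first displayed example — and in the four-distinct-curve case one gets the disk of the second displayed example; an Euler-characteristic count $\chi(T_2\setminus G)=\chi(T_2)-\chi(G)$, with $G$ the union of the curves viewed as a $4$-valent graph, then confirms connectedness and pins down the homeomorphism type.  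The only step needing any real care is this last piece of bookkeeping — correctly listing the intersection points in each configuration and verifying that no sub-collection of the four curves separates $T_2$; everything else is immediate from the figures.
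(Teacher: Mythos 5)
Your proposal is correct and follows the same route the paper (implicitly) takes: the paper states this lemma as a list of elementary observations with a \qed and no written proof, leaving exactly the direct inspection of $K(2,3)$, the faces of $S^2 - A$, and the resulting intersection pattern $a_i \cap b_j = \emptyset \iff i = j$ that you carry out, together with the case analysis and Euler-characteristic count for part (4). The only slip is cosmetic: in the reversed-pair case the complement is the union of two half-open annuli (each a punctured torus minus its spine) glued along the separating curve $\partial\Sigma$, not an annular neighborhood ``with a disk glued on from each side,'' but your $\chi$ computation and your stated conclusions (annulus, disk) are the correct ones and match the paper's examples.
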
  

Regarding (4), we do not require that the two orthogonal pairs be disjoint, so their union may intersect $T$ in either two, three or four circles, each lying in $\kappa$.

\section{From $\calG_2$ to $\calG_g$ - first observations} \label{sect:firstobs}

The operation of $\Dih_6$ on $(S^3, T_2)$ just described  %, which would be familiar to Goeritz himself,
 will be useful in understanding bubble exchanges in $\calG_g$, as we will explain in the next few sections. %This section describes more generally how one might 

Pick a point $p_{\frc}$  on the curve $\frc$ in the standard genus $g-2$ Heegaard surface $T_{g-2}$.  The complement of a small $3$-ball neighborhood of $p_{\frc}$ is a genus $g-2$ bubble. Choose a homeomorphism of this bubble to a small ball in $\Rrr^3$ centered at $0$, and denote the ball $\frb_{\frc}$.  For any point $p \in T_2$ consider the genus $g$ Heegaard surface of $S^3$ obtained by replacing a small ball neighborhood in $S^3$ of $p \in T_2$ by the bubble $\frb_{\frc}$, translated in $\Rrr_3$ to $p$. See Figure \ref{fig:tgp}.  

The sphere $\bdd \frb_{\frc}$, when displaced to $p$, typically intersects each of $T_2$ and the original $T_{g-2}$ in different circles on $\bdd \frb_{\frc}$, but they can be unambiguously deformed to match up:  Normally orient the circle $\bdd \frb_c \cap T_2$ from the handlebody $A$ to the handlebody $B$ and do the same for the circle $\bdd \frb_c \cap T_{g-1}$.  Then isotope one circle to the other in a collar of $\bdd \frb_{\frc} \subset T_2$.  This can be done unambiguously, because the space of oriented great circles in the 2-sphere is just $S^2$ and so is simply connected.  Denote the resulting genus $g$ Heegaard surface by $T(p)$.  

 \begin{figure}[ht!]
  \labellist
\small\hair 2pt
\pinlabel  $\frb_{\frc}$ at 140 85
\pinlabel  $T_2$ at 140 5
\pinlabel  $p$ at 275 80
\pinlabel  $T_g(p)$ at 430 5
\endlabellist
    \centering
    \includegraphics[scale=0.7]{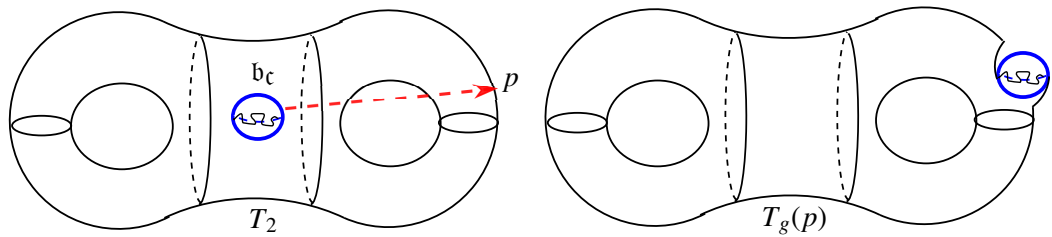}
   \caption{Construction of Heegaard surface $T_g(p)$} \label{fig:tgp}
    \end{figure}
    
     Let $\gamma \subset T_2$ be an embedded arc with end points $p, q \in T_2$.  An easy and classical argument (echoing that just before Proposition \ref{prop:Ginclude}) gives a homeomorphism $(S^3, T_2, p) \to (S^3, T_2, q)$ with support in a ball neighborhood of $\gamma$, basically a push within that ball, see \cite[Homogeneity Lemma]{Mi}.  Replacing ball neighborhoods of $p$ and $q$ respectively by copies of $\frb_{\frc}$ the homeomorphism gives a natural homeomorphism of pairs $\gamma^q_p: (S^3, T(p)) \to (S^3, T(q))$.  Here are some informal observations, which we collect as a multipart Lemma:
\begin{lemma} \label{lemma:gammapq} For $\gamma$ as above:
\begin{enumerate} 
\item If $\gamma' \subset T_2$ is an embedded arc isotopic rel end points in $T_2$ to $\gamma$ then 
    ${\gamma'}^q_p: (S^3, T(p)) \to (S^3, T(q))$ is isotopic as a map of pairs to $\gamma^q_p$
    %: (S^3, T(p)) \to (S^3, T(q))$
, a relation we denote ${\gamma'}^q_p\sim \gamma^q_p$.  
\item $ \gamma^p_q \gamma^q_p \sim identity: (S^3, T(p)) \to (S^3, T(p))$.
\item If $\gamma' \subset T_2$ is an embedded arc in $T$ with ends at points $q$ and $r$ and with interior disjoint from $\gamma$ then ${\gamma'}^r_q \gamma^q_p\sim (\gamma \cup \gamma')^r_p: (S^3, T(p)) \to (S^3, T_r)$.  

\item Suppose, as just described, $\gamma' \subset T_2$ is an embedded arc in $T$ with ends at points $q$ and $r \neq p$, but with the interior of $\gamma'$ {\em not necessarily} disjoint from $\gamma$.  Observe that then $\gamma$ can be isotoped in $T_2$, rel end points, to be disjoint from $\gamma'$ by piping points of $\gamma \cap \gamma'$ to, and then beyond, the end of $\gamma'$ at $r$.  This has no effect (up to isotopy of pairs) on $\gamma^q_p$ so, following (3), we may as well {\em define} $(\gamma \cup \gamma')^r_p: (S^3, T(p)) \to (S^3, T_r)$ as $ {\gamma'}^r_q \gamma^q_p$.  Note that the arc $  \gamma \cup \gamma'$ is no longer necessarily embedded.

\item Generalizing (4), suppose $\gamma \subset T_2$ is a {\em not necessarily embedded} arc with end points $p, q \in T_2$, in general position. (General position will typically be assumed in arguments below.) Then $\gamma^q_p: (S^3, T(p)) \to (S^3, T(q))$ can be well-defined (up to isotopy of pairs) by subdividing $\gamma$ into a finite collection of arcs, each of which is embedded, and setting $\gamma^q_p$ to be the corresponding composition of homeomorphisms given by the embedded arcs.

\item Suppose $\gamma'$ is another (general position) arc with end points $p$ and $q$ and $\gamma'$ is merely {\em homotopic} to $\gamma$ rel end points.  Then, after subdividing $\gamma'$ sufficiently, as in (5), the homotopy can be realized as a sequence of isotopies of subarcs of $\gamma'$.  It follows that $\gamma'^q_p \sim \gamma^q_p$.
\item if $\gamma$ is a loop in $T$ based at $p$ then $\gamma^p_p$ is a generic bubble move around the loop.\qed
\end{enumerate}
\end{lemma}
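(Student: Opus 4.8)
The plan is to deduce all seven parts from the classical theory of point-pushing homeomorphisms on a surface (the Birman exact sequence; see \cite[Chapter 4]{FM}), transported to the present bubble setting. The single structural fact to isolate at the outset is this: for an embedded arc $\gamma\subset T_2$ from $p$ to $q$, the homeomorphism $(S^3,T_2,p)\to(S^3,T_2,q)$ of the Homogeneity Lemma may be realized as the time-one map of an ambient isotopy supported in any prescribed regular ball neighborhood $N$ of $\gamma$ that drags a small ball neighborhood of $p$ rigidly along $\gamma$ to one of $q$; and, having chosen such an isotopy, the map is determined up to isotopy rel $\bdd N\cup\{p,q\}$ by the requirement that it carry $p$ to $q$. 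Replacing those two moving balls by the bubble $\frb_{\frc}$ is compatible with all such isotopies, so the induced map $\gamma^q_p:(S^3,T(p))\to(S^3,T(q))$ inherits the same existence and uniqueness. I would record this first, then treat the parts in turn.

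Parts (1)--(3) are then immediate. For (1): if $\gamma'$ is isotopic to $\gamma$ rel endpoints in $T_2$, then after shrinking we may take $\gamma'\subset N$ for a ball neighborhood $N$ of $\gamma$ as above, so ${\gamma'}^q_p$ and $\gamma^q_p$ are both supported in $N$, agree near $\bdd N$ and near the two bubbles, and carry $p$ to $q$; the uniqueness statement then gives ${\gamma'}^q_p\sim\gamma^q_p$. For (2): realize $\gamma^q_p$ as the time-one map of an isotopy dragging a ball along $\gamma$, and $\gamma^p_q$ as the time-one map of the reverse isotopy; the composite $\gamma^p_q\gamma^q_p$ is then literally the identity, and by (1) this conclusion is independent of the choices. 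For (3): when $\gamma$ (from $p$ to $q$) and $\gamma'$ (from $q$ to $r$) meet only at $q$, the union $\gamma\cup\gamma'$ is an embedded arc with a ball neighborhood containing the supports of both pushes, and concatenating the two defining isotopies exhibits ${\gamma'}^r_q\gamma^q_p$ as the push along $\gamma\cup\gamma'$, i.e.\ ${\gamma'}^r_q\gamma^q_p\sim(\gamma\cup\gamma')^r_p$.

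The remaining parts are essentially definitional and need only consistency checks. For (4): piping $\gamma$ across $\gamma'$ and past the endpoint $r$ is an isotopy of $\gamma$ in $T_2$ rel its endpoints $p,q$, so by (1) it does not alter $\gamma^q_p$ up to isotopy of pairs; (3) then applies to the resulting disjoint arcs, and we adopt the displayed formula as the definition. For (5): any two subdivisions of a general-position arc into embedded subarcs admit a common refinement; inserting one extra subdivision point into a subarc $\delta$ replaces its push by a two-term composite that (3) shows to be isotopic to the push along $\delta$, while sliding that point along $\delta$ changes nothing by (1), so the composition $\gamma^q_p$ is independent of the chosen subdivision. For (6): a homotopy rel endpoints from $\gamma$ to $\gamma'$ can, after a sufficiently fine subdivision of the parameter square, be written as a finite concatenation of moves each supported in a disk in $T_2$ and carrying one subarc to another with the same endpoints; inside a disk such a move is an isotopy of a subarc rel its endpoints, so by (1) together with the subdivision recipe of (5) it preserves $\gamma^q_p$, whence ${\gamma'}^q_p\sim\gamma^q_p$. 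Finally (7): when $\gamma$ is a loop based at $p$, $\gamma^p_p$ is by construction the result of dragging a ball neighborhood of $p$ once around $\gamma$ and then restoring the bubble, which under the identification of that ball with $\frb_{\frc}$ is exactly an isotopy of $\frb_{\frc}$ through the loop $\gamma$ in $T(p)-\frb_{\frc}$ returning $\frb_{\frc}$ to itself --- i.e.\ a generic bubble move around $\gamma$.

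I expect the only genuinely delicate step to be (6), namely making precise that a homotopy of arcs rel endpoints decomposes into a finite sequence of isotopies of subarcs; this is exactly the mechanism by which point-pushing descends to $\pi_1$ in the Birman exact sequence, so in a pinch one could instead invoke that result directly. Everything else is bookkeeping, with the single standing caveat --- already addressed in the paragraph preceding the lemma --- that the normal orientations used to match the two circles $\bdd\frb_{\frc}\cap T_2$ and $\bdd\frb_{\frc}\cap T_{g-2}$ are unambiguous because the space of oriented great circles on $S^2$ is simply connected, so no framing ambiguity affects any of the above.
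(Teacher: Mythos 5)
Your proposal is correct and follows exactly the argument the paper intends: the paper states this lemma as a list of ``informal observations'' with no separate proof, the justifications being implicit in the construction of $\gamma^q_p$ via the Homogeneity Lemma (push supported in a ball neighborhood of the arc), concatenation and subdivision, and the realization of a homotopy rel endpoints as a sequence of isotopies of subarcs --- which is precisely the mechanism you invoke, including the framing caveat about oriented great circles that the paper itself records. Your elaboration, in particular the explicit uniqueness-up-to-isotopy statement for the push in a prescribed ball and the common-refinement argument for (5), is a faithful filling-in of those details rather than a different route.
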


To recapitulate and emphasize: following Lemma \ref{lemma:gammapq}(5), $\gamma^q_p: (S^3, T(p)) \to (S^3, T(q))$ can be defined even when the arc $\gamma$ is not embedded and, per Lemma \ref{lemma:gammapq}(6), $\gamma^q_p$ is determined (up to isotopy of pairs) by the homotopy class of $\gamma$ rel its endpoints.

\begin{lemma} \label{lemma:homeocommute}  Suppose $p, q \in T_2$, $\psi: (S^3, T_2) \to (S^3, T_2)$ is a homeomorphism, with \[\psi_p: (S^3, T(p)) \to (S^3, T({\psi(p)})) \quad and \quad \psi_q: (S^3, T(q)) \to (S^3, T({\psi(q)}))\] the associated homeomorphism of genus $g$ splittings.  Let $\gamma \subset T_2$ be an arc whose endpoints are $p$ and $q$, so $\psi(\gamma)$ is an arc whose endpoints are  $\psi(p)$ and $\psi(q)$.   Then 
\[ \psi_p \gamma^p_q \sim (\psi(\gamma))^{\psi(p)}_{\psi(q)} \psi_q   :T(q) \to T({\psi(p)}) \]
\end{lemma}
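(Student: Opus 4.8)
The plan is to realize both composites in the claim from a single ambient homeomorphism of $(S^3,T_2)$, via the conjugation identity $h'=\psi\,h\,\psi^{-1}$. To set this up I would first unwind the two constructions at the level of $(S^3,T_2)$. By the Homogeneity Lemma \cite{Mi} (used in the same way just before Proposition~\ref{prop:Ginclude} and throughout Section~\ref{sect:firstobs}), pick an ambient homeomorphism $h\colon(S^3,T_2)\to(S^3,T_2)$ supported in a regular neighborhood $N$ of $\gamma$ with $h(q)=p$; then $\gamma^p_q\colon(S^3,T(q))\to(S^3,T(p))$ is, up to isotopy of pairs, the map obtained from $h$ by deleting a small ball at $q$ in the domain and at $p$ in the range and inserting copies of $\frb_{\frc}$ with their boundary circles matched as in the definition of $T(\cdot)$. (That matching is canonical up to isotopy because the space of oriented great circles of a $2$-sphere is itself $S^2$, hence simply connected, which is exactly the ambiguity that Lemma~\ref{lemma:gammapq}(1) controls.) Likewise $\psi_p$ agrees with $\psi$ outside the bubble $\frb_{\frc}$ at $p$, and on that bubble it is the canonical identification of $\psi(\frb_{\frc})$ with the standardly placed copy of $\frb_{\frc}$ at $\psi(p)$. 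Composing, and noting that the bubble that $\gamma^p_q$ inserts at $p$ is simply carried along by $\psi_p$, one sees that $\psi_p\,\gamma^p_q$ is, up to isotopy of pairs, the map $(S^3,T(q))\to(S^3,T(\psi(p)))$ obtained from the ambient homeomorphism $\psi\circ h$ by bubble surgery at $q$ in the domain and at $\psi(p)$ in the range.

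Next I would use the freedom in choosing the ambient push that defines $(\psi(\gamma))^{\psi(p)}_{\psi(q)}$. Set $h':=\psi\,h\,\psi^{-1}$. Since $\psi$ carries $N$ to a regular neighborhood of $\psi(\gamma)$ and $h'(\psi(q))=\psi(h(q))=\psi(p)$, the homeomorphism $h'$ is a legitimate choice of ambient push along $\psi(\gamma)$, so $(\psi(\gamma))^{\psi(p)}_{\psi(q)}$ is, up to isotopy of pairs, obtained from $h'$ by bubble surgery at $\psi(q)$ and $\psi(p)$. Hence $(\psi(\gamma))^{\psi(p)}_{\psi(q)}\circ\psi_q$ is obtained from the ambient homeomorphism $h'\circ\psi=\psi\,h\,\psi^{-1}\circ\psi=\psi\circ h$ by bubble surgery at $q$ in the domain and $\psi(p)$ in the range --- literally the same ambient homeomorphism and the same surgery data that produced $\psi_p\,\gamma^p_q$. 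Since isotopy of pairs is compatible with composition, the two sides agree up to isotopy of pairs, which is the assertion.

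I expect the one delicate point to be the bookkeeping concealed in the word ``literally'': one must check that the ball removed and the copy of $\frb_{\frc}$ inserted at $\psi(p)$ when forming $\psi_p$ (out of the $p$-end of $\gamma^p_q$) may be taken, after an isotopy supported near $\psi(p)$, to coincide with the ball and bubble used at the $\psi(p)$-end of $(\psi(\gamma))^{\psi(p)}_{\psi(q)}$, and similarly that the $q$-end data agree for both composites. Both reductions are instances of the independence of choices recorded in Lemma~\ref{lemma:gammapq}(1) together with the simple-connectivity remark above --- composing two standard matchings of bubble boundary circles is again a standard matching, with no monodromy. Everything else is formal; no input beyond the Homogeneity Lemma and Lemma~\ref{lemma:gammapq} is required.
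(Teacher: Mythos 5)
Your argument is essentially the paper's own: both proofs observe that $(\psi(\gamma))^{\psi(p)}_{\psi(q)}$ may be taken to be the conjugate by $\psi$ (equivalently $\psi_q$ on the support) of $\gamma^p_q$, so that both composites arise from the single ambient map $\psi\circ h$ and agree identically off the support. The only thing you omit is that the lemma, per the conventions of Lemma \ref{lemma:gammapq}(5)--(6), is meant to hold for general-position arcs that need not be embedded (where no regular neighborhood push exists); the paper handles this by subdividing $\gamma$ into embedded subarcs and inducting, a formal step you should add since your Homogeneity-Lemma construction applies only to the embedded case.
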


 \begin{figure}[ht!]
  \labellist
\small\hair 2pt
\pinlabel  $\frb_{\frc}@q$ at 140 145
\pinlabel  $\psi_q$ at 140 95
\pinlabel  $\psi_p$ at 470 95
\pinlabel $\gamma$ at 125 185
\pinlabel  $\psi(\gamma)$ at 165 55
\pinlabel  $(\psi(\gamma))^{\psi(p)}_{\psi(q)}$ at 300 40
\pinlabel  $\gamma^p_q$ at 290 170
\pinlabel  $p$ at 110 200
\pinlabel  $\frb_{\frc}@\psi(q)$ at 190 15
\pinlabel  $\frb_{\frc}@p$ at 400 210
\pinlabel  $\frb_{\frc}@\psi(p)$ at 520 82
\pinlabel  $T(\psi(p))$ at 430 15
\pinlabel  $T(q)$ at 200 140
\endlabellist
    \centering
    \includegraphics[scale=0.7]{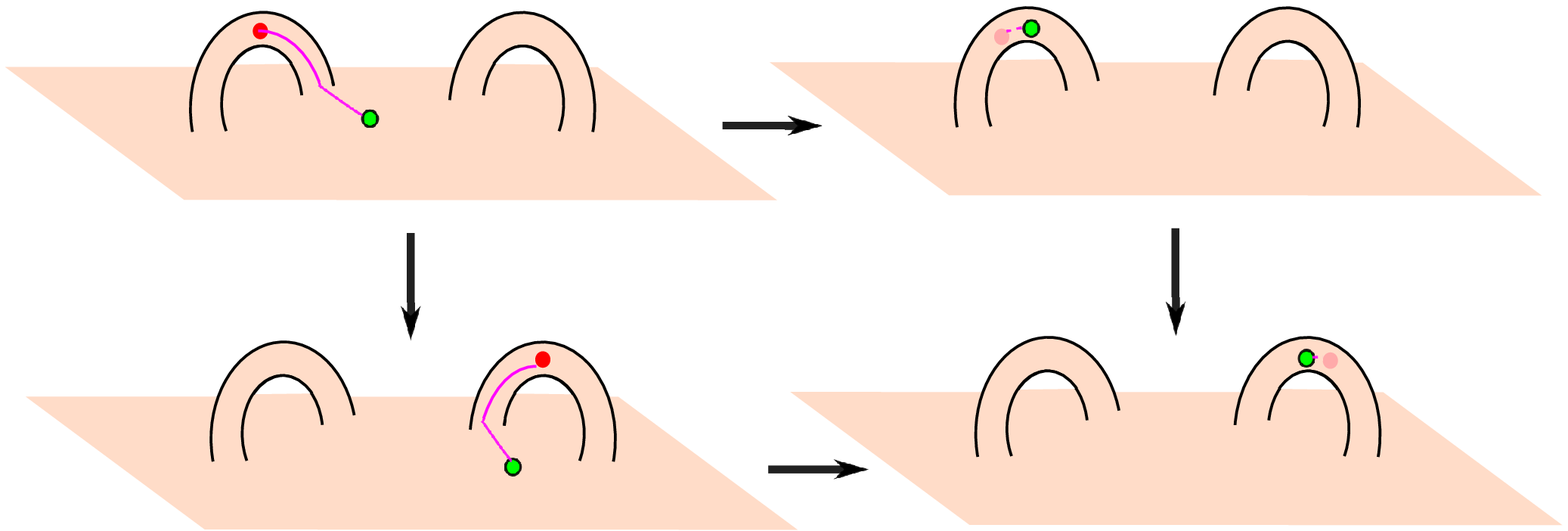}
   \caption{$\psi_p \gamma^p_q \sim (\psi(\gamma))^{\psi(p)}_{\psi(q)} \psi_q   :T(q) \to T({\psi(p)})$} \label{fig:homeocommute}
    \end{figure}

    \begin{proof}  %This follows easily from the definitions.  
    Suppose first that $\gamma$ is an embedded arc.  Let $U$ be the ball neighborhood of $\gamma$ in $S^3$ in which the support of $\gamma^p_q$ lies, including all of the genus $g-2$ bubble $\frb_{\frc}$ placed at $q$.  Then $\psi_q(U)$ contains the support of $(\psi(\gamma))^{\psi(p)}_{\psi(q)}$ as well as the genus $g-2$ bubble $\frb_{\frc}$ placed at  the point $\psi(q)$ by $\psi$.  
    
    On $\psi_q(U)$ we may as well take $(\psi(\gamma))^{\psi(p)}_{\psi(q)}$ to be the topological conjugate (by $\psi_q|U$) of $\gamma^p_q$. In this case automatically  $\psi_p \gamma^p_q (x) = (\psi(\gamma))^{\psi(p)}_{\psi(q)} \psi_q (x)$ for $x \in U$.   On the other hand, for $x \in S^3 - U$, so $\psi(x) \in S^3 - \psi(U)$ 
    \[ \psi_p \gamma^p_q (x) = \psi(x) = (\psi(\gamma))^{\psi(p)}_{\psi(q)} \psi_q (x),\]
    completing the proof in this case.  
    
    In the general case we follow the logic of Lemma \ref{lemma:gammapq}(5).  For any (general position) arc $\alpha \subset T$ let $s(\alpha) \geq 1$ be the minimum number of subarcs into which $\alpha$ needs to be broken, so that each subarc is embedded.  We have just shown that the lemma is true if $\gamma$ is embedded, that is if $s(\gamma) = 1$.  If $s(\gamma) \geq 2$ suppose, inductively, the Lemma is known for all arcs $\alpha$ for which $s(\alpha) \leq s(\gamma) - 1$.  Then by definition there is a point $r \in \inter(\gamma)$ breaking $\gamma$ into the concatenation of arcs $\gamma_1, \gamma_2$ in which $s(\gamma_1) = 1$ and $s(\gamma_2) = s(\gamma) - 1$.  In particular, the Lemma is true for each of $\gamma_1$ and $\gamma_2$.  
Furthermore, by Lemma \ref{lemma:gammapq}, 
\[\gamma^p_q \sim (\gamma_2)^p_r (\gamma_1)^r_q \quad \text{and} \quad (\psi(\gamma_2))^{\psi(p)}_{\psi(r)}(\psi(\gamma_1))^{\psi(r)}_{\psi(q)} \sim
  (\psi(\gamma))^{\psi(p)}_{\psi(q)}.\]  Thus we have, applying the Lemma twice, 

   \begin{align*}
 \psi_p \gamma^p_q \sim 
 \psi_p \;((\gamma_2)^p_r (\gamma_1)^r_q ) &= 
 (\psi_p {\gamma_2}^p_r )\;(\gamma_1)^r_q  \sim 
( (\psi(\gamma_2))^{\psi(p)}_{\psi(r)}\psi_r) \;(\gamma_1)^r_q  =\\
 (\psi(\gamma_2))^{\psi(p)}_{\psi(r)}\;(\psi_r {\gamma_1}^r_q)  &\sim
(\psi(\gamma_2))^{\psi(p)}_{\psi(r)}(\psi(\gamma_1))^{\psi(r)}_{\psi(q)}\psi_q \sim
  (\psi(\gamma))^{\psi(p)}_{\psi(q)} \psi_q,
\end{align*}
    as required.
 \end{proof}
 
 The discussion above gives one way of moving from a homeomorphism $\psi: (S^3, T_2) \to (S^3, T_2)$ and a point $p \in T_2$ to a homeomorphism $(S^3, T(p)) \to (S^3, T(p))$:  choose an arc $\gamma \subset T_2$ whose endpoints are $p$ and $\psi^{-1}(p)$ and define
 \[ \psi_{\gamma} \equiv \psi_{\psi^{-1}(p)} \gamma_p^{\psi^{-1}(p)}:(S^3, T(p)) \to (S^3, T(p)).\] 
 Per Lemma \ref{lemma:homeocommute} this is equivalent to $[\psi(\gamma)]^p_{\psi(p)} \psi_p$.  
 
 Of course the homeomorphism $\phi_{\gamma}$ just defined requires a choice of $\gamma$, at least up to homotopy, per Lemma \ref{lemma:gammapq}(6).  We intend to show that, under restricted conditions, $\psi_{\gamma}$ can be made at least somewhat independent from the choice of $\gamma$.  To be specific, if $\gamma' \subset T_2$ is a different arc with endpoints $p$ and $\psi^{-1}(p)$ then, per  Lemma \ref{lemma:gammapq}(2),
 \[\psi_{\gamma} = \psi_{\psi^{-1}(p)} \gamma_p^{\psi^{-1}(p)} \sim\psi_{\psi^{-1}(p)} \;[(\gamma')_p^{\psi^{-1}(p)}(\gamma')^p_{\psi^{-1}(p)}]\;\gamma_p^{\psi^{-1}(p)}) \sim \psi _{\gamma'}[(\gamma')^p_{\psi^{-1}(p)}\gamma_p^{\psi^{-1}(p)})].\]

Per Lemma \ref{lemma:gammapq}(3,7), the last terms $(\gamma')^p_{\psi^{-1}(p)}\gamma_p^{\psi^{-1}(p)}$ describe a generic bubble move of $\frb_{\frc}$ around a loop in $T_2$.  If it were known that such a bubble move is Powell, then $\psi_{\gamma}$ and $\psi_{\gamma'}$ would be Powell equivalent, a potentially useful outcome.   In the next section we describe a special circumstance in which this is true.

\section{Towards a useful function $\Dih_6 \to \calP_{g}\bs\calG_g/\calP_g$}

Recall $\kappa \subset T_2$ is the union of the six circles that are the boundaries of the labeled meridians.  

\begin{defin} \label{defin:semistand}  Suppose $p \in T_2 - \kappa$ and  $h:(S^3, T_g) \to (S^3, T(p))$ is a homeomorphism of pairs so that the image of the standard orthogonal pair $(a_g, b_g)$ of meridian disks in the standard bubble $\frb_g$ is sent to a labeled orthogonal pair of disks $(a, b)$ in $(S^3, T(p))$.  Then $h$ defines a {\em semi-standard} structure on $T(p)$.  
\begin{itemize} 
\item The pair $(a, b)$ is the associated {\em standard pair} of labeled meridians, determining the {\em standard bubble} in $T(p)$.  $\bdd a \cup \bdd b \subset \kappa$ is the {\em standard pair $\kappa_s$} of circles in $\kappa$.

\item The orthogonal pair of labeled meridian disks $(x, y)$ in $(S^3, T(p))$ that is disjoint from $(a, b)$ is the {\em forbidden pair} of labeled meridians. $\bdd x \cup \bdd y \subset \kappa$ is the forbidden pair $\kappa_f$ of circles in $\kappa$.

\item Let $\calG(p)$ denote the topological conjugate of $\calG_g$ by $h$, for example: 

\item The Powell subgroup $\calP(p) \subset \calG(p)$ is the topological conjugate of $\calP_g \subset \calG_g$ by $h$.  That is,
 \[\psi: (S^3, T(p)) \to (S^3, T(p)) \in \calP(p) \iff h^{-1}\psi h: (S^3, T_g) \to (S^3, T_g) \in \calP_g.\]
 \end{itemize}
\end{defin}

\begin{prop} \label{prop:G/P}  Suppose $p \in T_2 - \kappa$, and $T(p)$ has a semi-standard structure, with $\kappa_f$ the forbidden pair of circles in $\kappa$.  Suppose further that $\psi \in \Dih_6$ and $\gamma_1, \gamma_2$ are arcs in $T_2 - \kappa_f$, each with one end-point on $p$ and the other end point on $\psi^{-1}(p)$.  Then 
\[\psi_{\gamma_1} \calP(p) = \psi_{\gamma_2}\calP(p)\]
%\[\psi_{\psi^{-1}(p)} {\gamma_1}_p^{\psi^{-1}(p)} \calP(p) = \psi_{\psi^{-1}(p)} {\gamma_2}_p^{\psi^{-1}(p)}\calP(p)\]
\end{prop}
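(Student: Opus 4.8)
The plan is to reduce the statement to Corollary~\ref{cor:eyeglass3o}. Recall the identity derived at the close of Section~\ref{sect:firstobs}: for arcs $\gamma_1,\gamma_2$ as in the statement,
$$\psi_{\gamma_1}\;\sim\;\psi_{\gamma_2}\bigl[(\gamma_2)^{p}_{\psi^{-1}(p)}(\gamma_1)^{\psi^{-1}(p)}_{p}\bigr],$$
and by Lemma~\ref{lemma:gammapq}(3,7) the bracketed homeomorphism is the generic bubble move $\delta^{p}_{p}$ of $\frb_{\frc}$ in $T(p)$ around the loop $\delta=\gamma_1\ast\overline{\gamma_2}$ based at $p$; since $\gamma_1,\gamma_2\subset T_2-\kappa_f$, this loop lies in $T_2-\kappa_f$. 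Hence it suffices to show that the bubble move of $\frb_{\frc}$ around \emph{any} loop in $T_2-\kappa_f$ based at $p$ lies in $\calP(p)$: then $\delta^{p}_{p}\in\calP(p)$ and $\psi_{\gamma_1}\calP(p)=\psi_{\gamma_2}\,\delta^{p}_{p}\,\calP(p)=\psi_{\gamma_2}\calP(p)$. Moreover, by Lemma~\ref{lemma:gammapq}, $\delta^{p}_{p}$ depends only on the class of $\delta$ in $\pi_1(T_2-\kappa_f,p)$, and a concatenation of loops produces the corresponding composition of bubble moves, so it is enough to check this on a generating set of $\pi_1(T_2-\kappa_f,p)$.

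The crux is to identify $T_2-\kappa_f$. By Lemma~\ref{lemma:Kdef}(3) the forbidden pair $\kappa_f=\bdd x\cup\bdd y$ consists of the meridian and longitude circles of a genus one bubble, meeting transversally once, so $\kappa_f$ is an embedded figure eight whose regular neighborhood in $T_2$ is a once-punctured torus with separating boundary; thus $T_2-\kappa_f$ deformation retracts to a once-punctured torus $\Sigma$. The circles $\bdd a,\bdd b$ of the standard pair lie in $\kappa_s$, which is disjoint from $\kappa_f$ (distinct orthogonal pairs of labeled meridians have disjoint boundaries, by Lemma~\ref{lemma:Kdef}(2)), and they meet transversally once; each is essential in $\Sigma$, since a curve crossed exactly once by another curve can neither bound a disk nor be boundary-parallel. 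Two essential simple closed curves meeting once in a once-punctured torus form a basis of its fundamental group (\cite[Chapter~1]{FM}), so $\bdd a$ and $\bdd b$ generate $\pi_1(\Sigma)$. By Lemma~\ref{lemma:Kdef}(4) the complement $T_2-(\kappa_s\cup\kappa_f)$ of the two orthogonal pairs is connected, so I may pick an embedded arc $\alpha$ from $p$ to the point of $\bdd a\cap\bdd b$ with $\inter(\alpha)$ disjoint from $\kappa_s\cup\kappa_f$. Then the loops $\overline{\alpha}(\bdd a)\alpha$ and $\overline{\alpha}(\bdd b)\alpha$ are based loops at $p$ in $T_2-\kappa_f$, conjugation by $\alpha$ carries the basis $\{\bdd a,\bdd b\}$ of $\pi_1(\Sigma)$ onto them, so they generate $\pi_1(T_2-\kappa_f,p)$; consequently $\delta^{p}_{p}$ is a composition of the bubble moves $(\overline{\alpha}(\bdd a)\alpha)^{p}_{p}$, $(\overline{\alpha}(\bdd b)\alpha)^{p}_{p}$ and their inverses.

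It remains to show these two bubble moves lie in $\calP(p)$. Let $h:(S^3,T_g)\to(S^3,T(p))$ be the homeomorphism giving the semi-standard structure of Definition~\ref{defin:semistand}, so $h(\frb_g)$ is the standard bubble of $T(p)$ with meridian/longitude disks $(a,b)$ and $h(\calP_g)=\calP(p)$. Because $p\in T_2-\kappa$ lies off $\bdd a\cup\bdd b$, we may isotope the disks $a,b$ rel boundary (or take $\frb_{\frc}$ small enough) so that $\frb_{\frc}$ is disjoint from $a\cup b$, as $\frb_{\frc}\cap T(p)$ is a small neighborhood of $p$ missing $\bdd a\cup\bdd b$. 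Then $h^{-1}(\frb_{\frc})$ is a generic bubble disjoint from $\mu_g\cup\lambda_g$, and $h^{-1}(\alpha)$ is an embedded arc from $\bdd h^{-1}(\frb_{\frc})$ to $a_g$ (resp.\ $b_g$) with interior disjoint from $a_g\cup b_g$. Corollary~\ref{cor:eyeglass3o} then gives that the bubble move of $h^{-1}(\frb_{\frc})$ around $\overline{h^{-1}(\alpha)}\,a_g\,h^{-1}(\alpha)$ (resp.\ with $b_g$) is a Powell move in $\calG_g$; conjugating by $h$ puts $(\overline{\alpha}(\bdd a)\alpha)^{p}_{p}$ (resp.\ $(\overline{\alpha}(\bdd b)\alpha)^{p}_{p}$) in $\calP(p)$. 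Since $\calP(p)$ is a subgroup it contains $\delta^{p}_{p}$, completing the proof.

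The step I expect to be the main obstacle is the middle one: recognizing $T_2-\kappa_f$ as a once-punctured torus in which $\bdd a$ and $\bdd b$ form a basis of $\pi_1$, and then using the connectivity of Lemma~\ref{lemma:Kdef}(4) to realize that basis by loops of exactly the lollipop form $\overline{\gamma}\,a_i\,\gamma$ required by Corollary~\ref{cor:eyeglass3o}. This is precisely where the hypothesis $\gamma_1,\gamma_2\subset T_2-\kappa_f$ and the choice of the forbidden pair $\kappa_f$ do their work; the opening reduction and the closing transport by $h$ are routine manipulations with Lemma~\ref{lemma:gammapq} and the definitions.
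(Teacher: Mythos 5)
Your proposal is correct and follows essentially the same route as the paper's proof: reduce to showing that the bubble move of $\frb_{\frc}$ around the loop $\gamma_1 \ast \overline{\gamma_2}$ in $T_2 - \kappa_f$ lies in $\calP(p)$, observe that $T_2 - \kappa_f$ is a punctured torus with spine $\kappa_s$ so its fundamental group is generated by $\bdd a$ and $\bdd b$, and then invoke Corollary \ref{cor:eyeglass3o} for bubble moves around each generator. Your write-up simply supplies more detail (the explicit basing arc $\alpha$ and the transport by $h$) than the paper's terser version.
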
  

\begin{proof} As noted at the end of Section \ref{sect:firstobs},
%Since
%\[\psi_{\psi^{-1}(p)}{\gamma_1}_p^{\psi^{-1}(p)} \;[ {\gamma_1}^p_{\psi^{-1}(p)} {\gamma_2}_p^{\psi^{-1}(p)}]=\psi_{\psi^{-1}(p)} {\gamma_2}_p^{\psi^{-1}(p)}\]
it suffices to show that ${\gamma_1}^p_{\psi^{-1}(p)} {\gamma_2}_p^{\psi^{-1}(p)} \in \calP(p)$ and  
Lemma \ref{lemma:gammapq}(3,7) shows that ${\gamma_1}^p_{\psi^{-1}(p)} {\gamma_2}_p^{\psi^{-1}(p)}$ is a bubble move of $\frb_{\frc}$ around a loop in $T_2 - \kappa_f$.  On the other hand, $T_2 - \kappa_f$ is a punctured torus with spine the standard pair of circles $\kappa_s$, so $\pi_1(T_2 - \kappa_f) \cong \pi_1(\kappa_s) \cong \zed * \zed$, generated by $\pi_1(\bdd a)$ and $\pi_1(\bdd b)$.  Hence ${\gamma_1}^p_{\psi^{-1}(p)} {\gamma_2}_p^{\psi^{-1}(p)}$ is homotopic to a sequence of bubble moves, each one around either $\bdd a$ or $\bdd b$.  According to Corollary \ref{cor:eyeglass3o} each such bubble move is a Powell move.
\end{proof}

\begin{defin} \label{defin:thetap}  Suppose $p \in T_2 - \kappa$, and $T(p)$ has a semi-standard structure, with $\kappa_f$ the forbidden pair of circles in $\kappa$. 

Let $\Theta_p: \Dih_6 \to \calP(p) \bs\calG(p)/\calP(p)$ be the function given by $\Theta_p(\psi) = \calP(p) \psi_{\psi^{-1}(p)} {\gamma}_p^{\psi^{-1}(p)} \calP(p)$, for $\gamma$ any path in $T_2 - \kappa_f$ that has end points at $p$ and $\psi^{-1}(p)$.  
\end{defin}

$\Theta_p$ is well-defined by Proposition \ref{prop:G/P}.  (Note that it is well-defined even as a function to $\calG(p)/\calP(p)$; the need to retreat to double cosets is explained before Corollary \ref{cor:PpPq}.)

\begin{lemma} \label{lemma:fixedp} If $p \in T_2 - \kappa$ is a fixed point for $\psi \in \Dih_6$ then $\Theta_p(\psi) = \calP(p) \psi_p  \calP(p)$.
\end{lemma}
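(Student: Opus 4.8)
The plan is to unwind the definition of $\Theta_p(\psi)$ in the special case where $p$ is a fixed point of $\psi$, and show that the natural choice of path can be taken to be trivial. By Definition \ref{defin:thetap}, $\Theta_p(\psi) = \calP(p)\,\psi_{\psi^{-1}(p)}\,\gamma_p^{\psi^{-1}(p)}\,\calP(p)$ for any path $\gamma \subset T_2 - \kappa_f$ from $p$ to $\psi^{-1}(p)$, and Proposition \ref{prop:G/P} guarantees this double coset is independent of that choice. Since $\psi(p) = p$, we have $\psi^{-1}(p) = p$, so the endpoints of $\gamma$ coincide; the most economical choice is the constant path $\gamma = p$ itself (a degenerate, but still legitimate, "arc" in $T_2 - \kappa_f$, since $p \notin \kappa \supset \kappa_f$).

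First I would observe that for the constant path $\gamma = p$, the homeomorphism $\gamma_p^{\psi^{-1}(p)} = \gamma_p^p : (S^3, T(p)) \to (S^3, T(p))$ is (isotopic to) the identity: by Lemma \ref{lemma:gammapq}(7) it is a generic bubble move of $\frb_{\frc}$ around the constant loop, which is the trivial loop, and by Lemma \ref{lemma:gammapq}(1) (or (6)) a bubble move around a nullhomotopic loop is isotopic to the identity map of pairs. Hence $\psi_{\psi^{-1}(p)}\,\gamma_p^{\psi^{-1}(p)} = \psi_p \cdot \mathrm{id} \sim \psi_p$, where $\psi_p : (S^3, T(p)) \to (S^3, T(\psi(p))) = (S^3, T(p))$ is the associated homeomorphism of genus $g$ splittings determined by $\psi$ (again using $\psi(p) = p$). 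Substituting back gives $\Theta_p(\psi) = \calP(p)\,\psi_p\,\calP(p)$, which is exactly the claimed formula.

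One point that needs a word of care: the formalism in Section \ref{sect:firstobs} is phrased for honest (possibly non-embedded, but non-degenerate) arcs $\gamma$, whereas the constant path is degenerate. To sidestep any worry, instead of literally using the constant path I would take $\gamma$ to be a small embedded loop based at $p$ that bounds a disk in $T_2 - \kappa_f$ (which exists since $T_2 - \kappa_f$ is an open surface and $p$ lies in its interior). Then $\gamma_p^p$ is a bubble move around a nullhomotopic loop, hence isotopic to the identity by Lemma \ref{lemma:gammapq}(1,7) as above, and the same computation $\psi_{\psi^{-1}(p)}\gamma_p^{\psi^{-1}(p)} \sim \psi_p$ goes through. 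Proposition \ref{prop:G/P} then certifies that the resulting double coset $\calP(p)\psi_p\calP(p)$ agrees with $\Theta_p(\psi)$ computed from any other admissible path.

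The argument is essentially a definition-chase, so there is no serious obstacle; the only thing to be a little careful about is the bookkeeping of base points and the (notational) convention on degenerate arcs, which is why I would use a small nullhomotopic loop rather than the constant path. Everything else follows immediately from Lemma \ref{lemma:gammapq} and the well-definedness supplied by Proposition \ref{prop:G/P}.
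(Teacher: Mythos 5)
Your proposal is correct and is essentially the paper's own argument: the paper's proof is the one-line observation that since $\psi^{-1}(p) = p$ one may take $\gamma_p^{\psi^{-1}(p)}$ to be the identity, which is exactly your choice of (constant or nullhomotopic) path combined with Lemma \ref{lemma:gammapq}. Your extra care about replacing the degenerate constant path with a small nullhomotopic loop is a reasonable bookkeeping refinement but does not change the substance.
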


\begin{proof}  Since $p$ is a fixed point, $\psi^{-1}(p) = p$ and we can take ${\gamma}_p^{\psi^{-1}(p)}$ to be the identity.  
\end{proof}

We do not know that $\calP(p)$ is a normal subgroup of $\calG(p)$, so we cannot say that $\Theta_p$ is a homomorphism.  But we do have this:

\begin{lemma}   \label{lemma:thetaprod} Suppose, for $p \in T_2 - \kappa$, $\Theta_p (\psi_1) = \calP(p)$ and $\Theta_p (\psi_2) = \calP(p)$.  Then $\Theta_p (\psi_2 \psi_1) = \calP(p)$.
\end{lemma}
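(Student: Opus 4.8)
The plan is to unwind the definition of $\Theta_p$ and chain together the two hypotheses, keeping careful track of which base point each homeomorphism of the stabilized splitting is built over. Write $q_i = \psi_i^{-1}(p)$ for $i=1,2$, so that $\psi_i^{-1}(p) = q_i$. By Definition \ref{defin:thetap}, choose arcs $\gamma_i \subset T_2 - \kappa_f$ from $p$ to $q_i$, so that $\Theta_p(\psi_i) = \calP(p)\,\psi_{i,q_i}(\gamma_i)_p^{q_i}\,\calP(p)$, and the hypothesis says each of these double cosets equals $\calP(p)$, i.e.\ $\psi_{i,q_i}(\gamma_i)_p^{q_i} \in \calP(p)$. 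The goal is to show that for a suitable arc $\delta$ from $p$ to $(\psi_2\psi_1)^{-1}(p) = \psi_1^{-1}\psi_2^{-1}(p) = \psi_1^{-1}(q_2)$, the homeomorphism $(\psi_2\psi_1)_{\psi_1^{-1}(q_2)}\,\delta_p^{\psi_1^{-1}(q_2)}$ lies in $\calP(p)$.

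The first key step is to produce a candidate arc $\delta$ and rewrite the composite. The natural choice is $\delta = \gamma_1 \cup \psi_1^{-1}(\gamma_2)$: the first piece runs from $p$ to $q_1$, and $\psi_1^{-1}(\gamma_2)$ runs from $\psi_1^{-1}(p) = q_1$ to $\psi_1^{-1}(q_2)$, so their concatenation is an arc (in general position, after Lemma \ref{lemma:gammapq}(5)) from $p$ to $\psi_1^{-1}(q_2)$. One must check $\delta$ can be taken inside $T_2 - \kappa_f$: $\gamma_1$ already lies there by choice, and $\psi_1^{-1}(\gamma_2)$ lies in $\psi_1^{-1}(T_2 - \kappa_f) = T_2 - \psi_1^{-1}(\kappa_f)$; since $\psi_1 \in \Dih_6$ preserves $\kappa$ (Lemma \ref{lemma:Kdef}(1)) one needs that it also preserves the \emph{forbidden} pair $\kappa_f$ attached to the semi-standard structure — this is where the subtlety lies and I expect it to be the main obstacle (see below). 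Granting that, Proposition \ref{prop:G/P} tells us $\Theta_p(\psi_2\psi_1)$ is independent of which such arc we use, so we are free to compute with $\delta$.

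The second key step is the algebraic manipulation using Lemma \ref{lemma:homeocommute} and Lemma \ref{lemma:gammapq}. Starting from $(\psi_2\psi_1)_{q_1'}\,\delta_p^{q_1'}$ with $q_1' = \psi_1^{-1}(q_2)$, I would first use functoriality of the stabilization construction over composition of homeomorphisms to split $(\psi_2\psi_1)_{q_1'} \sim (\psi_2)_{\psi_1(q_1')}\,(\psi_1)_{q_1'} = (\psi_2)_{q_2}\,(\psi_1)_{q_1'}$, and use Lemma \ref{lemma:gammapq}(3,4) to split $\delta_p^{q_1'} \sim (\gamma_1)_?^{?}\,\big(\psi_1^{-1}(\gamma_2)\big)_?^{q_1'}$ appropriately oriented. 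Then Lemma \ref{lemma:homeocommute} applied with $\psi = \psi_1$ and the arc $\psi_1^{-1}(\gamma_2)$ lets me slide $(\psi_1)$ past the arc term, converting $(\psi_1)\big(\psi_1^{-1}(\gamma_2)\big)_{\cdot}^{\cdot}$ into $(\gamma_2)_{\cdot}^{\cdot}(\psi_1)_{\cdot}^{\cdot}$. After regrouping, the composite should reorganize precisely into $\big[(\psi_2)_{q_2}(\gamma_2)_p^{q_2}\big]\cdot\big[(\psi_1)_{q_1}(\gamma_1)_p^{q_1}\big]$ up to the relation $\sim$ and up to a Powell move coming from Proposition \ref{prop:G/P} (the slack in the arc choice), i.e.\ it equals a product of the two given elements of $\calP(p)$ times an element of $\calP(p)$. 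Since $\calP(p)$ is a subgroup, this product lies in $\calP(p)$, giving $\Theta_p(\psi_2\psi_1) = \calP(p)$ as desired.

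\textbf{Main obstacle.} The delicate point is bookkeeping: the target splitting $T(\psi_1^{-1}(q_2))$ is not literally $T(p)$, so the intermediate homeomorphisms are maps between \emph{different} stabilized splittings, and one must verify that all the identifications of base points along $\gamma_i$ and $\psi_1^{-1}(\gamma_2)$ are compatible, invoking Lemma \ref{lemma:gammapq}(2) to cancel a $\gamma\bar\gamma$ round trip and Proposition \ref{prop:G/P} to absorb the resulting bubble-move correction term into $\calP(p)$. Tied to this is the claim that $\psi_1 \in \Dih_6$ carries the forbidden pair $\kappa_f$ to a forbidden pair (equivalently, that it carries the semi-standard structure on $T(p)$ to one on $T(\psi_1^{-1}(q_2))$ with a forbidden pair disjoint from $\psi_1^{-1}(\gamma_2)$); this should follow from Lemma \ref{lemma:Kdef}(1)--(3) — $\Dih_6$ permutes orthogonal pairs of labeled meridians — but it needs to be stated explicitly, since it is what makes $\delta$ a legal arc and hence lets Proposition \ref{prop:G/P} apply.
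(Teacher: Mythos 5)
Your overall architecture matches the paper's proof exactly: take the product of the two hypothesized elements of $\calP(p)$, use Lemma \ref{lemma:homeocommute} to slide $(\psi_1)_{\psi_1^{-1}(p)}$ past the arc term $[\gamma_2]_p^{\psi_2^{-1}(p)}$, and recognize the result as $(\psi_2\psi_1)_{\psi_1^{-1}\psi_2^{-1}(p)}[\gamma_1\cup\psi_1^{-1}(\gamma_2)]_p^{\psi_1^{-1}\psi_2^{-1}(p)}$. You also correctly identify the one delicate point: whether the concatenated arc lies in $T_2-\kappa_f$, which comes down to whether $\psi_1^{-1}(\gamma_2)$ avoids $\kappa_f$.

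However, the resolution you propose for that point does not work, and this is a genuine gap. You suggest that $\psi_1\in\Dih_6$ ``preserves the forbidden pair $\kappa_f$,'' deducing this from the fact that $\Dih_6$ permutes orthogonal pairs of labeled meridians. But permuting the orthogonal pairs is precisely the problem: $\kappa_f$ is one \emph{specific} orthogonal pair (determined by the fixed semi-standard structure $h$ on $T(p)$), and a typical element of $\Dih_6$ — for instance any $\rho_i$, which transposes two of the $q_j$ — carries it to a \emph{different} orthogonal pair. So $\psi_1(\kappa_f)\neq\kappa_f$ in general, and disjointness of $\gamma_2$ from $\kappa_f$ gives you nothing about disjointness of $\psi_1^{-1}(\gamma_2)$ from $\kappa_f$. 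The paper's fix is different and does not require $\psi_1$ to stabilize anything: since the hypothesis $\Theta_p(\psi_2)=\calP(p)$ holds for \emph{every} legal arc $\gamma_2\subset T_2-\kappa_f$ (by Proposition \ref{prop:G/P}), one is free to choose $\gamma_2$ disjoint from \emph{both} $\kappa_f$ and $\psi_1(\kappa_f)$; this is possible because $T_2-(\kappa_f\cup\psi_1(\kappa_f))$ is connected, which is exactly the content of Lemma \ref{lemma:Kdef}(4) (the part of that lemma you did not cite). With that choice $\psi_1^{-1}(\gamma_2)\subset T_2-\kappa_f$ and the concatenated arc is legal. You should replace the ``$\psi_1$ preserves $\kappa_f$'' claim with this arc-selection argument; the rest of your proof then goes through as in the paper.
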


\begin{proof}  The hypothesis implies that for any paths $\gamma_i \subset T_2 - \kappa_f, i = 1, 2$  with ends at $p$ and $\psi_i^{-1}(p)$ we have $(\psi_i)_{\psi_i^{-1}(p)}\; [\gamma_i]_p^{\psi_i^{-1}(p)} \in \calP(p)$.  This implies
\begin{equation} (\psi_2)_{\psi_2^{-1}(p)}\; [\gamma_2]_p^{\psi_2^{-1}(p)} (\psi_1)_{\psi_1^{-1}(p)}\; [\gamma_1]_p^{\psi_1^{-1}(p)} \in \calP(p).  
\end{equation}

According to Lemma \ref{lemma:homeocommute}, the middle terms \[[\gamma_2]_p^{\psi_2^{-1}(p)} (\psi_1)_{\psi_1^{-1}(p)} \sim (\psi_1)_{\psi_1^{-1}\psi_2^{-1}(p)}\; [\psi_1^{-1}(\gamma_2)]_{\psi_1^{-1}(p)}^{\psi_1^{-1}\psi_2^{-1}(p)}\] so  (1) becomes
\begin{equation} 
(\psi_2)_{\psi_2^{-1}(p)}\;(\psi_1)_{\psi_1^{-1}\psi_2^{-1}(p)}\; [\psi_1^{-1}(\gamma_2)]_{\psi_1^{-1}(p)}^{\psi_1^{-1}\psi_2^{-1}(p)}[\gamma_1]_p^{\psi_1^{-1}(p)} \in \calP(p).  
\end{equation}
See Figure \ref{fig:thetaprod}.  

 \begin{figure}[ht!]
  \labellist
\small\hair 2pt
\pinlabel  $\gamma_1$ at 240 450
\pinlabel  $\gamma_2$ at 280 450
\pinlabel  $p$ at 260 415
\pinlabel  $\psi_1^{-1}(\gamma_2)$ at 160 440
\pinlabel  $\psi_1^{-1}(p)$ at 210 475
\pinlabel  $\psi_2^{-1}(p)$ at 310 475
\pinlabel  $[\gamma_1]^{\psi^{-1}(p)}_p$ at 300 370
\pinlabel  $[\psi_1^{-1}(\gamma_2)]_{\psi_1^{-1}(p)}^{\psi_1^{-1}\psi_2^{-1}(p)}$ at 160 260
\pinlabel  $\psi_1^{-1}\psi_2^{-1}(p)$ at 285 290
\pinlabel  $(\psi_1)_{\psi_1^{-1}(p)}$ at 360 260
\pinlabel  $(\psi_1)_{\psi_1^{-1}\psi_2^{-1}(p)}$ at 160 140
\pinlabel  $[\gamma_2]_p^{\psi_2^{-1}(p)}$ at 430 140
\pinlabel  $(\psi_2)_{\psi_2^{-1}(p)}$ at 280 25
\endlabellist
    \centering
    \includegraphics[scale=0.7]{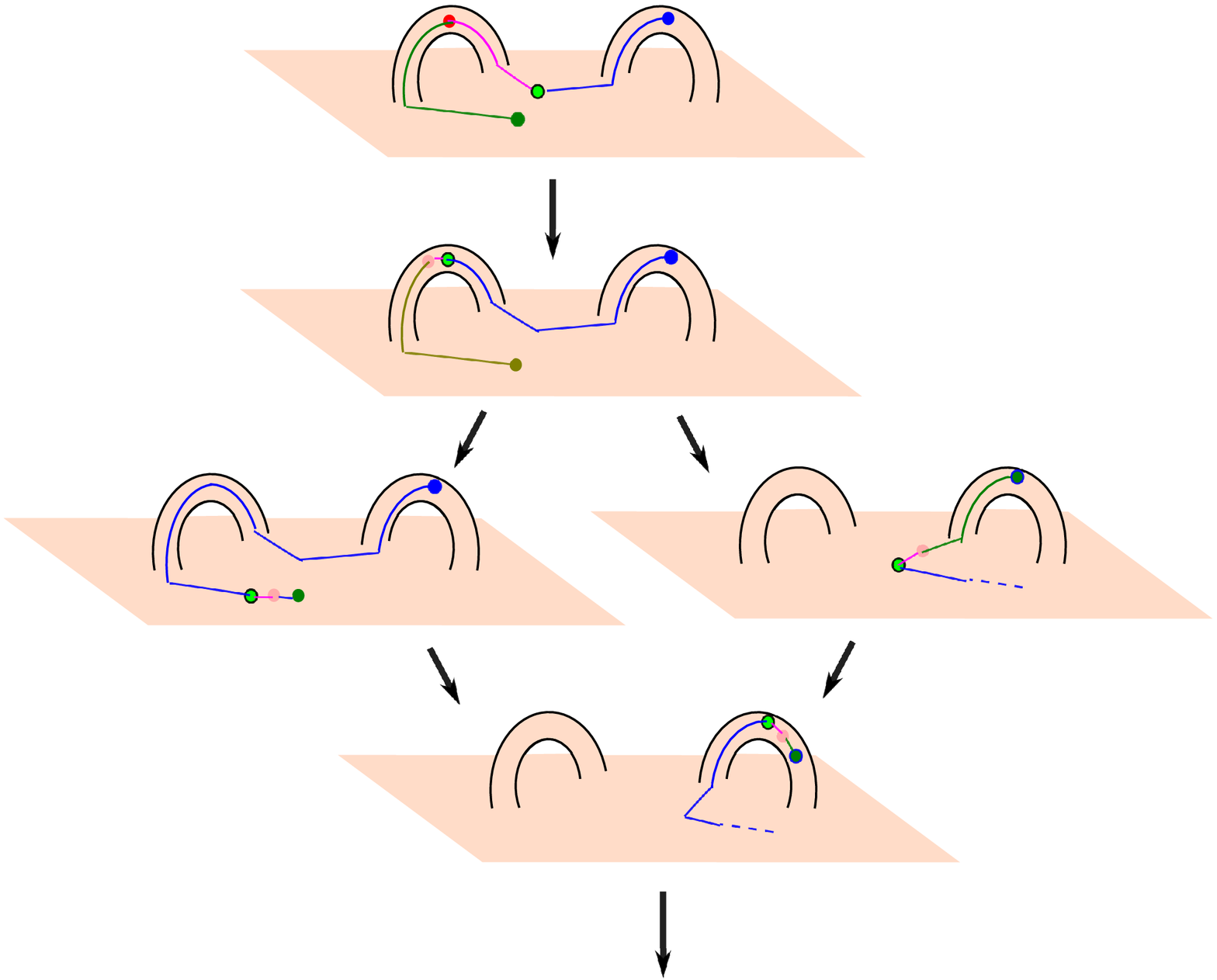}
   \caption{} \label{fig:thetaprod}
    \end{figure}

The arcs $\gamma_1$ and $\psi_1^{-1}(\gamma_2)$ have a common endpoint at $\psi_1^{-1}(p)$ so $\gamma_3 = \gamma_1 \cup \psi_1^{-1}(\gamma_2)$ is an arc with end points at $p$ and ${\psi_1^{-1}\psi_2^{-1}(p)}$.  Then by Lemma \ref{lemma:gammapq} (2) becomes
$$ (\psi_2\;\psi_1)_{\psi_1^{-1}\psi_2^{-1}(p)}\;[\gamma_3]_p^{\psi_1^{-1}\psi_2^{-1}(p)} \in \calP(p).$$
This expression in turn represents $\Theta_p(\psi_2\psi_1)$, {\em so long as $\gamma_3 \subset T_2 - \kappa_f$}.  This is true if and only if $\psi_1^{-1}(\gamma_2) \subset T_2 - \kappa_f$, that is $\gamma_2 \subset T_2 - \psi_1(\kappa_f)$.  This is easily arranged since, by Lemma \ref{lemma:Kdef}, $T_2 - (\kappa_f \cup  \psi_1(\kappa_f))$ is connected.
\end{proof}

\begin{cor} \label{cor:Kpexist}  $\Theta_p^{-1}(\calP(p))$ is a subgroup $K(p) \subset \Dih_6$. \qed
\end{cor}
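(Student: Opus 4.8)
The plan is to recognize that $K(p):=\Theta_p^{-1}(\calP(p))$ is a nonempty submonoid of the \emph{finite} group $\Dih_6$, so that being a subgroup is then automatic. Concretely, I would verify three things: (i) $\mathrm{id}\in K(p)$; (ii) $K(p)$ is closed under composition; (iii) $K(p)$ is closed under inverses.

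For (i), note that $p$ is a fixed point of the identity map, so Lemma \ref{lemma:fixedp} gives $\Theta_p(\mathrm{id})=\calP(p)\,\mathrm{id}\,\calP(p)=\calP(p)$; hence $\mathrm{id}\in K(p)$ and in particular $K(p)\neq\emptyset$. For (ii), the content is exactly Lemma \ref{lemma:thetaprod}: if $\Theta_p(\psi_1)=\calP(p)$ and $\Theta_p(\psi_2)=\calP(p)$, then $\Theta_p(\psi_2\psi_1)=\calP(p)$, i.e. $\psi_1,\psi_2\in K(p)$ implies $\psi_2\psi_1\in K(p)$. For (iii), since $\Dih_6$ is finite each $\psi\in K(p)$ has a finite order $n$, so $\psi^{-1}=\psi^{\,n-1}$; iterating (ii) shows $\psi^{\,n-1}\in K(p)$, hence $\psi^{-1}\in K(p)$. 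Putting (i)--(iii) together, $K(p)$ is a subgroup of $\Dih_6$.

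There is essentially no obstacle here: all of the geometric and topological work has already been packaged into Proposition \ref{prop:G/P} (which makes $\Theta_p$ well defined) and Lemma \ref{lemma:thetaprod} (which supplies multiplicativity on the fibre over the trivial double coset), so this statement is purely formal. The only point worth flagging explicitly is the appeal to finiteness of $\Dih_6$ in step (iii): without it, closure under inverses would not follow from closure under products, and one would instead have to re-run the commutation arguments of Lemmas \ref{lemma:gammapq} and \ref{lemma:homeocommute} on the reversed arc $\overline{\gamma}$ and the homeomorphism $\psi^{-1}$ to produce a representative of $\Theta_p(\psi^{-1})$ lying in $\calP(p)$. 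Since we are in $\Dih_6$, this detour is unnecessary.
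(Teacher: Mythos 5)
Your proof is correct and is essentially the argument the paper intends: the Corollary is stated with an immediate \qed because Lemma \ref{lemma:thetaprod} gives closure under products, and a nonempty subset of a finite group closed under the operation is automatically a subgroup. Your explicit verification of the identity via Lemma \ref{lemma:fixedp} and of inverses via finite order just spells out what the paper leaves implicit.
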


Suppose $p, q \in T_2 - \kappa$ and the arc $\alpha \subset T_2 - \kappa_s$ has end points at $p$ and $q$.  Then, since $\alpha$ is disjoint from $\kappa_s$, $\hat{h} = \alpha^q_p h: (S^3, T_g) \to (S^3, T(q))$ defines a semi-standard structure on $T(q)$, one in which $(a, b)$ is still the associated standard pair.  We have:

\begin{lemma}  \label{lemma:Kq} For the semi-standard structure on $T(q)$ given by  $\hat{h}$, $$\calP(q) = \alpha_p^q\calP(p)\alpha_q^p.$$  
\end{lemma}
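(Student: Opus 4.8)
The plan is to unwind both sides via the definitions and then invoke the functorial ``commuting'' property of the $\gamma^q_p$ construction, Lemma \ref{lemma:homeocommute}, together with the compatibility of $\hat h$ with $h$ along $\alpha$. Recall $\calP(p)$ is by definition $h\,\calP_g\,h^{-1}$ and $\calP(q) = \hat h\,\calP_g\,\hat h^{-1}$, where $\hat h = \alpha^q_p\, h$. So the assertion $\calP(q) = \alpha^q_p\,\calP(p)\,\alpha^p_q$ is literally the statement
\[
\hat h\,\calP_g\,\hat h^{-1} = \alpha^q_p\,(h\,\calP_g\,h^{-1})\,\alpha^p_q,
\]
which, after substituting $\hat h = \alpha^q_p\,h$ on the left, reads $\alpha^q_p\, h\,\calP_g\,h^{-1}\,(\alpha^q_p)^{-1}$ on both sides --- provided one knows $(\alpha^q_p)^{-1} \sim \alpha^p_q$. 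That last point is exactly Lemma \ref{lemma:gammapq}(2), which gives $\alpha^p_q\,\alpha^q_p \sim \mathrm{id}$, i.e.\ $\alpha^p_q$ is an inverse of $\alpha^q_p$ up to isotopy of pairs.

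So the key steps, in order, are: (1) expand $\calP(q)$ using Definition \ref{defin:semistand} applied to the homeomorphism $\hat h$, noting that $\hat h$ is a legitimate semi-standard structure precisely because $\alpha$ avoids $\kappa_s$ so that $\alpha^q_p$ carries the standard pair $(a,b)$ for $T(p)$ to the same labeled pair $(a,b)$ in $T(q)$ (this is the observation stated just before the Lemma); (2) substitute $\hat h = \alpha^q_p\, h$; (3) expand $\calP(p) = h\,\calP_g\,h^{-1}$; (4) invoke Lemma \ref{lemma:gammapq}(2) to replace $(\alpha^q_p)^{-1}$ by $\alpha^p_q$ up to isotopy of pairs; and (5) observe that conjugation by an ambient isotopy of pairs does not change the isotopy class of a homeomorphism, so both sides agree as subsets of $\calG(q)$. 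One should also remark that $\calG(q) = \alpha^q_p\,\calG(p)\,\alpha^p_q$ by the identical argument with $\calG_g$ in place of $\calP_g$, so the equality of subgroups is taking place inside a common ambient group, as it must.

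\textbf{Main obstacle.} The only real subtlety is bookkeeping about isotopy classes versus actual maps: the maps $\alpha^q_p$ are only well-defined up to isotopy of pairs (Lemma \ref{lemma:gammapq}(1)), so strictly speaking one is asserting an equality of cosets/conjugates in the mapping class group $\calG(q)$, not an equality of maps on the nose. I would handle this by working throughout at the level of isotopy classes: fix representatives, note that all the relations used (associativity of composition, $\alpha^p_q\alpha^q_p \sim \mathrm{id}$, invariance of conjugacy class under $\sim$) are statements about isotopy classes, and conclude that the two sets of isotopy classes coincide. No hard geometry is needed beyond what Lemmas \ref{lemma:gammapq} and \ref{lemma:homeocommute} already provide; the proof is essentially a formal manipulation of the definitions once those lemmas are in hand. (In fact Lemma \ref{lemma:homeocommute} is not even strictly necessary here --- it becomes essential in the subsequent comparison of the functions $\Theta_p$ and $\Theta_q$ --- since the present statement is just the ``change of basepoint conjugates the subgroup'' tautology, but I would mention it to orient the reader toward how this Lemma feeds into the next one.)
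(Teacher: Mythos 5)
Your proof is correct and follows essentially the same route as the paper: the paper's own argument is exactly the one-line unwinding $\psi \in \calP(q) \iff \hat{h}^{-1}\psi\hat{h} = h^{-1}\alpha^p_q\psi\alpha^q_p h \in \calP_g \iff \alpha^p_q\psi\alpha^q_p \in \calP(p)$, using $\hat{h} = \alpha^q_p h$ and Lemma \ref{lemma:gammapq}(2). Your additional remarks about isotopy-class bookkeeping and the (correctly noted) non-necessity of Lemma \ref{lemma:homeocommute} are accurate but not part of the paper's proof.
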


\begin{proof}  For $\psi: (S^3, T(q)) \to (S^3, T(q))$
 \[\psi \in \calP(q) \iff \hat{h}^{-1}\psi \hat{h} = h^{-1}\alpha^p_q \psi \alpha^q_p h \in \calP_g \iff \alpha^p_q \psi \alpha^q_p \in \calP(p).\]
\end{proof}

We can go further, and define $\Theta_q: \Dih_6 \to \calP(q) \bs \calG(q)/\calP(q)$ as in Definition \ref{defin:thetap}, but there is a technical issue in defining $\Theta_q$ in a  way that is naturally related to $\Theta_p$:  For each $\psi \in \Dih_6$, defining $\Theta_q(\psi)$ requires a path in $T_2$ with end points at $q$ and $\psi^{-1}(q)$ that is disjoint from $\kappa_f$.   $\Theta_p(\psi)$ is defined via a path $\gamma$ in $T_2 - \kappa_f$ with end points at $p$ and $\psi^{-1}(p)$, so, in defining $\Theta_q(\psi)$ a natural choice would be to use the path $\psi^{-1}(\alpha)_{\psi^{-1}(p)}^{\psi^{-1}(q)} \gamma_p^{\psi^{-1}(p)} \alpha_q^p$, which has end points at $q$ and $\psi^{-1}(q)$.  

Here is some of what we know about the three arcs used in this definition:
\begin{itemize}
\item Per Definition \ref{defin:thetap} $\gamma$ is disjoint from $\kappa_f$
\item Per the preliminary remarks to Lemma \ref{lemma:Kq}, $\alpha$ is disjoint from $\kappa_s$
\end{itemize}

To use the arc $\psi^{-1}(\alpha)_{\psi^{-1}(p)}^{\psi^{-1}(q)} \gamma_p^{\psi^{-1}(p)} \alpha_q^p$ in defining $\Theta_q(\psi)$ we need, per Definition \ref{defin:thetap}, the arc to be disjoint from $\kappa_f$.  Of the three segments involved, this is automatic for $\gamma$.  To arrange it also for $\alpha$ we need only add that requirement, that is, choose $\alpha$ to be disjoint from both $\kappa_s$ and $\kappa_f$.  This is possible per Lemma \ref{lemma:Kdef}(4) (or simple observation).  But once this is done, we do not know that the third segment, namely $\psi^{-1}(\alpha)$, is disjoint from $\kappa_f$; for this we would need to add a third requirement to $\alpha$, namely that it is also disjoint from $\psi(\kappa_f)$ and, depending on $\psi$, adding this third requirement may not be possible, see Figure \ref{fig:leftPnec}.

\begin{figure}[ht!]
  \labellist
\small\hair 2pt
%\pinlabel  $\bbb_-$ at 155 10
%\pinlabel  $x$ at 250 80
\pinlabel  $\aaa$ at 155 80
\pinlabel  $\psi(\kappa_f)$ at 115 48
\pinlabel  $\kappa_s$ at 30 76
\pinlabel  $\kappa_f$ at 245 76
\endlabellist
    \centering
    \includegraphics[scale=0.7]{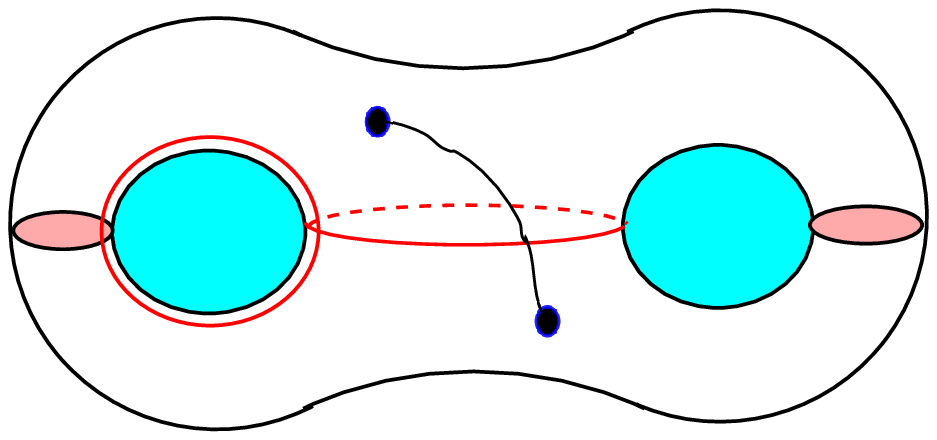}
   \caption{} \label{fig:leftPnec}
    \end{figure}

One approach to fixing this problem (an approach that requires retreating to the use of double cosets) is to replace  $\psi^{-1}(\alpha)$ with yet a third arc, say $\beta$, with the same end points, chosen to be disjoint from $\kappa_f$.  Moreover it will be useful if $\beta$ is also disjoint from $\psi^{-1}(\kappa_f)$, as we can do per  Lemma \ref{lemma:Kdef}(4).  

Using the arc $\beta_{\psi^{-1}(p)}^{\psi^{-1}(q)} \gamma_p^{\psi^{-1}(p)} \alpha_q^p$ to define  $\Theta_q: \Dih_6 \to \calP(q) \bs\calG(q)/\calP(q)$ and applying Lemma \ref{lemma:homeocommute}, we have %for $\Theta_q(\psi)$ 
\[ \Theta_q(\psi)=\calP(q)\psi_{\psi^{-1}(q)}  \beta_{\psi^{-1}(p)}^{\psi^{-1}(q)} \gamma_p^{\psi^{-1}(p)} \alpha_q^p\calP(q)= \calP(q)[\psi(\beta)]_p^q \psi_{\psi^{-1}(p)}  \gamma_p^{\psi^{-1}(p)} \alpha_q^p\calP(q).\] %= [\psi(\beta)]_p^q \Theta_p(\psi) \alpha_q^p
%where the second equality comes from Lemma \ref{lemma:homeocommute}. 
%The last term can be written $ \alpha_p^q \alpha_q^p [\psi(\beta)]_p^q \Theta_p(\psi) \alpha_q^p$. Observe that the composition $\alpha_q^p[\psi(\beta)]_p^q: T(p) \to T(p)$ contained therein is a bubble move around a path $\psi(\beta) \cup \alpha$ that is disjoint from $\kappa_f$, so per Corollary \ref{cor:eyeglass3o} and the proof of Proposition \ref{prop:G/P}, it belongs to $\calP(p)$.  Thus we have

The last term can be written $\calP(q)[\psi(\beta)]_p^q(\alpha_q^p \alpha_p^q)\psi_{\psi^{-1}(p)}  \gamma_p^{\psi^{-1}(p)} \alpha_q^p\calP(q)$.  Observe that the pair $[\psi(\beta)]_p^q\alpha_q^p: T(q) \to T(q)$ is a bubble move around a path $\psi(\beta) \cup \alpha$ that is disjoint from $\kappa_f$, so per Corollary \ref{cor:eyeglass3o} and the proof of Proposition \ref{prop:G/P}, it belongs to $\calP(q)$, and so can be absorbed into $\calP(q)$ on the left.  Thus, using Lemma \ref{lemma:Kq}, this can also be written 
\[\calP(q)\alpha_p^q\psi_{\psi^{-1}(p)}  \gamma_p^{\psi^{-1}(p)} \alpha_q^p\calP(q) = \alpha_p^q\calP(p)\psi_{\psi^{-1}(p)}  \gamma_p^{\psi^{-1}(p)}\calP(p) \alpha_q^p = \alpha_p^q\Theta_p(\psi) \alpha_q^p. \]
Summing up, we have the simple relation:

\begin{cor} \label{cor:PpPq} For, $p, q \in T_2 - \kappa$, $\Theta_q(\psi) = \alpha_p^q \Theta_p(\psi) \alpha_q^p$ so in particular 
$\Theta_q(\psi) \in \calP(q) \iff \Theta_p(\psi) \in \calP(p)$ and $K(p) = K(q) \subset \Dih_6$.  \qed
\end{cor}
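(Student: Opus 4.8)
The plan is to assemble the identity $\Theta_q(\psi)=\alpha_p^q\,\Theta_p(\psi)\,\alpha_q^p$ from the computation carried out just above and then read off the two ``in particular'' assertions. First I would fix, once and for all, an arc $\alpha\subset T_2$ from $p$ to $q$ disjoint from $\kappa_s\cup\kappa_f$, which is possible by Lemma \ref{lemma:Kdef}(4), and use $\hat h=\alpha_p^q h$ as the semi-standard structure on $T(q)$; it has the same standard pair $(a,b)$, so Lemma \ref{lemma:Kq} gives $\calP(q)=\alpha_p^q\calP(p)\alpha_q^p$, while conjugation by $\alpha_p^q$ identifies the mapping class group $\calG(p)$ with $\calG(q)$ and hence induces a bijection $\calP(p)\bs\calG(p)/\calP(p)\to\calP(q)\bs\calG(q)/\calP(q)$ carrying the trivial class to the trivial class. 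Granting the displayed identity, $\Theta_q(\psi)\in\calP(q)\iff\Theta_p(\psi)\in\calP(p)$, and therefore $K(p)=K(q)$, are immediate.

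It thus remains to prove the identity. To evaluate $\Theta_q(\psi)$ per Definition \ref{defin:thetap} I need a path in $T_2-\kappa_f$ from $q$ to $\psi^{-1}(q)$, and I would take the concatenation $\beta_{\psi^{-1}(p)}^{\psi^{-1}(q)}\,\gamma_p^{\psi^{-1}(p)}\,\alpha_q^p$, where $\gamma\subset T_2-\kappa_f$ is the path from $p$ to $\psi^{-1}(p)$ used to define $\Theta_p(\psi)$ and $\beta$ is an arc from $\psi^{-1}(q)$ to $\psi^{-1}(p)$ chosen disjoint from both $\kappa_f$ and $\psi^{-1}(\kappa_f)$ --- possible by Lemma \ref{lemma:Kdef}(4), since $T_2-(\kappa_f\cup\psi^{-1}(\kappa_f))$ is connected. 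Each of the three segments misses $\kappa_f$, so this is a legal choice, giving $\Theta_q(\psi)=\calP(q)\,\psi_{\psi^{-1}(q)}\,\beta_{\psi^{-1}(p)}^{\psi^{-1}(q)}\,\gamma_p^{\psi^{-1}(p)}\,\alpha_q^p\,\calP(q)$.

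Next I would commute $\psi_{\psi^{-1}(q)}$ past the $\beta$-factor via Lemma \ref{lemma:homeocommute}, turning the first two factors into $[\psi(\beta)]_p^q\,\psi_{\psi^{-1}(p)}$, and insert $\alpha_q^p\alpha_p^q=\mathrm{id}$ just after $[\psi(\beta)]_p^q$. The resulting block $[\psi(\beta)]_p^q\alpha_q^p$ is a bubble move of $\frb_{\frc}$ around the loop $\psi(\beta)\cup\alpha$, which lies in $T_2-\kappa_f$ because $\psi(\beta)$ misses $\kappa_f$ (as $\beta$ misses $\psi^{-1}(\kappa_f)$) and $\alpha$ misses $\kappa_f$; exactly as in the proof of Proposition \ref{prop:G/P}, homotoping this loop into the spine $\kappa_s$ of the punctured torus $T_2-\kappa_f$ and applying Corollary \ref{cor:eyeglass3o} shows the block is a Powell move, hence lies in $\calP(q)$ and can be absorbed into the left $\calP(q)$. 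What remains is $\calP(q)\,\alpha_p^q\,\psi_{\psi^{-1}(p)}\,\gamma_p^{\psi^{-1}(p)}\,\alpha_q^p\,\calP(q)$, and replacing each copy of $\calP(q)$ by $\alpha_p^q\calP(p)\alpha_q^p$ collapses this to $\alpha_p^q\bigl(\calP(p)\,\psi_{\psi^{-1}(p)}\,\gamma_p^{\psi^{-1}(p)}\,\calP(p)\bigr)\alpha_q^p=\alpha_p^q\,\Theta_p(\psi)\,\alpha_q^p$, as required.

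The one genuinely delicate point is the arc bookkeeping, not any single computation: to keep every path in the definition of $\Theta_q(\psi)$ disjoint from $\kappa_f$ I had to replace the obvious choice $\psi^{-1}(\alpha)$ (which can meet $\kappa_f$, cf.\ Figure \ref{fig:leftPnec}) by the auxiliary arc $\beta$, and the correction $[\psi(\beta)]_p^q\alpha_q^p$ this introduces attaches to only one side of the coset. That is exactly why the statement must be phrased for the double cosets $\calP(\cdot)\bs\calG(\cdot)/\calP(\cdot)$: the analogous equality of single cosets $\Theta_q(\psi)=\alpha_p^q\Theta_p(\psi)$ would not survive the absorption step.
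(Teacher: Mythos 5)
Your argument is correct and is essentially identical to the paper's: the same auxiliary arc $\beta$ replacing $\psi^{-1}(\alpha)$, the same application of Lemma \ref{lemma:homeocommute}, the same insertion of $\alpha_q^p\alpha_p^q$ and absorption of the bubble move $[\psi(\beta)]_p^q\alpha_q^p$ into $\calP(q)$ via Proposition \ref{prop:G/P}, and the same use of Lemma \ref{lemma:Kq} to collapse to $\alpha_p^q\Theta_p(\psi)\alpha_q^p$. Your closing remark about why double cosets are forced is exactly the point the paper makes before stating the corollary.
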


We summarize the above discussion, and extend it slightly, in the following Propositions.  Recall that in the discussion above $p, q \in T_2 - \kappa$; below we will only require that $p, q \in T_2 - \kappa_f$, where $\kappa_f \subset \kappa$ is a specified forbidden pair of labeled meridians.  That is, in the Proposition and proof below we reprise the discussion above, but also allow the possibility that one or both of $p, q \in \kappa - \kappa_f$.

\begin{prop}. \label{prop:thetastar}  Let $\kappa_0 \subset \kappa \subset T_2$ be a pair of circles that bound an orthogonal pair of labeled meridian disks.  One can define, for each $p \in T_2 - \kappa_0$, a collection of homeomorphisms $\calH_p$ from $(S^3, T_g)$ to $(S^3, T(p))$ with the following properties:
\begin{enumerate}
\item For each $p \notin \kappa$, there is a subset $\calH_p^0 \subset \calH_p$ so that each $h^0 \in \calH_p^0$ gives a semi-standard structure on $T(p)$ in which $\kappa_0$ is the forbidden pair of labeled meridians.
\item For each $p \in T^2 - \kappa_0$ and $h_p \in \calH_p$, $\calP_{h_p} = \{\psi \subset \calG(p) | h_p^{-1} \psi h_p \in \calP_g\}$ is independent of $h_p$.  That is, for any pair $h_1, h_2 \in \calH_p$, $\calP_{h_1} = \calP_{h_2}$.  This subgroup of $\calG(p)$ is denoted $\calP(p)$.  
\item For each $p, q \in T_2 - \kappa_0$ and any arc $\gamma \subset T_2 - \kappa_0$, $\calP(q) = \gamma_p^q \calP(p) \gamma_q^p$.
\end{enumerate}
\end{prop}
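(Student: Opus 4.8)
The plan is to reduce everything to a single fixed reference frame. I would first fix a base point $p_* \in T_2 - \kappa$ and a homeomorphism $h_*:(S^3, T_g) \to (S^3, T(p_*))$ giving a semi-standard structure on $T(p_*)$ whose forbidden pair of circles is $\kappa_0$ (so its standard pair $\kappa_s$ is the orthogonal pair of labeled meridians disjoint from $\kappa_0$, and $\kappa_s \cap \kappa_0 = \emptyset$). Then for each $p \in T_2 - \kappa_0$ I would set $\calH_p$ to be the collection of all composites $\gamma^p_{p_*}\, h_*$, where $\gamma$ ranges over general position paths in $T_2 - \kappa_0$ from $p_*$ to $p$, interpreting $\gamma^p_{p_*}$ via Lemma \ref{lemma:gammapq}(5) when $\gamma$ is not embedded; and, for $p \notin \kappa$, let $\calH_p^0 \subset \calH_p$ be the subcollection of those $\gamma^p_{p_*}h_*$ with $\gamma$ also missing $\kappa_s$. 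Both collections are nonempty because $T_2 - \kappa_0$ and $T_2 - (\kappa_s \cup \kappa_0)$ are connected by Lemma \ref{lemma:Kdef}(4). This definition needs $p_* \notin \kappa$ only in order to invoke a genuine semi-standard structure at $p_*$; the constructions $T(p)$ and the pushes $\gamma^q_p$ make sense for every point of $T_2$, which is exactly why the extension to $p, q \in \kappa - \kappa_0$ costs nothing.

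Property (1) should then be immediate: if $\gamma$ misses $\kappa_s$, the push $\gamma^p_{p_*}$ can be chosen with support disjoint from the meridian disks bounded by the two circles of $\kappa_s$, so $\gamma^p_{p_*}h_*$ carries $(a_g, b_g)$ to the same labeled orthogonal pair that $h_*$ does, namely the one disjoint from $\kappa_0$; hence it gives a semi-standard structure on $T(p)$ with forbidden pair $\kappa_0$.

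The real content, and the step I expect to be the main obstacle, is Property (2). Given $h_1 = (\gamma_1)^p_{p_*}h_*$ and $h_2 = (\gamma_2)^p_{p_*}h_*$ in $\calH_p$, Lemma \ref{lemma:gammapq}(2) and (3) identify $h_1^{-1}h_2:(S^3, T_g) \to (S^3, T_g)$, up to isotopy of pairs, with $h_*^{-1}\,\mu\,h_*$, where $\mu = (\gamma_1)^{p_*}_p (\gamma_2)^p_{p_*}$ is, by Lemma \ref{lemma:gammapq}(3) and (7), a generic bubble move of $\frb_{\frc}$ around a loop based at $p_*$ lying in $T_2 - \kappa_0$. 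Because $\kappa_0$ is precisely the forbidden pair for the semi-standard structure $h_*$, the argument in the proof of Proposition \ref{prop:G/P} applies verbatim at $p_*$: $\pi_1(T_2 - \kappa_0)$ is free on the two circles of $\kappa_s$, so $\mu$ is a composition of bubble moves of $\frb_{\frc}$ around those circles, each a Powell move by Corollary \ref{cor:eyeglass3o}; hence $\mu \in \calP(p_*)$ and $h_1^{-1}h_2 = h_*^{-1}\mu h_* \in \calP_g$. With this in hand, for any $\psi \in \calG(p)$ the identity $h_2^{-1}\psi h_2 = (h_2^{-1}h_1)(h_1^{-1}\psi h_1)(h_1^{-1}h_2)$ shows $h_1^{-1}\psi h_1 \in \calP_g \iff h_2^{-1}\psi h_2 \in \calP_g$, so $\calP_{h_1} = \calP_{h_2}$; this common subgroup is $\calP(p)$, and when $p \notin \kappa$ it coincides, via any element of $\calH_p^0$, with the subgroup of Definition \ref{defin:semistand}. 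The only geometric input here is the disjointness hypothesis of Corollary \ref{cor:eyeglass3o}, arranged exactly as in Proposition \ref{prop:G/P} by keeping $\frb_{\frc}$ in general position with respect to the standard pair.

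Property (3) is then formal. Given an arc (or general position path) $\gamma \subset T_2 - \kappa_0$ from $p$ to $q$ and any $h_p = \sigma^p_{p_*}h_* \in \calH_p$, Lemma \ref{lemma:gammapq}(3) gives $\gamma^q_p h_p \sim (\sigma \cup \gamma)^q_{p_*}h_*$, and $\sigma \cup \gamma$ is a path in $T_2 - \kappa_0$ from $p_*$ to $q$, so $\gamma^q_p h_p \in \calH_q$; by Property (2), $\calP(q) = \calP_{\gamma^q_p h_p}$. Unwinding the definition of $\calP_{\gamma^q_p h_p}$ with Lemma \ref{lemma:gammapq}(2), which gives $(\gamma^q_p)^{-1} \sim \gamma^p_q$,
\[ \psi \in \calP(q) \iff h_p^{-1}\,\gamma^p_q\,\psi\,\gamma^q_p\,h_p \in \calP_g \iff \gamma^p_q\,\psi\,\gamma^q_p \in \calP_{h_p} = \calP(p) \iff \psi \in \gamma^q_p\,\calP(p)\,\gamma^p_q, \]
which is exactly $\calP(q) = \gamma^q_p\,\calP(p)\,\gamma^p_q$, i.e. $\gamma_p^q \calP(p) \gamma_q^p$ in the notation of the statement. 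In short, after the bookkeeping supplied by Lemmas \ref{lemma:gammapq} and \ref{lemma:homeocommute}, everything hinges on the single fact, inherited from Proposition \ref{prop:G/P}, that a bubble move of $\frb_{\frc}$ around a loop avoiding the forbidden pair is a Powell move.
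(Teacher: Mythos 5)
Your proposal is correct and follows essentially the same route as the paper's proof: fix a base point and semi-standard reference homeomorphism, define $\calH_p$ by pushing along paths in $T_2 - \kappa_0$, and reduce (2) to the fact (from Proposition \ref{prop:G/P}) that a bubble move of $\frb_{\frc}$ around a loop avoiding the forbidden pair is a Powell move, with (1) and (3) following by support-disjointness and formal concatenation respectively. The only differences are cosmetic elaborations (explicitly noting nonemptiness of $\calH_p$ via Lemma \ref{lemma:Kdef}(4) and the compatibility with Definition \ref{defin:semistand}).
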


\begin{proof}  Pick a fixed base point $\star \in T_2 - \kappa$ and a homeomorphism of pairs $h:(S^3, T_g) \to (S^3, T(\star))$ that gives a semi-standard structure on $T(\star)$ in which $\kappa_0$ is the forbidden pair of labeled meridians.  Given any point $p \in T_2 - \kappa_0$ define $\calH_p$ to be the set of all homeomorphisms $ \gamma_{\star}^p h : (S^3, T_g) \to (S^3, T(p))$, where $\gamma \subset T_2 - \kappa_0$ is a path with one end at $\star$ and one end at $p$.  If $p \notin \kappa$ then define $\calH^0_p$ to be the subcollection in which $\gamma \subset T_2 - (\kappa_0 \cup \kappa_s)$.  

To prove (1) observe that when $\gamma \subset T_2 - (\kappa_0 \cup \kappa_s)$ the homeomorphism $\gamma_{\star}^p$ %and its inverse $\gamma_p^{\star}$ are
is the identity on $(\kappa_0 \cup \kappa_s)$, so $ \gamma_{\star}^p h $ and $h$ coincide over $(\kappa_0 \cup \kappa_s)$.  It follows from Definition \ref{defin:semistand} that $h$ gives a semi-standard structure on $T(\star)$ in which $\kappa_0$ is the forbidden pair of labeled meridians, if and only if the same is true for $T(p)$ and $ \gamma_{\star}^p h $.

To prove (2) choose arcs $\alpha, \beta \subset T_2 - \kappa_0$ so that $h_1 = \alpha_{\star}^p h$ and $h_2 = \beta_{\star}^p h$.  %Then $h_2^{-1}h_1 = h^{-1}\beta_p^{\star}\alpha_{\star}^p h$.  
As shown in the proof of Proposition \ref{prop:G/P}, $\beta_p^{\star}\alpha_{\star}^p \in \calP(\star)$ so, per Definition \ref{defin:semistand}
$h_2^{-1}h_1 = h^{-1}\beta_p^{\star}\alpha_{\star}^p h \in \calP_g$.  We then have
%Then $h_1 h_2^{-1} = \alpha^p_{\star} \beta_p^{\star}$ and, as shown in the proof of Proposition \ref{prop:G/P}, it follows that $h_1 h_2^{-1} \in \calP(p)$, that is $h^{-1}h_1 h_2^{-1} h \in \calP_g$.  
\[ %\psi \in \calP_{h_1} \iff 
h_1^{-1} \psi h_1 \in \calP_g \iff  (h_2^{-1}h_1) h_1^{-1} \psi h_1 (h_1^{-1} h_2) = h_2^{-1} \psi h_2 \in \calP_g 
%\iff  \psi \in \calP_{h_2}
\]
It follows that $\psi \in \calP_{h_1} \iff \psi \in \calP_{h_2}$, as required.  

For (3), choose arcs $\alpha, \beta \subset T_2 - \kappa_0$ so that $h_{\alpha}= \alpha_{\star}^p h \in \calH_p$ and $h_{\beta} = \beta_{\star}^q h \in \calH_q$. Then the curve $\alpha \cup \gamma \subset T_2 - \kappa_0$ has ends on $q$ and $\star$ hence $(\alpha \cup \gamma)_{\star}^q h \in \calH_q$.  On the other hand, Lemma \ref{lemma:gammapq} says $(\alpha \cup \gamma)_{\star}^q \sim \gamma_p^q \alpha_{\star}^p$.   Thus 

\[\calP(q) = (\alpha \cup \gamma)_{\star}^q h \calP_g h^{-1} (\alpha \cup \gamma)^{\star}_q=  \gamma_p^q \alpha_{\star}^p h \calP_g   h^{-1} \alpha^{\star}_p \gamma_q^p  = \gamma_p^q \calP(p) \gamma_q^p\] as required.
\end{proof}

\begin{prop} \label{prop:dihedral}  Let $\kappa_0 \subset \kappa \subset T_2$ be a pair of circles that bound an orthogonal pair of labeled meridian disks.  One can define, for each $p \in T_2 - \kappa_0$, a function $\Theta_p: \Dih_6 \to \calP(p) \bs \calG(p)/\calP(p)$ with the following properties:
\begin{enumerate}
\item  When $p \notin \kappa$, the definition coincides with that in Definition \ref{defin:thetap}.
\item  For any $p, q \in T_2 - \kappa_0$ and path $\alpha \subset T^2 - \kappa_0$ with end points at $p$ and $q$, $\Theta_q(\psi) = \alpha_p^q \Theta_p(\psi) \alpha_q^p$ so in particular 
$\Theta_q(\psi) = \calP(q) \iff \Theta_p(\psi) = \calP(p)$.
\item  Suppose $\Theta_p (\psi_1) = \calP(p)$ and $\Theta_p (\psi_2) = \calP(p)$.  Then $\Theta_p (\psi_2 \psi_1) = \calP(p)$.
\item Suppose $\psi \in \Dih_6$ has a fixed point $p \notin \kappa_0$.  Then $\Theta_p(\psi) = \calP(p)\psi_p \calP(p)$.   
\end{enumerate}
\end{prop}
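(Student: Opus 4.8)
\emph{Proof proposal.} The plan is to take the formula of Definition \ref{defin:thetap} as the definition of $\Theta_p$, but with all arcs drawn in $T_2-\kappa_0$ rather than in $T_2-\kappa$, and then to verify (1)--(4) by replaying, in this slightly larger setting, the arguments already given for Proposition \ref{prop:G/P}, Corollary \ref{cor:PpPq}, Lemma \ref{lemma:thetaprod} and Lemma \ref{lemma:fixedp}, now founded on Proposition \ref{prop:thetastar} in place of the earlier statements that were specific to semi-standard structures. Concretely, for $p\in T_2-\kappa_0$ and $\psi\in\Dih_6$ I would choose an arc $\gamma\subset T_2-\kappa_0$ with endpoints $p$ and $\psi^{-1}(p)$ and interior in $T_2-\kappa_0$ (such arcs exist since $T_2-\kappa_0$ is a connected punctured torus), and set $\Theta_p(\psi)=\calP(p)\,\psi_{\psi^{-1}(p)}\,\gamma_p^{\psi^{-1}(p)}\,\calP(p)$, where $\calP(p)$ is the subgroup supplied by Proposition \ref{prop:thetastar}(2).

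First I would check that $\Theta_p$ is well-defined, i.e.\ independent of $\gamma$; this, together with the observations that for $p\notin\kappa$ the formula is literally that of Definition \ref{defin:thetap} and that the group $\calP(p)$ of Proposition \ref{prop:thetastar}(2) agrees with the one of Definition \ref{defin:semistand} (because $\calH_p^0\subset\calH_p$), yields (1). Given two admissible arcs $\gamma_1,\gamma_2$, the elements $\psi_{\psi^{-1}(p)}(\gamma_i)_p^{\psi^{-1}(p)}$ differ by right multiplication by $((\gamma_2)_p^{\psi^{-1}(p)})^{-1}(\gamma_1)_p^{\psi^{-1}(p)}\sim(\gamma_2)_{\psi^{-1}(p)}^{p}(\gamma_1)_p^{\psi^{-1}(p)}$, which by Lemma \ref{lemma:gammapq}(2,3,7) is a bubble move of $\frb_{\frc}$ around a loop in $T_2-\kappa_0$. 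Exactly as in the proof of Proposition \ref{prop:G/P}, $T_2-\kappa_0$ is a punctured torus whose fundamental group is free on loops around the two circles of the standard pair $\kappa_s$ of labeled meridians, so this bubble move is a composition of bubble moves each of which is a Powell move by Corollary \ref{cor:eyeglass3o}; hence it lies in $\calP(p)$ and the two choices of $\gamma$ give the same right, a fortiori the same double, coset. This argument uses only the topology of $T_2-\kappa_0$, never the location of $p$ relative to $\kappa_s$, so it is valid for every $p\in T_2-\kappa_0$.

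Properties (3) and (4) then require essentially no new work. For (4), if $\psi(p)=p$ with $p\notin\kappa_0$, take $\gamma$ constant as in Lemma \ref{lemma:fixedp} to obtain $\Theta_p(\psi)=\calP(p)\,\psi_p\,\calP(p)$. For (3), the proof of Lemma \ref{lemma:thetaprod} goes through verbatim: it combines the identity of Lemma \ref{lemma:homeocommute} with the fact, from Lemma \ref{lemma:Kdef}(4), that $T_2-(\kappa_0\cup\psi_1(\kappa_0))$ is connected, so that the composite arc $\gamma_3$ used there can be taken disjoint from $\kappa_0$ --- and again that proof never used $p\notin\kappa$. Property (2) is the extension of Corollary \ref{cor:PpPq}: given an arc $\alpha\subset T_2-\kappa_0$ from $p$ to $q$, I would reprise the computation immediately preceding Corollary \ref{cor:PpPq}, choosing an arc $\beta\subset T_2-\kappa_0$ from $\psi^{-1}(p)$ to $\psi^{-1}(q)$ which by Lemma \ref{lemma:Kdef}(4) can also be taken disjoint from $\psi^{-1}(\kappa_0)$; using the concatenation of $\alpha$, $\gamma$, $\beta$ to compute $\Theta_q(\psi)$, sliding $\psi$ past $\beta$ via Lemma \ref{lemma:homeocommute}, recognizing the resulting $[\psi(\beta)]_p^q\,\alpha_q^p$ as a bubble move around $\psi(\beta)\cup\alpha\subset T_2-\kappa_0$ (hence in $\calP(q)$ by Corollary \ref{cor:eyeglass3o}) that can be absorbed into $\calP(q)$ on the left, and finally invoking Proposition \ref{prop:thetastar}(3) in the form $\calP(q)=\alpha_p^q\,\calP(p)\,\alpha_q^p$, one arrives at $\Theta_q(\psi)=\alpha_p^q\,\Theta_p(\psi)\,\alpha_q^p$. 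Since $\alpha_p^q$ and $\alpha_q^p$ are mutually inverse homeomorphisms of pairs, this gives $\Theta_q(\psi)=\calP(q)$ if and only if $\Theta_p(\psi)=\calP(p)$.

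The only genuinely delicate point is the arc bookkeeping in (2): one must produce $\alpha$, $\gamma$, $\beta$ and the translate $\psi(\beta)$ all missing $\kappa_0$, and it is precisely the impossibility of doing this while also insisting on the naive choice $\beta=\psi^{-1}(\alpha)$ --- the obstruction pictured in Figure \ref{fig:leftPnec} --- that forces the conclusion to be phrased with double cosets rather than with honest cosets. A secondary, purely cosmetic point is that Proposition \ref{prop:G/P} and Lemma \ref{lemma:thetaprod} were stated under the hypothesis $p\notin\kappa$; one records, as above, that each of their proofs uses only the topology of $T_2-\kappa_0$ and the permutation action of $\Dih_6$ on the labeled meridians (Lemma \ref{lemma:Kdef}(1)), so the passage to $p\in\kappa_s$ is automatic, with the understanding that when $\psi^{-1}(p)$ happens to lie on $\kappa_0$ the arcs in question have interiors --- but not necessarily endpoints --- disjoint from $\kappa_0$.
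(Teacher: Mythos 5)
Your overall architecture --- define $\Theta_p$ intrinsically at every $p \in T_2 - \kappa_0$ by the formula of Definition \ref{defin:thetap} and then verify (1)--(4) directly --- differs from the paper's, which defines $\Theta_\star$ only at a base point $\star \notin \kappa$ and then \emph{defines} $\Theta_p$ for every other $p$ by transport, $\Theta_p(\psi) = \alpha_\star^p\,\Theta_\star(\psi)\,\alpha_p^\star$, so that (2)--(4) reduce to the already-proved statements at $\star$. For $p \notin \kappa$ your verifications are sound and the two definitions agree. The problem is at the new points $p \in \kappa - \kappa_0$, which are the entire point of the Proposition (they are what the alternate proof of Proposition \ref{prop:ass1} needs, since there $\frb_{\frc}$ sits at a fixed point of $\psi$ lying on $\kappa$). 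Your claim that the proofs of Proposition \ref{prop:G/P} and Lemma \ref{lemma:thetaprod} ``use only the topology of $T_2-\kappa_0$, never the location of $p$ relative to $\kappa_s$'' is not correct: those proofs bottom out in Corollary \ref{cor:eyeglass3o}, which requires the moving bubble to be \emph{disjoint} from the standard meridian and longitude and moves it around the intact circles $\bdd a$, $\bdd b$. When $p$ lies on $\kappa_s$ the bubble $\frb_{\frc}$ sits on one of those circles, $T(p)$ carries no semi-standard structure at all (Proposition \ref{prop:thetastar}(1) supplies one only for $p \notin \kappa$), and there is no standard bubble in $T(p)$ to which the Corollary could apply; so your assertion that the relevant bubble move ``lies in $\calP(p)$'' is unsupported precisely in the cases the Proposition was written to cover.

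The gap is repairable, but the repair is the paper's transport device: since $\calP(p)$ is itself defined (Proposition \ref{prop:thetastar}(2)) by conjugating back to $\star$ along an arc in $T_2 - \kappa_0$, membership of a bubble move at $p$ in $\calP(p)$ must be tested by conjugating it to a bubble move based at $\star \notin \kappa$, where the argument of Proposition \ref{prop:G/P} genuinely applies; carrying this out systematically reconstructs the paper's proof. A second, related loose end in your intrinsic definition: the arc $\gamma$ must run from $p$ to $\psi^{-1}(p)$, and when $p \in \psi(\kappa_0) \cap (\kappa - \kappa_0)$ the endpoint $\psi^{-1}(p)$ lies on $\kappa_0$, so no arc in $T_2 - \kappa_0$ exists; your parenthetical patch (interiors, but not endpoints, disjoint from $\kappa_0$) is not integrated into the well-definedness argument. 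The paper's formulation avoids this entirely because its defining arc joins $\star$ to $\psi^{-1}(\star)$, both of which lie off $\kappa$.
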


\begin{proof} As in the proof of Proposition \ref{prop:thetastar}, pick a fixed base point $\star \in T_2 - \kappa$ and a homeomorphism of pairs $h:(S^3, T_g) \to (S^3, T(\star))$ that gives a semi-standard structure on $T(\star)$ in which $\kappa_0$ is the forbidden pair of labeled meridians.   The function $\Theta_{\star}$ is given in Definition \ref{defin:thetap}.  

For $p \in T^2 - \kappa_0$, choose an arc $\alpha \subset T - \kappa_0$ with end points on $p$ and $\star$ and define $\Theta_p(\psi) =  \alpha_{\star}^p \Theta_{\star}(\psi) \alpha_p^{\star}$.  $\Theta_p(\psi)$ is well-defined, for if $\beta \subset T - \kappa_0$ is another such arc then 
\[ \beta_{\star}^p \theta_{\star}(\psi) \beta_p^{\star} = (\alpha_{\star}^p\alpha^{\star}_p)\beta_{\star}^p \theta_{\star}(\psi) \beta_p^{\star}(\alpha_{\star}^p\alpha^{\star}_p) = \alpha_{\star}^p(\alpha^{\star}_p\beta_{\star}^p) \theta_{\star}(\psi)( \beta_p^{\star}\alpha_{\star}^p)\alpha^{\star}_p = \alpha_{\star}^p \theta_{\star}(\psi)\alpha^{\star}_p\]
The last equality follows because, as argued in the proof of Proposition \ref{prop:G/P}.  the bubble move $\alpha^{\star}_p\beta_{\star}^p \in \calP(\star)$.  

 (1) follows immediately from Corollary \ref{cor:PpPq}.  

The proof of (2) is almost as easy: Suppose $\beta \subset T^2 - \kappa_0$ is a path with end points at $\star$ and $p$, so $\alpha \cup \beta$ has ends at $\star$ and $q$.  Then 
\[ \Theta_q(\psi) = (\alpha \cup \beta)_{\star}^q \Theta_{\star}(\psi) (\alpha \cup \beta)^{\star}_q  =    \alpha_p^q \beta_{\star}^p \Theta_{\star}(\psi) \beta^{\star}_p \alpha_q^p  = \alpha_p^q \Theta_p(\psi) \alpha_q^p \]

Observe that (3) is true for $\Theta_{\star}$ by Lemma \ref{lemma:thetaprod}.  Suppose then that $p \in T_2 - \kappa_0$ and $\Theta_p(\psi_1) = \Theta_p(\psi_2) = \calP(p)$.  Then by (2)
for $i = 1, 2$, $\Theta_{\star}(\psi_i) = \calP(\star)$.  
\[\Theta_p(\psi_2\psi_1) =  \alpha_{\star}^p \Theta_{\star}(\psi_2 \psi_1) \alpha^{\star}_p = \alpha_{\star}^p \calP(\star) \alpha^{\star}_p  = \calP(p),\]
as required.  (The last equality from Proposition \ref{prop:thetastar}(3).)

Claim (4) is true for $p \notin \kappa$ by Lemma \ref{lemma:fixedp}, but our argument will extend to the case $p \in \kappa - \kappa_0$.  Choose $\alpha$ above so that it disjoint not only from $\kappa_0$ but also from $\psi(\kappa_0)$.  This is possible by Lemma \ref{lemma:Kdef}(4).  Since $p$ is a fixed point of $\psi$,  $\gamma = \alpha \cup \psi^{-1}(\alpha)$ is an arc whose endpoints are $\star$ and $\psi^{-1}(\star)$.  Moreover, by choice of $\alpha$, $\gamma$ is disjoint from $\kappa_0$.  Then according to Definition \ref{defin:thetap} 
\[\Theta_{\star}(\psi) = \calP(\star) \psi_{\psi^{-1}(\star)} \gamma_{\star}^{\psi^{-1}(\star)} \calP(\star) = \calP(\star) \psi_{\psi^{-1}(\star)} [\psi^{-1}(\alpha)]_p^{\psi^{-1}(\star)} \alpha_{\star}^p \calP(\star) \]
and so, by  Lemma \ref{lemma:homeocommute}, 
\[\Theta_{\star}(\psi) =  \calP(\star) \alpha_p^{\star} \psi_p \alpha_{\star}^p \calP(\star)  \] 

Hence from Proposition \ref{prop:thetastar}(3)
\[\Theta_p(\psi) = \alpha_{\star}^p \Theta_{\star}(\psi) \alpha_p^{\star} = \alpha_{\star}^p \calP(\star) \alpha_p^{\star} \psi_p \alpha_{\star}^p \calP(\star)  \alpha_p^{\star} = \calP(p)\psi_p \calP(p)\]
as required.
\end{proof}

\section{Two examples}

We present two examples, each involving a stabilizer in $\Dih_6$ of one of the two standard labeled meridians, so we will use the planar viewpoint, that of  Figure \ref{fig:Kdef2}.  They use the same construction, but with a different choice of positioning for the pair of labeled meridians that are standard.  For that reason, it will be helpful to refer to the labeled meridians as they appear in the planar viewpoint without reference to the original labeling.  To that end, we will use in the argument notation from Figure \ref{fig:genswitch2}, where two of the three $B$-meridians are shown on left and right respectively as $w_\ell, w_r$ and two nearly parallel copies of the boundary of the third $B$-meridian are shown as curves $\beta_\pm$.  Similarly, two of the three $A$-meridians are shown on left and right respectively as $u_\ell, u_r$ and two parallel copies of the boundary of the third $A$-meridian are shown as curves $\alpha_\pm$.  

In Figure \ref{fig:genswitch2}, the genus $g-2$ bubble $\frb_{\frc}$ is placed between the curves $\bbb_{\pm}$; $\bbb_+$ lies on the front half of the figure and $\bbb_-$ is shown with dashes, for it lies on the back face.  The vertical arc $\lambda_+$ has ends at $\aaa_+$ to $\bbb_+$ and symmetrically  $\lambda_-$ has ends on $\aaa_-$ and $\bbb_-$.  This will be the position of $\frb_{\frc}$ in the first example; in the second, $\frb_{\frc}$ will be placed between the curves $\alpha_{\pm}$.

\begin{figure}[ht!]
  \labellist
\small\hair 2pt
%\pinlabel  $\bbb_-$ at 155 10
%\pinlabel  $x$ at 250 80
\pinlabel  $\aaa_+$ at 165 80
\pinlabel  $\aaa_-$ at 115 48
\pinlabel  $\lambda_-$ at 145 40
\pinlabel  $\frb_{\frc}$ at 140 5
\pinlabel  $\lambda_+$ at 130 90
\pinlabel  $\bbb_-$ at 10 100
\pinlabel  $\bbb_+$ at 40 90
\pinlabel  $u_\ell$ at -5 65
\pinlabel  $u_r$ at 285 65
\pinlabel  $w_\ell$ at 65 65
\pinlabel  $w_r$ at 215 65
\endlabellist
    \centering
    \includegraphics[scale=0.7]{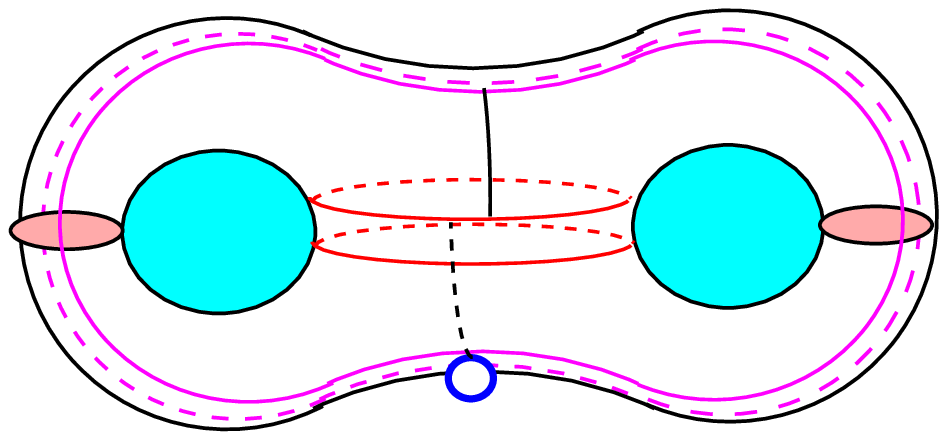}
   \caption{} \label{fig:genswitch2}
    \end{figure}

It will be useful to depict how the boundaries of the various disks appear on the $4$-punctured sphere $P \subset T_2$ that lies between the four circles $\bbb_{\pm}$ and $\aaa_{\pm}$: See the upper left panel of Figure \ref{fig:genswitch3}.  Slopes in the figure have been chosen so that $\lambda_{\pm}$ are vertical arcs (slope $\infty$) on the two sides, $\bdd w_r$ is the horizontal arc (slope $0$) at the top, and $\bdd u_{\ell}$ is the horizontal arc at the bottom.

\begin{figure}[ht!]
  \labellist
\small\hair 2pt
%\pinlabel  $\bbb_-$ at 155 10
%\pinlabel  $x$ at 250 80
\pinlabel  $\aaa_+$ at 200 300
\pinlabel  $\aaa_-$ at 380 300
\pinlabel  $\aaa_+$ at -7 300
\pinlabel  $\aaa_-$ at 170 300
\pinlabel  $\aaa_+$ at 285 110
\pinlabel  $\aaa_-$ at 100 110
\pinlabel  $\lambda_-$ at 145 250
\pinlabel  $\lambda_+$ at 10 250
\pinlabel  $\bbb_-$ at 170 190
\pinlabel  $\bbb_+$ at -5 190
\pinlabel  $\bdd w_\ell$ at 35 250
\pinlabel  $\bdd w_r$ at 80 290
\pinlabel  $\bdd u_\ell$ at 80 200
\pinlabel  $\bdd u_r$ at 90 260
\pinlabel  $\epsilon$ at 80 240
\pinlabel  $\bdd u_\ell$ at 280 250
\pinlabel  $\bdd w_r$ at 335 250
\pinlabel  $\bdd w_\ell$ at 290 290
\pinlabel  $\bdd u_r$ at 290 205
\pinlabel  $\bdd w_r$ at 150 60
\pinlabel  $\bdd w_\ell$ at 190 105
\pinlabel  $\bdd u_r$ at 190 15
\pinlabel  $\bdd u_\ell$ at 200 60
\pinlabel  $\omega$ at 300 240
\pinlabel  eyeglass$\;\phi_\epsilon$ at 190 270
\pinlabel  {flip $\phi_{\aaa}$}  at 290 150
\pinlabel  $\rho_y$ at 90 150
\endlabellist
    \centering
    \includegraphics[scale=0.8]{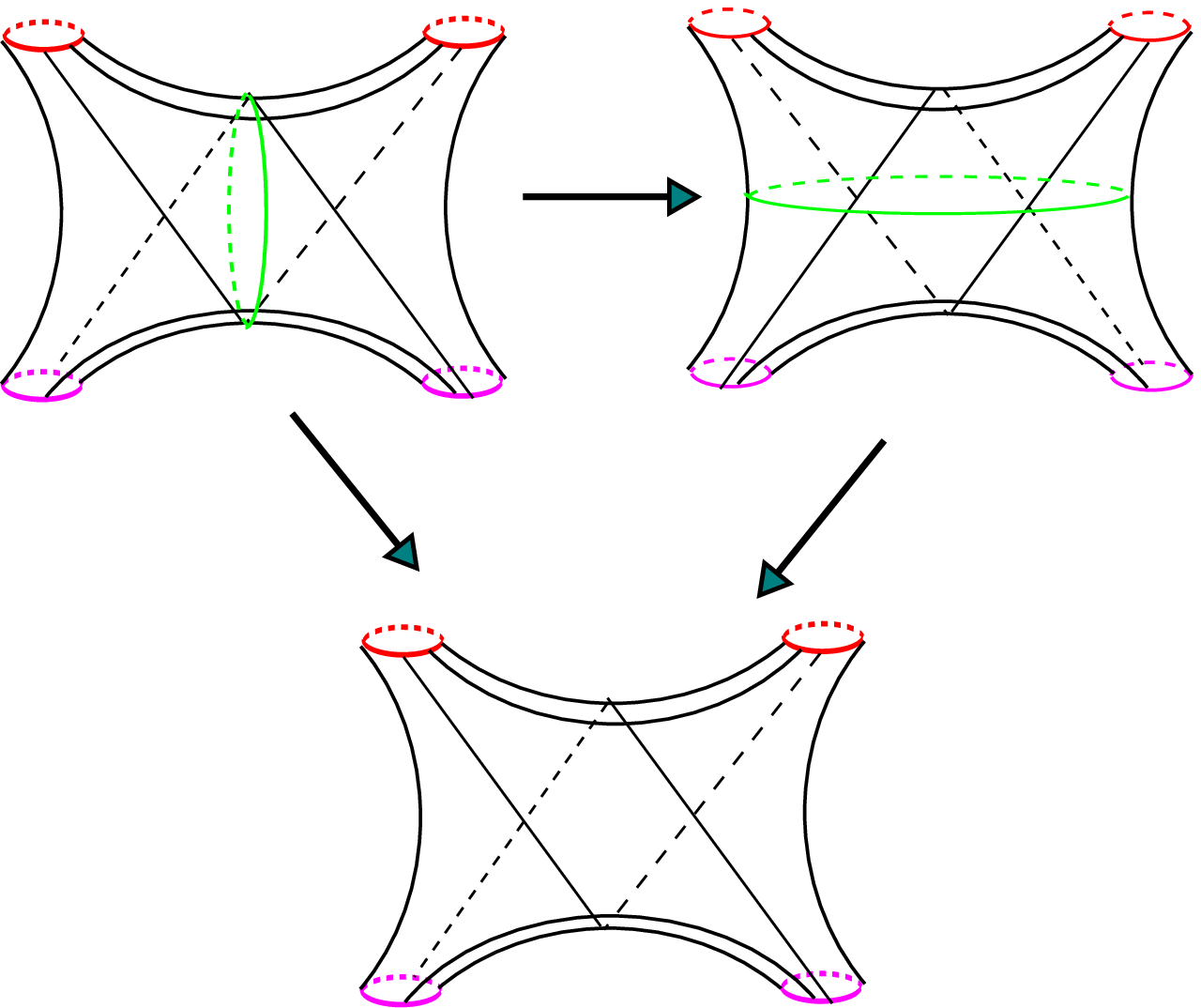}
   \caption{} \label{fig:genswitch3}
    \end{figure}

The green vertical circle $\epsilon$ in the upper left panel of figure \ref{fig:genswitch3} is parallel to the boundary of the eyeglass whose lenses are bounded by $\aaa_-, \bbb_-$ and whose bridge is $\lambda_-$.  Thus a corresponding eyeglass twist $\phi_{\eee}$ can be viewed in $P$ as a full Dehn twist around $\epsilon$; the images of the four circles $\bdd u_r, \bdd w_r, \bdd u_\ell, \bdd w_\ell$ are shown in the upper right panel.    

Also in that panel is a horizontal circle $\omega$ which, in $T_2$, cuts off a punctured torus containing $\alpha_{\pm}$ and $\bdd w_r$ (onto which $\phi_{\epsilon}$ has moved $\bdd w_\ell$).  Then $\omega$ bounds a disk in both $A$ and $B$ cutting off a genus 1 bubble, with $\alpha$ as a meridian; a flip $\phi_{\aaa}$ in this bubble is represented in $P$ by a half-twist along $\omega$ that switches $\aaa_{\pm}$.  The resulting images of the four circles $\bdd u_r, \bdd w_r, \bdd u_\ell, \bdd w_\ell$ after the eyeglass move followed by the flip is shown in the bottom panel.  %As noted in the figure, t
This is the same positioning of the four circles (as well as $\alpha$ and $\beta$) as one would obtain by the $\pi$-rotation $\rho_y$.   

(The remarks above on slope reflect these matrix observations: 
\begin{equation*}
\phi_{\epsilon} =\text{twist on }\epsilon = 
\begin{pmatrix}
1 & 2  \\
0 & 1 \\
\end{pmatrix}; \quad
\phi_\alpha = \frac12\text{ twist on }\omega = 
\begin{pmatrix}
1 & 0  \\
-1 & 1 \\
\end{pmatrix} ; \quad
\end{equation*}

and, up to sign, their composition %$\phi_{\alpha}\phi_{\epsilon}$ represented by

\begin{equation*}
\phi_{\alpha}\phi_{\epsilon}=
\begin{pmatrix}
1 & 2  \\
-1 & -1 \\
\end{pmatrix}\:
\text{permutes the vectors}\:
\begin{pmatrix}
2  \\
 -1 \\
\end{pmatrix} \: \text{and} \:
\begin{pmatrix}
0  \\
 1 \\
\end{pmatrix} .)
\end{equation*}

    \begin{prop} \label{prop:-12} Consider $\kappa$ as shown in Figure \ref{fig:Kdef2}, and $p \in T^2 - \kappa$.  If either of the labeled meridian pairs $(a_0, b_1)$ or $(a_0, b_2)$ are standard in $T(p)$ then the element $-(12) \in \Dih_6$ is in $K(p)$.      
    \end{prop}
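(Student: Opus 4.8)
The plan is to compute $\Theta_p(-(12))$ explicitly in the first-example configuration, realizing the resulting homeomorphism of $T(p)$ as a product of two Powell moves. By Figure~\ref{fig:k23}, $-(12)=\rho_y$, the $\pi$-rotation about the $y$-axis, and by Corollary~\ref{cor:PpPq} the subgroup $K(p)\subset\Dih_6$ is independent of $p\in T_2-\kappa_f$, so I may fix $p$ at will. Working in the planar viewpoint of Figure~\ref{fig:Kdef2}, let $b_\star$ be whichever of $b_1,b_2$ is the standard longitude, so $(a_0,b_\star)$ is the standard pair and $\kappa_f$ is the boundary of the orthogonal labeled pair disjoint from $a_0$ and $b_\star$. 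Using the left--right symmetry of the construction I may label so that $b_\star$ plays the role of the $B$-meridian $w_r$, that $\alpha=a_0$ is the third $A$-meridian, and that $\beta=b_0$ is the third $B$-meridian, and then place $\frb_\frc$ between the curves $\beta_\pm$ exactly as in the first example; this covers both cases at once.

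Next I would extract the factorization displayed in Figure~\ref{fig:genswitch3}. Choose a connecting arc $\gamma\subset T_2-\kappa_f$ from $p$ to $\rho_y^{-1}(p)$ lying in the annular region between $\beta_\pm$ --- possible because $\rho_y$ preserves $\beta=b_0$ and hence that region. I claim that for this $\gamma$, $(\rho_y)_\gamma=(\rho_y)_{\rho_y^{-1}(p)}\,\gamma_p^{\rho_y^{-1}(p)}$ equals $\phi_\alpha\circ\phi_\epsilon$ in $\calG(p)$, where $\phi_\epsilon$ is the eyeglass twist along the eyeglass $\{\alpha_-,\beta_-,\lambda_-\}$ and $\phi_\alpha$ is the flip of the genus one bubble that the curve $\omega$ cuts off after $\phi_\epsilon$ is performed. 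Indeed, the discussion preceding the Proposition shows that $\phi_\alpha\phi_\epsilon$ and $\rho_y$ induce the same action, up to isotopy in $T_2$, on the six circles of $\kappa$; since $\kappa$ fills $T_2$ the figure thereby exhibits an isotopy of the genus two part of $T(p)$ carrying one to the other, and this extends over $\frb_\frc$ and $S^3$ once $\gamma$ is chosen to track it (both maps may be taken to be the identity near $\frb_\frc$). Any residual orientation discrepancy is absorbed by pre- or post-composing with a flip of the standard bubble, itself a Powell move.

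It then remains to see that each factor lies in $\calP(p)$. The bubble that $\omega$ cuts off after $\phi_\epsilon$ has meridian $\alpha=a_0$ and longitude $b_\star$, so it is the standard bubble; conjugating by the semi-standard structure map $h$, a flip of it becomes a standard flip $\omega_g\in\calP_g$, whence $\phi_\alpha\in\calP(p)$. As for $\phi_\epsilon$, its eyeglass has as lenses a copy of the $a_0$-disk in $A$ and a copy of the $b_0$-disk in $B$ --- which are disjoint, as Definition~\ref{defin:eyeglass} requires, because $(a_0,b_0)$ is a disjoint labeled pair --- with bridge $\lambda_-$. Pulling everything back by $h$ and taking $c$ to be the boundary of a regular neighborhood of the meridian and longitude disks of the standard bubble, one checks that $c$ separates that one standard bubble from the remaining $g-1$, that one lens of the eyeglass lies on each side of $c$, and that the bridge $\lambda_-$ meets $c$ exactly once; Lemma~\ref{lemma:eyeglass3o} then gives $\phi_\epsilon\in\calP(p)$. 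Hence $\Theta_p(\rho_y)=\calP(p)\,\phi_\alpha\phi_\epsilon\,\calP(p)=\calP(p)$, i.e.\ $-(12)=\rho_y\in K(p)$.

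The step I expect to be the main obstacle is the claimed identity $(\rho_y)_\gamma=\phi_\alpha\phi_\epsilon$ in $\calG(p)$ --- promoting the figure-level statement that these maps reposition the curves of $\kappa$ identically to an honest equality of mapping classes of the pair. This needs: that an element of the genus two Goeritz group be detected by its action on the filling system $\kappa$ together with the choice of sides $A,B$; a careful choice of $\gamma$, kept inside the annulus between $\beta_\pm$ and hence disjoint from $\kappa_f$, so that the spurious motion of $\frb_\frc$ is accounted for; and the orientation bookkeeping. A secondary point requiring care is the verification of the separating-curve hypotheses in the application of Lemma~\ref{lemma:eyeglass3o}, together with the identification of the bubble that $\omega$ cuts off as the standard bubble.
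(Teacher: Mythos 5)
Your proposal follows essentially the same route as the paper: fix $p$ in the bicollar of $\bdd b_0$ so that the connecting arc $\gamma$ stays inside that bicollar and away from $\kappa$, identify $-(12)$ with $\rho_y$, and use the factorization $\rho_y = \phi_\alpha\phi_\epsilon$ from Figure \ref{fig:genswitch3}, with the standard-bubble flip and Lemma \ref{lemma:eyeglass3o} supplying the two Powell factors. The paper likewise treats the $(a_0,b_2)$ case by viewing the figure from the other side, and it addresses the ``figure-to-mapping-class'' step you flag simply by observing that $\psi_{\psi^{-1}(p)}\gamma_p^{\psi^{-1}(p)}$ coincides with $\rho_y$ outside the bicollar containing $\frb_{\frc}$.
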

    
    \begin{proof}   As noted in Corollary \ref{cor:PpPq}, for a different choice $q \in T^2 - \kappa$, $K(p) = K(q)$, so it suffices to prove the Proposition for a point $p$ of our choice.  Let $p$ be a point lying in a bicollar $C$ of the curve $\bdd b_0$ but not on $\bdd b_0 \subset \kappa$ itself.  We have noted above that the element $-(12) \in \Dih_6$ corresponds to rotation $\psi = \rho_y$ around the $y$-axis, that is the axis which points into the page.  This rotation preserves the orientation of $b_0$ so $\psi^{-1}(p)$ and $p$ lie on the same side of $\bdd b_0$ in bicollar $C$.  In particular there is an arc $\gamma \subset T$ with ends at $\psi^{-1}(p)$ and $p$ that lies entirely in $C - \kappa$.    Then per Definition \ref{defin:thetap}, $\Theta_p(\psi)$ is represented by $\psi_{\psi^{-1}(p)} {\gamma}_p^{\psi^{-1}(p)}:(S^3, T(p) \to (S^3, T(p)$.  But away from $\gamma$, so in particular outside $C$, which contains $\frb_{\frc}$, this homeomorphism coincides with $\psi$ itself.  
    
    Now suppose $(a_0, b_1)$ is a standard meridian pair and compare Figure \ref{fig:Kdef2} and Figure \ref{fig:genswitch2}.  The figures coincide if we take $\alpha = \bdd a_0$ and $w_r = b_1$ so $\beta = \bdd b_0$.  The argument above shows that $\rho_y$ is the composition of, first, an eyeglass twist in which the bridge of the eyeglass intersects the corresponding standard bubble boundary exactly once, and, second, a standard flip.   The latter is a Powell move on $T(p)$ (as noted just before Proposition \ref{prop:newgen}), and so is the former, by Lemma \ref{lemma:eyeglass3o}.  Hence $\psi_{\psi^{-1}(p)} {\gamma}_p^{\psi^{-1}(p)}:(S^3, T(p)) \to (S^3, T(p))$ is a Powell move, so $\Theta_p(\psi) \in \calP(p)$, that is $\psi \in K(p)$ as required.    
    
   The same argument applies if $(a_0, b_2)$ is a standard meridian pair: just view Figure \ref{fig:Kdef2} from the other side, that is with the $y$-axis pointing out of the page.  Then $w_r = b_2$ and the same argument applies.  (The $\pi$-rotation around $\rho_y$ is then in the opposite direction, but as pointed out in the description of Figure \ref{fig:k23}, both directions represent the same element in $\calG_g$.)
\end{proof}

   \begin{prop}  \label{prop:+02} Consider $\kappa$ as shown in Figure \ref{fig:Kdef2}, and $p \in T^2 - \kappa$.  If either of the labeled meridian pairs $(a_0, b_1)$ or $(a_2, b_1)$ are standard in $T(p)$ then the element $+(02) \in \Dih_6$ is in $K(p)$.      
   \end{prop}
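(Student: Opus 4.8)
The plan is to follow the proof of Proposition~\ref{prop:-12}, but with the roles of the handlebodies $A$ and $B$ interchanged, using the second of the two configurations set up above --- the one in which the genus $g-2$ bubble $\frb_{\frc}$ sits between the curves $\aaa_{\pm}$.

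By Corollary~\ref{cor:PpPq} the subgroup $K(p)\subset\Dih_6$ does not depend on $p\in T_2-\kappa$, so it is enough to verify $+(02)\in K(p)$ for one convenient point $p$. The element $+(02)$ is the $\pi$-rotation $\rho_1$ about the great circle $\frc_1$; it fixes both poles and transposes $q_0$ and $q_2$. Near $q_1$ the handlebody $A$ is a tube running along $\frc_1$, and the labeled meridian $a_1$ is a meridian disk of that tube, so $\rho_1$ carries $a_1$ to itself preserving the orientation of $\bdd a_1$. Moreover $a_1$ is exactly the forbidden $A$-meridian for each of the two semi-standard structures in the statement: it is the unique $A$-meridian disjoint from $b_1$, hence it lies in the forbidden pair whenever the standard pair is $(a_0,b_1)$ or $(a_2,b_1)$. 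I would therefore pick $p$ in a bicollar $C$ of $\bdd a_1$ with $p\notin\bdd a_1$. Since $\rho_1$ fixes $\bdd a_1$ setwise with its orientation, $\rho_1^{-1}(p)$ lies on the same side of $\bdd a_1$ as $p$ and, exactly as in the proof of Proposition~\ref{prop:-12}, there is an arc $\gamma\subset C-\kappa$ joining them. Then $\Theta_p(+(02))$ is represented by $(\rho_1)_{\rho_1^{-1}(p)}\,\gamma_p^{\rho_1^{-1}(p)}$, a homeomorphism that agrees with $\rho_1$ away from $\gamma$, in particular outside $C$ and hence away from $\frb_{\frc}$.

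Next, suppose the standard pair is $(a_0,b_1)$, and match Figure~\ref{fig:Kdef2} to Figure~\ref{fig:genswitch2} in the second configuration, taking the central $A$-meridian to be $a_1$ (so $\aaa=\bdd a_1$), one side $A$-meridian to be $u_r=a_0$, and the $B$-meridian of the standard pair to be $b_1$. Then I would carry out the computation dual to the matrix/picture computation preceding Proposition~\ref{prop:-12}, with $A$ and $B$ interchanged: now the eyeglass twist has its lenses in $B$ and $A$, and the flip $\phi_{\aaa}$ is a flip in a genus one bubble cut off by a curve $\omega$. The conclusion should be that $\rho_1$ is the composition of, first, an eyeglass twist whose bridge meets the boundary of the standard bubble exactly once and, second, a standard flip. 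A standard flip is a Powell move (noted just before Proposition~\ref{prop:newgen}), and by Lemma~\ref{lemma:eyeglass3o} so is the eyeglass twist; hence $(\rho_1)_{\rho_1^{-1}(p)}\,\gamma_p^{\rho_1^{-1}(p)}$ is a Powell move on $T(p)$, so $\Theta_p(+(02))=\calP(p)$, that is $+(02)\in K(p)$.

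If the standard pair is instead $(a_2,b_1)$, the identical argument applies after viewing Figure~\ref{fig:Kdef2} from the opposite side so that $u_r=a_2$; this is the exact analogue of the last paragraph of the proof of Proposition~\ref{prop:-12}, with the reversal of the $\pi$-rotation immaterial in $\calG_g$. The step I expect to require the most care is the dual factorization in the previous paragraph: checking that the $A\leftrightarrow B$-interchanged version of the computation before Proposition~\ref{prop:-12} really does realize $+(02)=\rho_1$ and, in particular, that the bridge of the resulting eyeglass crosses the standard bubble boundary exactly once, so that Lemma~\ref{lemma:eyeglass3o} genuinely applies. The preliminary claim that $\rho_1$ preserves the orientation of $\bdd a_1$ should be routine once the handle structure of $A$ near $q_1$ is made precise.
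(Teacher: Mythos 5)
Your proposal is correct and follows essentially the same route as the paper's proof: identify $+(02)$ with the $\pi$-rotation fixing the forbidden meridian $a_1$, place $p$ (and $\frb_{\frc}$) in a bicollar of $\bdd a_1$, and factor the rotation as an eyeglass twist whose bridge meets the standard bubble boundary once followed by a standard flip, both Powell by Lemma \ref{lemma:eyeglass3o} and the remarks before Proposition \ref{prop:newgen}. The "dual factorization" you flag as delicate is exactly what the paper does by keeping the same eyeglass twist but performing the flip on the other half of the $4$-punctured sphere $P$, so that $\alpha$ is preserved and $\beta$ reversed.
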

   
   \begin{proof}  The argument is symmetric to that of Proposition \ref{prop:-12}.  Suppose $(a_2, b_1)$ is standard and position so that in Figure \ref{fig:genswitch2} they appear as $(u_\ell, \beta)$.  Place $\frb_{\frc}$ in a bicollar of $\alpha = a_1$ (instead of $\beta = b_0$) and in the construction shown in Figure \ref{fig:genswitch3} do the flip on the bottom half of $P$ (instead of the top half), so that the orientation of $\alpha$ is preserved and that of $\beta$ is reversed.  In the planar viewpoint (Figure \ref{fig:genswitch2} with $\frb_{\frc}$ placed between $\alpha_{\pm}$ instead of $\beta_{\pm}$) this is equivalent to rotating around the $z$-axis, preserving the poles and both $a_1$ (orientation also preserved) and $b_1$ (orientation reversed).  Thus it represents $+(02)$, as required.  
   
   The case in which $(a_0, b_1)$ is standard is handled just as in the proof of Proposition \ref{prop:-12}: when viewed from the opposite side the pair appear as $(u_\ell, \beta)$ and the same argument as that for $(a_2, b_1)$ applies.  
   \end{proof}
    
 \begin{cor} \label{cor:Dih6inP} If any pair of labeled meridians are standard in $T(p)$, $K(p) = \Dih_6$.
     \end{cor}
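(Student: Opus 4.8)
The plan is to reduce the corollary to the single situation where the standard pair of labeled meridians is $(a_0, b_1)$, dispose of that situation by a short computation inside $\Dih_6$, and then transfer every other case onto it using the $\Dih_6$-symmetry of the configuration. For the base case, recall from Corollary \ref{cor:Kpexist} that $K(p)$ is a subgroup of $\Dih_6$. When $(a_0, b_1)$ is standard, Propositions \ref{prop:-12} and \ref{prop:+02} together place both $-(12)$ and $+(02)$ in $K(p)$, so it suffices to observe that these two involutions already generate $\Dih_6$. Their product is $-\tau$ for a $3$-cycle $\tau$ (any product of two distinct transpositions in $S_3$ is a $3$-cycle); since $\rho_e = -(\; )$ is central of order $2$, this product has order $6$, and the cyclic subgroup it generates consists only of elements whose permutation part is trivial or a $3$-cycle, so it does not contain the transposition-type element $-(12)$. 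Hence $\langle -(12), +(02)\rangle$ has order strictly larger than $6$, and being a subgroup of the order-$12$ group $\Dih_6$ it must be all of it. Thus $K(p) = \Dih_6$ whenever $(a_0, b_1)$ is standard (at any point $p$, by the $p$-independence of Corollary \ref{cor:PpPq}).

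For the general case I would first note that a pair of labeled meridians capable of being standard is one of the six orthogonal pairs $(a_i, b_j)$ with $i \neq j$, and that $\Dih_6$ acts transitively on these six pairs: the rotation subgroup $\langle +(012)\rangle$ has exactly the two orbits $\{(a_0,b_1),(a_1,b_2),(a_2,b_0)\}$ and $\{(a_0,b_2),(a_1,b_0),(a_2,b_1)\}$, and the reflection $+(12)$ carries $(a_0,b_1)$ to $(a_0,b_2)$, joining them. Given an arbitrary standard orthogonal pair $(a,b)$, I would pick $\psi_0 \in \Dih_6$ with $\psi_0(a) = a_0$ and $\psi_0(b) = b_1$; since $\psi_0$ is a homeomorphism that permutes the labeled meridians and preserves $\kappa$ (Lemma \ref{lemma:Kdef}), it automatically carries the forbidden pair of the $(a,b)$-semi-standard structure to that of the $(a_0,b_1)$-structure. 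Transporting the structure map $h$ across $(\psi_0)_p$ (the construction of Section \ref{sect:firstobs}) then yields a semi-standard structure on $T(\psi_0(p))$ with standard pair $(a_0,b_1)$, for which the base case gives $K = \Dih_6$. Using Lemma \ref{lemma:homeocommute}, I would check that conjugation by $(\psi_0)_p$ intertwines $\calG(p)$, $\calP(p)$, and the function $\Theta_p$ of the $(a,b)$-structure with the corresponding objects of the $(a_0,b_1)$-structure, up to the inner automorphism $\psi \mapsto \psi_0^{-1}\psi\psi_0$ of $\Dih_6$. This forces $K(p)$ for the $(a,b)$-structure to equal $\psi_0^{-1}\,\Dih_6\,\psi_0 = \Dih_6$, which completes the argument.

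The group computation of the first step is trivial and the transitivity of the second is immediate, so the only real work lies in the last verification: that transporting the semi-standard structure across $(\psi_0)_p$ genuinely intertwines the two functions $\Theta$ up to an inner automorphism of $\Dih_6$, so that $K(p)$ transforms by conjugation. This amounts to careful bookkeeping with Lemmas \ref{lemma:gammapq} and \ref{lemma:homeocommute} — tracking how the connecting arcs $\gamma$ and the forbidden pair behave under $\psi_0$, and checking that the double-coset definition of $\Theta_p$ is respected throughout — and it is the one place where care is needed.
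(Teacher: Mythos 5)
Your proposal is correct and follows essentially the same route as the paper: reduce to the case where $(a_0,b_1)$ is standard, invoke Propositions \ref{prop:-12} and \ref{prop:+02} to place $-(12)$ and $+(02)$ in $K(p)$, and check that these two elements generate $\Dih_6$. The only differences are cosmetic: you verify generation by an order count on the cyclic subgroup generated by their product (where the paper instead conjugates to produce all six transpositions), and you spell out the symmetry/conjugation bookkeeping behind the reduction that the paper compresses into a one-line ``without loss of generality.''
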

    
\begin{proof} With no loss of generality we may as well assume that the pair $(a_0, b_1)$ is standard.  Following Propositions \ref{prop:-12} and \ref{prop:+02} we need only show that the elements $-(12), +(02)$ generate $\Dih_6$ and this is easy.  For example, since $[+(02)][-(12)] [+(02)] = -(01)$ and $[-(12)][+(02)][-(1,2)] = +(01)$ are in $K(p)$, the simple transposition of the poles $-\in K(p)$.  Then all transpositions $\pm(01), \pm(02), \pm(1,2) \in K(p)$ and these clearly generate $\Dih_6$.  
\end{proof}

\begin{proof}[Alternate proof of Proposition \ref{prop:ass1}]  We will apply Corollary \ref{cor:Dih6inP} in genus $g+1$.  The exchange of the standard bubble $(a_0, b_1)$, say, and the complementary bubble $(a_1, b_0)$ can be seen as an element $\psi \in \Dih_6$ by placing $a_0 = u_{\ell}$ and $b_1=w_{\ell}$ in Figure \ref{fig:genswitch2} and rotating around the $z$-axis.  The bubble $\frb_{\frc}$ is placed, as shown, at a fixed point of $\psi$.  By Corollary \ref{cor:Dih6inP} $\Theta_p(\psi) = \calP(p)$ and then Proposition \ref{prop:dihedral}(4) implies $\psi_p \in \calP(p)$ as required.  
\end{proof}

%\printindex
\end{document}